\documentclass[reqno]{amsart}

\usepackage{a4wide}
\usepackage[english]{babel}
\usepackage[utf8]{inputenc}
\usepackage{amsmath}
\usepackage{braket}
\usepackage[hidelinks]{hyperref}
\usepackage{tikz}

\renewcommand{\Re}{\operatorname{Re}}
\newcommand{\AC}{\operatorname{AC}}
\newcommand{\erf}{\operatorname{erf}}
\newcommand{\sgn}{\operatorname{sgn}}
\newcommand{\loc}{{\operatorname{loc}}}
\newcommand{\Arg}{\operatorname{Arg}}

\newtheorem{thm}{Theorem}[section]
\newtheorem{prop}[thm]{Proposition}
\newtheorem{cor}[thm]{Corollary}
\newtheorem{lem}[thm]{Lemma}
\newtheorem{defi}[thm]{Definition}
\newtheorem{ass}[thm]{Assumption}
\newtheorem{exam}[thm]{Example}
\newtheorem{rem}[thm]{Remark}
\numberwithin{equation}{section}

\title[Oscillatory integrals and the Schrödinger equation]{On a class of oscillatory integrals and their application to the time dependent Schrödinger equation}

\author[Jussi Behrndt]{J. Behrndt}
\address{(JB) Technische Universität Graz, Institut für Angewandte Mathematik, Steyrergasse 30, 8010 Graz, Austria}
\email{behrndt@tugraz.at}

\author[Peter Schlosser]{P. Schlosser}
\address{(PS) Politecnico di Milano, Dipartimento di Matematica, Via E. Bonardi 9, 20133 Milano, Italy}
\email{pschlosser@math.tugraz.at}

\begin{document}

\begin{abstract}
In this paper a class of oscillatory integrals is interpreted as a limit of Lebesgue integrals with Gaussian regularizers. The convergence of the regularized integrals is shown with an improved version of iterative integration by parts that generates additional decaying factors and hence leads to better integrability properties. The general abstract results are then applied to the Cauchy problem for the one dimensional time dependent Schrödinger equation, where the solution is expressed for $C^n$-regular initial conditions with polynomial growth at infinity via the Green's function as an oscillatory integral.
\end{abstract}

\maketitle

\section{Introduction}

In the analysis of the Cauchy problem for the one dimensional time dependent Schrödinger equation
\begin{equation}\label{Eq_Cauchy_intro}
\begin{split}
i\frac{\partial}{\partial t}\Psi(t,x)&=\Big(-\frac{\partial^2}{\partial x^2}+V(t,x)\Big)\Psi(t,x),\qquad t>0,\,x\in\mathbb{R}, \\
\Psi(0,x)&=F(x),\hspace{3.9cm} x\in\mathbb{R},
\end{split}
\end{equation}
with a given potential $V$ and initial condition $F$, one typically wants to express the solution $\Psi$ with the help of the corresponding {\it Green's function} $G(t,x,y)$ in the form
\begin{equation}\label{Eq_G_integral_intro}
\Psi(t,x)=\int_\mathbb{R}G(t,x,y)F(y)dy.
\end{equation}
To ensure the existence of the integral \eqref{Eq_G_integral_intro} in the usual Lebesgue sense one often imposes strong smoothness and decay assumptions on the initial condition $F$. In particular, test or Schwartz functions are a natural choice to avoid technical difficulties, see for example the constructions of explicit Green's functions in \cite{C09,C93,GS86,M89,S87}. However, if $F$ does not satisfy such additional requirements, it is necessary to specify in which sense the integral \eqref{Eq_G_integral_intro} has to be understood.  

The main objective of this paper is to propose a new constructive and explicit approach towards integrals that do not converge in the usual Lebesgue sense, but where the integrand has a fast oscillatory behaviour. Our technique  is inspired by the above Cauchy problem and its application in the theory of Aharonov-Berry superoscillations (a wave phenomenon, where low frequency waves interact almost destructively in such a way that the resulting wave has a very small amplitude but an arbitrary large frequency; cf. \cite{ADV88,B94} for the physical and mathematical origin of this effect, and \cite{B19} for a comprehensive survey), but will be developed in a general independent framework. To familiarize the reader with our method let us consider \eqref{Eq_Cauchy_intro} with the initial condition 
\begin{equation}\label{Eq_Plane_wave}
F(y)=e^{i\kappa y},
\end{equation}
which is a plane wave with frequency $\kappa\in\mathbb{R}$, the most important initial condition in the above mentioned application on superoscillations. Typically, the Green's function in \eqref{Eq_G_integral_intro} does not possess any decay as $y\to\pm\infty$ and thus the integral \eqref{Eq_G_integral_intro} with $F$ in \eqref{Eq_Plane_wave} is not absolutely convergent in the Lebesgue sense. In order to still give meaning to the integral, one may use that the Green's function of the Schrödinger equation often admits a decomposition of the form
\begin{equation*}
G(t,x,y)=e^{ia(t)y^2}\widehat{G}(t,x,y),
\end{equation*}
where the functions $a$ and $\widehat{G}$ satisfy certain additional conditions. Using this decomposition turns the integral \eqref{Eq_G_integral_intro} into
\begin{equation}\label{Eq_Ghat_integral_intro}
\Psi(t,x)=\int_\mathbb{R}e^{ia(t)y^2}\widehat{G}(t,x,y)F(y)dy.
\end{equation}
Now, one may interpret \eqref{Eq_Ghat_integral_intro} as a generalized Fresnel integral by formally substituting $y\to ye^{i\alpha}$ for some $\alpha>0$. This leads to a rotation of the integration path into the complex plane and turns the oscillating term $e^{ia(t)y^2}$ into the Gaussian decaying factor $e^{ia(t)(ye^{i\alpha})^2}$, which may lead to an absolutely convergent integral. This method typically requires that the integrand extends holomorphically into the complex plane and was employed in the context of time evolution of superoscillations recently in, e.g., \cite{ABCS22,ACSS19,S22}.

In the present paper we propose a different method, based on iterative integration by parts, to interpret \eqref{Eq_Ghat_integral_intro} as a limit of absolutely convergent integrals. This avoids the analyticity assumption on the integrand and replaces it by some $C^n$-regularity. Moreover, the integrand and hence the initial condition is allowed to grow polynomially at $y\to\pm\infty$, see Theorem~\ref{thm_Psi}. In order to sketch the key idea, we note that the fundamental structure of the integral \eqref{Eq_Ghat_integral_intro} is of the form
\begin{equation}\label{Eq_Integral_f_intro}
\int_b^\infty e^{iay^2}f(y)dy,
\end{equation}
where we only consider the integral along the interval $[b,\infty)$ with $b>0$ and $a\in\mathbb R\setminus\{0\}$. This integral can be rewritten by formally using integration by parts, as
\begin{equation}\label{Eq_Integration_by_parts_intro}
\int_b^\infty e^{iay^2}f(y)dy=\frac{-1}{2ia}\bigg(e^{iab^2}\frac{f(b)}{b}-\int_b^\infty e^{iay^2}\frac{f(y)}{y^2}dy+\int_b^\infty e^{iay^2}\frac{f'(y)}{y}dy\bigg),
\end{equation}
where we have assumed that the evaluation $e^{iay^2} f(y)/y$ for $y\to\infty$ vanishes. Thus, instead of the function $f$, it suffices to integrate the functions $f(y)/y^2$ and $f'(y)/y$, where the additional decaying factors $1/y^2$ and $1/y$ lead to better integrability property at $\infty$. This observation can be made rigorous by inserting the Gaussian factor $e^{-\varepsilon y^2}$, and when \eqref{Eq_Integration_by_parts_intro} is applied iteratively, it leads to the formula \eqref{Eq_Integral_formula}. It is then shown in Theorem~\ref{thm_Integral}, that for $n$-times continuously differentiable functions $f$ where the $n$-th derivative grows at most polynomially with order $\alpha<n-1$, we can consider the oscillatory integral as the limit of regularized integrals
\begin{equation}\label{Eq_Oscillatory_integral_intro}
\int_b^\infty e^{iay^2}f(y)dy:=\lim\limits_{\varepsilon\to 0^+}\int_b^\infty e^{-\varepsilon y^2}e^{iay^2}f(y)dy.
\end{equation}

In Section~\ref{sec_Time_dependent_Schroedinger_equation} this technique and the general results from Section~\ref{sec_Oscillatory_integrals} will be applied to the Cauchy problem for the one dimensional time dependent Schrödinger equation \eqref{Eq_Cauchy_intro}. In particular, assuming that the Green's function satisfies Assumption~\ref{ass_Greensfunction}, we conclude that for a class of $C^n$-regular initial conditions $F$ with polynomial growth, the solution of \eqref{Eq_Cauchy_intro} can be expressed as the limit 
\begin{equation}\label{Eq_Psi_intro}
\Psi(t,x)=\lim\limits_{\varepsilon\to 0^+}\int_\mathbb{R}e^{-\varepsilon y^2}G(t,x,y)F(y)dy,\qquad t >0,\,x\in\mathbb{R}.
\end{equation}
Here we shall also rely on the Leibniz rule for oscillatory integrals from Theorem~\ref{thm_Integral_derivative}, which in the concrete situation \eqref{Eq_Psi_intro} leads to
\begin{align*}
\frac{\partial^2}{\partial x^2}\Psi (t,x)&=\lim\limits_{\varepsilon\to 0^+}\int_\mathbb{R}e^{-\varepsilon y^2}\frac{\partial^2}{\partial x^2}G(t,x,y)F(y)dy,\\
\frac{\partial}{\partial t}\Psi (t,x)&=\lim\limits_{\varepsilon\to 0^+}\int_\mathbb{R}e^{-\varepsilon y^2}\frac{\partial}{\partial t}G(t,x,y)F(y)dy,
\end{align*}
and thus shows that $\Psi$ in \eqref{Eq_Psi_intro} is a solution of the time dependent Schrödinger equation. Furthermore, using the abstract continuity result Proposition~\ref{contiprop}, it turns out in Theorem~\ref{thm_Psi_continuous_dependency} that $\Psi$ depends continuously on the initial condition $F$. As a simple illustration of our general results we consider the free particle in Section~\ref{sec_The_free_particle}, where we also discuss the initial condition \eqref{Eq_Plane_wave} and compute the moments of the corresponding Green's function, i.e. choose the initial conditions $F(y)=y^m$. We also refer to \cite{ABCS22,ACSST13,ACSST17,ACSS19,BCSS14,CSSY22} for other explicit examples of Green's functions and related considerations in the context of superoscillations.

Finally, we briefly connect and relate our investigations to the general theory of {\it oscillatory integrals}, which appear in various branches in mathematics and physics. Note first that the single valued integral \eqref{Eq_Integral_f_intro} is an oscillatory integral of the form
\begin{equation}\label{Eq_Oscillatory_integral_first_kind}
\int_\Omega e^{ia\phi(y)}f(y)dy.
\end{equation}
While our method \eqref{Eq_Integration_by_parts_intro} of iterative integration by parts uses the term $e^{iay^2}$ to gain additional powers of $1/y$ in order to make the integral absolutely convergent, typically these integrals are treated for functions $f$ which are already absolutely integrable. The main interest is mostly on the asymptotic behaviour when $a\to\infty$, and the classical tool is the method of stationary phase; we refer the interested reader to the monographs by Hörmander \cite[Section~7.7]{H03}, Sogge \cite[Chapter~1]{S17} and Stein \cite[Chapter~VIII]{S93}. The case when $f$ does not possess any decay at infinity is treated in, e.g., \cite{AM04,AM05}, \cite[Section 1.2]{H71} and \cite[Section~0.5]{S17}. There, the oscillatory integral is, similar to \eqref{Eq_Oscillatory_integral_intro}, defined as the limit of regularized integrals
\begin{equation*}
\lim\limits_{\varepsilon\to 0^+}\int_\Omega\rho(\varepsilon y)e^{ia\phi(y)}f(y)dy,
\end{equation*}
where $\rho$ is a function with $\rho(0)=1$, which makes the above integral absolute convergent for every $\varepsilon>0$. However, the function $f$ still has to have $C^\infty$-regularity with growth assumptions on all derivatives, while in this paper we present an improved integration by parts method which reduces, at least for $\phi(y)=y^2$ and Gaussian regularizers $\rho(y)=e^{-y^2}$, the regularity assumptions to $C^n$, and polynomial boundedness is only needed for the $n$-th derivative. Typical examples are the Airy integral in \cite[Section 7.6]{H03} or the Stein-Wainger oscillatory integral treated in \cite{PP10,P08,RS87,SW70}.

Another possible approach to oscillatory integrals, which formally seems closer to the Green's function integral \eqref{Eq_Ghat_integral_intro}, is to introduce a second parameter $x$ as well as an integral kernel $K(x,y)$ into the integral \eqref{Eq_Oscillatory_integral_first_kind}. However, while we still consider this integral as the limit of regularized integrals \eqref{Eq_Psi_intro}, the classical approach in this case is to consider the integral
\begin{equation}\label{Eq_Oscillatory_integral_second_kind}
\int_\Omega e^{i\phi(x,y)}K(x,y)f(y)dy
\end{equation}
only for $f$ in a dense subset, and then extend it either as a bounded operator or in the sense of distributions. Mapping properties in between the Lebesgue spaces $L^p(\Omega)$ and $L^q(\Omega)$ are discussed in, e.g., \cite{B91,H73,L06,S16}. Such operators and their applications in the theory of partial differential equations were thoroughly investigated by Duistermaat and Hörmander in \cite{DH71,D74,H71,H03}. Typical examples for \eqref{Eq_Oscillatory_integral_second_kind} are the standard Fourier transform ($\phi(x,y)=xy$ and $K(x,y)=1$ on $\Omega=\mathbb R^n$), or more general Fourier integral operators, also with measure-valued kernels $K$ which restrict the integral to a submanifold \cite{BG11,S93,T03}. As another important application we point out that in the papers \cite{ABB96,AB93,AGM02,AH76,AM04} by Albeverio and collaborators, the infinite dimensional Feynman path integral is treated as the limit of finite dimensional oscillatory integrals. \medskip

\noindent {\bf Acknowledgements.} We are indebted to Jean-Claude Cuenin and Ahmed Sebbar for fruitful discussions and helpful remarks. This research was funded by the Austrian Science Fund (FWF) Grant-DOI: 10.55776/P33568 and 10.55776/J4685. The research of P.S. is also funded 
by the European Union--NextGenerationEU.

\section{Spaces of polynomially bounded $C^n$-functions}

In this preparatory section we introduce and study two families of spaces of $n$-times continuously differentiable functions, which both play an important role in this paper. First, in Definition~\ref{defi_Cnalpha} for $\alpha\geq 0$ and $b>0$ we define the space $C_\alpha^n([b,\infty))$, where only the $n$-th derivative is assumed to satisfy a polynomial bound $y^\alpha$ at $\infty$. For functions $f$ from this space we will define the oscillatory integral $\mathcal{I}_{a,b}(f)$ in Theorem~\ref{thm_Integral}. Second, we consider the space $C^n(\mathbb{R},r^\alpha)$ with $r(y)=1+\vert y\vert$ in Definition~\ref{defi_Cnr}, where all derivatives satisfy the polynomial bound $r(y)^\alpha$ at $\infty$. This space turns out to be more convenient when products of functions appear in the integrals, such as it is the case for the Green's function and the initial condition in Theorem~\ref{thm_Psi}.

\begin{defi}\label{defi_Cnalpha}
For $b>0$, $n\in\mathbb{N}_0$, and $\alpha\geq 0$, define the space
\begin{equation}\label{Eq_Cnalpha_space}
C^n_\alpha([b,\infty)):=\Set{f\in C^n([b,\infty)) | \vert f^{(n)}(y)\vert\leq My^\alpha,\;\text{for some}\;M\geq 0}
\end{equation}
equipped with the norm
\begin{equation}\label{Eq_Cnalpha_Norm}
\Vert f\Vert_{C_\alpha^n([b,\infty))}:=\sum\limits_{k=0}^{n-1}\frac{|f^{(k)}(b)|}{b^{n-k+\alpha}}+\sup\limits_{y\in[b,\infty)}\frac{|f^{(n)}(y)|}{y^\alpha};
\end{equation}
here (and in the following) we use the convention $\sum_{k=0}^{-1}:=0$.
\end{defi}

Observe that for $f\in C_\alpha^n([b,\infty))$ only a bound on the highest order derivative $f^{(n)}$ is required. However, the next Proposition~\ref{prop_f_derivatives_estimate} also provides bounds for the lower order derivatives, which will be used frequently in the following. The estimate \eqref{Eq_fk_estimate} is also useful to verify that $C_\alpha^n([b,\infty))$ is a Banach space; this can be done in the same way as for $C^n([b,\infty))$ and is not repeated here.

\begin{prop}\label{prop_f_derivatives_estimate}
Let $b>0$, $n\in\mathbb{N}_0$, and $\alpha\geq 0$. Then for any $f\in C^n_\alpha([b,\infty))$ one has 
\begin{equation}\label{Eq_fk_estimate}
|f^{(k)}(y)|\leq\Vert f\Vert_{C^n_\alpha([b,\infty))}y^{n-k+\alpha},\qquad y\in[b,\infty),\,k\in\{0,\dots,n\}. 
\end{equation}
\end{prop}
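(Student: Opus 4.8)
The plan is to prove the estimate by downward induction on $k$, starting from $k=n$ and decreasing to $k=0$. The base case $k=n$ is immediate: writing $M:=\sup_{y\in[b,\infty)}|f^{(n)}(y)|/y^\alpha$, the very definition of $C^n_\alpha([b,\infty))$ guarantees $M<\infty$, and since $M$ is one of the (nonnegative) summands in \eqref{Eq_Cnalpha_Norm} we have $M\le\Vert f\Vert_{C^n_\alpha([b,\infty))}$. Hence $|f^{(n)}(y)|\le My^\alpha\le\Vert f\Vert_{C^n_\alpha([b,\infty))}y^\alpha$, which is exactly \eqref{Eq_fk_estimate} with $n-k+\alpha=\alpha$.

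For the induction step I would fix $k\in\{0,\dots,n-1\}$, assume \eqref{Eq_fk_estimate} for $k+1$, and use the fundamental theorem of calculus in the form $f^{(k)}(y)=f^{(k)}(b)+\int_b^y f^{(k+1)}(s)\,ds$, estimating the two contributions separately. The boundary term is controlled directly by the norm: since $|f^{(k)}(b)|/b^{n-k+\alpha}$ is one of the summands defining $\Vert f\Vert_{C^n_\alpha([b,\infty))}$, one obtains $|f^{(k)}(b)|\le\Vert f\Vert_{C^n_\alpha([b,\infty))}\,b^{n-k+\alpha}$. For the integral, the induction hypothesis gives $|f^{(k+1)}(s)|\le\Vert f\Vert_{C^n_\alpha([b,\infty))}\,s^{n-k-1+\alpha}$, and since $n-k-1+\alpha\ge 0$ (as $k\le n-1$ and $\alpha\ge 0$) the power is continuous on $[b,y]$ with $b>0$, so integrating it produces $\frac{1}{n-k+\alpha}\big(y^{n-k+\alpha}-b^{n-k+\alpha}\big)$.

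The one point requiring care — and the only place where the precise form of the norm is exploited — is the bookkeeping of the constant, since bounding the boundary and integral contributions each crudely by $\Vert f\Vert_{C^n_\alpha([b,\infty))}\,y^{n-k+\alpha}$ would yield a spurious factor $2$ rather than the sharp constant. The key observation is that for $k\le n-1$ one has $n-k+\alpha\ge 1$, hence $\frac{1}{n-k+\alpha}\le 1$; this lets me estimate the integral contribution by $\Vert f\Vert_{C^n_\alpha([b,\infty))}\big(y^{n-k+\alpha}-b^{n-k+\alpha}\big)$. Adding the boundary bound $\Vert f\Vert_{C^n_\alpha([b,\infty))}\,b^{n-k+\alpha}$, the two $b^{n-k+\alpha}$ powers cancel and exactly $\Vert f\Vert_{C^n_\alpha([b,\infty))}\,y^{n-k+\alpha}$ remains, closing the induction.

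Finally I would note that the degenerate case $n=0$ is covered by the base case alone, the sum in \eqref{Eq_Cnalpha_Norm} being empty by the stated convention $\sum_{k=0}^{-1}:=0$; and that throughout, the inequalities $y\ge b>0$ together with $n-k+\alpha\ge 0$ are what keep all powers well defined, nonnegative, and monotone in $y$. The main (though mild) obstacle is precisely the sharp-constant cancellation in the third paragraph; once the inequality $\frac{1}{n-k+\alpha}\le 1$ is in place the remainder is routine.
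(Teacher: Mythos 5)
Your proof is correct and shares the paper's skeleton --- downward induction on $k$ combined with the fundamental theorem of calculus --- but it closes the induction in a genuinely different way. The paper inducts on a \emph{strengthened} statement,
\begin{equation*}
|f^{(k)}(y)|\leq\bigg(\sum\limits_{l=k}^{n-1}\frac{|f^{(l)}(b)|}{b^{n-l+\alpha}}+M\bigg)y^{n-k+\alpha},\qquad M=\sup_{y\in[b,\infty)}\frac{|f^{(n)}(y)|}{y^\alpha},
\end{equation*}
so the constant absorbs exactly one new boundary term per step and is only bounded by the full norm at the very end; this permits coarse estimates along the way (dropping the lower limit of the integral entirely and inflating the boundary term by $(y/b)^{n-k+\alpha+1}\geq 1$), with no cancellation needed. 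You instead induct directly on the claimed inequality with the full norm $\Vert f\Vert_{C^n_\alpha([b,\infty))}$ as the constant, which is exactly where a naive estimate would double the constant at each step; your cancellation device --- bounding $|f^{(k)}(b)|\leq\Vert f\Vert_{C^n_\alpha([b,\infty))}b^{n-k+\alpha}$ via the norm's boundary summand (legitimate precisely because $k\leq n-1$), and retaining the $-b^{n-k+\alpha}$ contribution of the integral so that the two terms cancel --- resolves this, and the two inequalities it rests on ($\tfrac{1}{n-k+\alpha}\leq 1$ and $y^{n-k+\alpha}-b^{n-k+\alpha}\geq 0$) both hold in the stated range. The two devices are interchangeable here: yours avoids formulating an auxiliary inequality and yields the sharp constant directly, while the paper's strengthened hypothesis tolerates sloppier intermediate bounds at the cost of carrying the partial sums explicitly.
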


\begin{proof}
Let us set $M:=\sup_{y\in[b,\infty)}\frac{|f^{(n)}(y)|}{y^\alpha}$ and prove by induction for every $k\in\{0,\dots,n\}$ the stronger inequality
\begin{equation}\label{Eq_f_derivatives_estimate_1}
|f^{(k)}(y)|\leq\bigg(\sum\limits_{l=k}^{n-1}\frac{|f^{(l)}(b)|}{b^{n-l+\alpha}}+M\bigg)y^{n-k+\alpha},\qquad y\in[b,\infty).
\end{equation}
Since $f\in C^n_\alpha([b,\infty))$, the inequality \eqref{Eq_f_derivatives_estimate_1} for $k=n$ follows immediately from the definition of $M$. For the induction step $k\to k-1$, we get
\begin{align*}
|f^{(k-1)}(y)|&=\bigg|f^{(k-1)}(b)+\int_b^yf^{(k)}(z)dz\bigg| \\
&\leq|f^{(k-1)}(b)|+\bigg(\sum\limits_{l=k}^{n-1}\frac{|f^{(l)}(b)|}{b^{n-l+\alpha}}+M\bigg)\int_b^yz^{n-k+\alpha}dz \\
&\leq|f^{(k-1)}(b)|+\bigg(\sum\limits_{l=k}^{n-1}\frac{|f^{(l)}(b)|}{b^{n-l+\alpha}}+M\bigg)\frac{y^{n-k+\alpha+1}}{n-k+\alpha+1} \\
&\leq|f^{(k-1)}(b)|\Big(\frac{y}{b}\Big)^{n-k+\alpha+1}+\bigg(\sum\limits_{l=k}^{n-1}\frac{|f^{(l)}(b)|}{b^{n-l+\alpha}}+M\bigg)y^{n-k+\alpha+1} \\
&=\bigg(\sum\limits_{l=k-1}^{n-1}\frac{|f^{(l)}(b)|}{b^{n-l+\alpha}}+M\bigg)y^{n-k+\alpha+1},\qquad y\in[b,\infty).
\end{align*}
This shows estimate \eqref{Eq_f_derivatives_estimate_1}, which implies the inequality \eqref{Eq_fk_estimate}.
\end{proof}

In the following we collect four corollaries that are all based on the bound \eqref{Eq_fk_estimate}. The first one is a continuous embedding result.

\begin{cor}\label{cor_Reducing_interval}
Let $b>0$, $n\in\mathbb{N}_0$, $\alpha\geq 0$, and $f\in C_\alpha^n([b,\infty))$. Then for every $c\geq b$ one has $f|_{[c,\infty)}\in C_\alpha^n([c,\infty))$ and
\begin{equation*}
\Vert f|_{[c,\infty)}\Vert_{C_\alpha^n([c,\infty))}\leq(n+1)\Vert f\Vert_{C_\alpha^n([b,\infty))}.
\end{equation*}
\end{cor}

\begin{proof}
It is clear from the definition of the space \eqref{Eq_Cnalpha_space} that  $f|_{[c,\infty)}\in C_\alpha^n([c,\infty))$. Using \eqref{Eq_fk_estimate} for $y=c$, we can estimate the norm \eqref{Eq_Cnalpha_Norm} of the restricted function $f|_{[c,\infty)}$ by

\begin{align*}
\Vert f|_{[c,\infty)}\Vert_{C_\alpha^n([c,\infty))}&=\sum\limits_{k=0}^{n-1}\frac{|f^{(k)}(c)|}{c^{n-k+\alpha}}+\sup\limits_{y\in[c,\infty)}\frac{|f^{(n)}(y)|}{y^\alpha} \\
&\leq\sum\limits_{k=0}^{n-1}\frac{\Vert f\Vert_{C_\alpha^n([b,\infty))}c^{n-k+\alpha}}{c^{n-k+\alpha}}+\sup\limits_{y\in[b,\infty)}\frac{|f^{(n)}(y)|}{y^\alpha} \\
&\leq(n+1)\Vert f\Vert_{C_\alpha^n([b,\infty))}. \qedhere
\end{align*}
\end{proof}

\begin{cor}
Let $b>0$, $n\in\mathbb{N}_0$, $\alpha\geq 0$, and $f\in C_\alpha^n([b,\infty))$. Then for every $m\in\{0,\dots,n\}$ one has $f^{(m)}\in C_\alpha^{n-m}([b,\infty))$ and
\begin{equation*}
\Vert f^{(m)}\Vert_{C_\alpha^{n-m}([b,\infty))}\leq\Vert f\Vert_{C_\alpha^n([b,\infty))}.
\end{equation*}
\end{cor}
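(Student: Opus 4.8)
The plan is to reduce everything to the definitions of the two norms, since the statement is essentially a reindexing of the boundary sum in \eqref{Eq_Cnalpha_Norm}. First I would establish membership. Writing $g:=f^{(m)}$, the function $g$ is clearly $(n-m)$-times continuously differentiable on $[b,\infty)$, and its top-order derivative is $g^{(n-m)}=f^{(n)}$, which by assumption satisfies $|f^{(n)}(y)|\leq My^\alpha$ for some $M\geq 0$. Hence $f^{(m)}\in C_\alpha^{n-m}([b,\infty))$ follows directly from \eqref{Eq_Cnalpha_space}; notably this part does not require Proposition~\ref{prop_f_derivatives_estimate} at all.

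For the norm estimate I would write out $\Vert f^{(m)}\Vert_{C_\alpha^{n-m}([b,\infty))}$ according to \eqref{Eq_Cnalpha_Norm}, using $(f^{(m)})^{(k)}=f^{(m+k)}$ throughout. The supremum term then reads $\sup_{y\in[b,\infty)}|f^{(n)}(y)|/y^\alpha$, which is \emph{exactly} the supremum term appearing in $\Vert f\Vert_{C_\alpha^n([b,\infty))}$. In the boundary sum I would substitute $l=m+k$: the range $k\in\{0,\dots,(n-m)-1\}$ becomes $l\in\{m,\dots,n-1\}$, and the exponent transforms as $(n-m)-k+\alpha=n-l+\alpha$, so that the sum equals $\sum_{l=m}^{n-1}|f^{(l)}(b)|/b^{n-l+\alpha}$.

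The final step is then immediate: this is the tail, starting from index $m$, of the full boundary sum $\sum_{l=0}^{n-1}|f^{(l)}(b)|/b^{n-l+\alpha}$ appearing in $\Vert f\Vert_{C_\alpha^n([b,\infty))}$. Since every discarded summand with $l\in\{0,\dots,m-1\}$ is non-negative, and the two supremum terms coincide, the claimed inequality $\Vert f^{(m)}\Vert_{C_\alpha^{n-m}([b,\infty))}\leq\Vert f\Vert_{C_\alpha^n([b,\infty))}$ follows by simply dropping non-negative terms. I do not anticipate any genuine obstacle here; the only point requiring care is the bookkeeping in the index shift $l=m+k$, in particular verifying that the power of $b$ matches so that the supremum terms agree exactly rather than merely being comparable. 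Once that is checked, the estimate drops out at once.
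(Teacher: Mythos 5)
Your proof is correct and takes essentially the same route as the paper's: both write out $\Vert f^{(m)}\Vert_{C_\alpha^{n-m}([b,\infty))}$ from \eqref{Eq_Cnalpha_Norm}, reindex the boundary sum (your $l=m+k$ is the paper's $k\to k-m$), and conclude by observing that this is the tail of the sum in $\Vert f\Vert_{C_\alpha^n([b,\infty))}$ while the supremum terms coincide, so the inequality follows by dropping the non-negative terms $l\in\{0,\dots,m-1\}$. The only cosmetic difference is that the paper invokes \eqref{Eq_fk_estimate} to bound all derivatives of $f^{(m)}$, whereas you correctly note that membership in $C_\alpha^{n-m}([b,\infty))$ already follows from the definition \eqref{Eq_Cnalpha_space}, since only the top-order derivative $f^{(n)}$ requires a bound.
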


\begin{proof}
Using the inequality \eqref{Eq_fk_estimate}, the derivatives of $f^{(m)}$ can be estimated by
\begin{equation*}
\Big|\frac{d^k}{dy^k}f^{(m)}(y)\Big|=|f^{(m+k)}(y)|\leq\Vert f\Vert_{C_\alpha^n([b,\infty))}y^{n-m-k+\alpha},\qquad 
k\in\{0,\dots,n-m\}.
\end{equation*}
This estimate shows that $f^{(m)}\in C_\alpha^{n-m}([b,\infty))$, with norm bounded by
\begin{align*}
\Vert f^{(m)}\Vert_{C_\alpha^{n-m}([b,\infty))}&=\sum\limits_{k=0}^{n-m-1}\frac{|f^{(m+k)}(b)|}{b^{n-m-k+\alpha}}+\sup\limits_{y\in[b,\infty)}\frac{|f^{(m+n-m)}(y)|}{y^\alpha} \\
&=\sum\limits_{k=m}^{n-1}\frac{|f^{(k)}(b)|}{b^{n-k+\alpha}}+\sup\limits_{y\in[b,\infty)}\frac{|f^{(n)}(y)|}{y^\alpha}\\
&\leq\Vert f\Vert_{C_\alpha^n},
\end{align*}
where the first equation is the definition of the norm \eqref{Eq_Cnalpha_Norm}, in the second equation we substituted $k\to k-m$, and in the last inequality we added the missing terms $k=0,\dots,m-1$ in the sum.
\end{proof}

\begin{cor}
Let $b>0$, $n,m\in\mathbb{N}_0$, and $\alpha,\beta\geq 0$. If $n\geq m$ and $\beta\geq\alpha+n-m$, then one has
$C_\alpha^n([b,\infty))\subseteq C_\beta^m([b,\infty))$ and
\begin{equation*}
\Vert f\Vert_{C_\beta^m([b,\infty))}\leq\frac{m+1}{b^{\beta-\alpha-n+m}}\Vert f\Vert_{C_\alpha^n([b,\infty))},\qquad f\in C_\alpha^n([b,\infty)).
\end{equation*}
\end{cor}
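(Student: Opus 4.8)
The plan is to deduce everything from the pointwise bound \eqref{Eq_fk_estimate} of Proposition~\ref{prop_f_derivatives_estimate}, exactly in the spirit of the two preceding corollaries. Since $n\geq m$, every $f\in C_\alpha^n([b,\infty))$ is in particular $m$-times continuously differentiable, so $f\in C^m([b,\infty))$ is automatic and only the polynomial bound on the top derivative $f^{(m)}$ and the norm estimate remain to be verified.

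First I would establish the membership $f\in C_\beta^m([b,\infty))$. Applying \eqref{Eq_fk_estimate} with $k=m$ gives
\begin{equation*}
|f^{(m)}(y)|\leq\Vert f\Vert_{C_\alpha^n([b,\infty))}\,y^{n-m+\alpha},\qquad y\in[b,\infty).
\end{equation*}
The key manipulation is to factor $y^{n-m+\alpha}=y^\beta\,y^{-(\beta-\alpha-n+m)}$ and to observe that, by the hypothesis $\beta\geq\alpha+n-m$, the exponent $\beta-\alpha-n+m$ is nonnegative; hence $y\mapsto y^{-(\beta-\alpha-n+m)}$ is nonincreasing on $[b,\infty)$ and is bounded by its value $b^{-(\beta-\alpha-n+m)}$ at the left endpoint $y=b$. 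This yields
\begin{equation*}
|f^{(m)}(y)|\leq\frac{\Vert f\Vert_{C_\alpha^n([b,\infty))}}{b^{\beta-\alpha-n+m}}\,y^\beta,\qquad y\in[b,\infty),
\end{equation*}
so $f^{(m)}$ satisfies the required polynomial bound $y^\beta$ and therefore $f\in C_\beta^m([b,\infty))$.

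It remains to bound the norm \eqref{Eq_Cnalpha_Norm} of $f$ in $C_\beta^m([b,\infty))$, which splits into the $m$ boundary terms and the single supremum term. The supremum term is controlled directly by the displayed estimate above. For each boundary term with $k\in\{0,\dots,m-1\}$, I would insert \eqref{Eq_fk_estimate} at $y=b$, namely $|f^{(k)}(b)|\leq\Vert f\Vert_{C_\alpha^n([b,\infty))}\,b^{n-k+\alpha}$, and then compute
\begin{equation*}
\frac{|f^{(k)}(b)|}{b^{m-k+\beta}}\leq\Vert f\Vert_{C_\alpha^n([b,\infty))}\,b^{(n-k+\alpha)-(m-k+\beta)}=\frac{\Vert f\Vert_{C_\alpha^n([b,\infty))}}{b^{\beta-\alpha-n+m}},
\end{equation*}
where the $k$-dependence cancels and the exponent again collapses to $-(\beta-\alpha-n+m)$. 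Summing the $m$ boundary terms together with the one supremum term then produces the factor $m+1$ and the claimed inequality.

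There is no real obstacle here beyond bookkeeping: the only point requiring care is the sign of $\beta-\alpha-n+m$, which is precisely what allows the powers of $b$ to be pulled out with the correct (nonincreasing) monotonicity and which guarantees that the negative powers of $y$ are maximized at the left endpoint $y=b$.
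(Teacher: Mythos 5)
Your proof is correct and follows essentially the same route as the paper: both rest entirely on the pointwise bound \eqref{Eq_fk_estimate}, use $y\geq b$ together with $\beta\geq\alpha+n-m$ to trade the power $y^{n-k+\alpha}$ for $y^{m-k+\beta}/b^{\beta-\alpha-n+m}$, and then sum the $m$ boundary terms and one supremum term of the norm \eqref{Eq_Cnalpha_Norm} to obtain the factor $m+1$. The only cosmetic difference is that the paper states the traded estimate uniformly for all $k\in\{0,\dots,n\}$ before splitting the norm, whereas you treat the supremum term (via $k=m$) and the boundary terms (via evaluation at $y=b$) separately.
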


\begin{proof}
Using the inequality \eqref{Eq_fk_estimate}, we get for every $f\in C_\alpha^n([b,\infty))$ and $k\in\{0,\dots,n\}$ the estimate
\begin{equation*}
|f^{(k)}(y)|\leq\Vert f\Vert_{C_\alpha^n([b,\infty))}y^{n-k+\alpha}\leq\Vert f\Vert_{C_\alpha^n([b,\infty))}\frac{y^{m-k+\beta}}{b^{\beta-\alpha-n+m}},
\end{equation*}
where in the second inequality $y\geq b$ and $\beta\geq\alpha+n-m$ was used. This estimate shows that $f\in C_\beta^m([b,\infty))$, with norm bounded by
\begin{align*}
\Vert f\Vert_{C_\beta^m([b,\infty))}&=\sum\limits_{k=0}^{m-1}\frac{|f^{(k)}(b)|}{b^{m-k+\beta}}+\sup\limits_{y\in[b,\infty)}\frac{|f^{(m)}(y)|}{y^\beta} \\
&\leq\Vert f\Vert_{C_\alpha^n([b,\infty))}\bigg(\sum\limits_{k=0}^{m-1}\frac{1}{b^{\beta-\alpha-n+m}}+\frac{1}{b^{\beta-\alpha-n+m}}\bigg) \\
&=\Vert f\Vert_{C_\alpha^n([b,\infty))}\frac{m+1}{b^{\beta-\alpha-n+m}}. \qedhere
\end{align*}
\end{proof}

\noindent The next result multiplies functions $f\in C_\alpha^n([b,\infty))$ with monomials $y^p$. In the following we shall use the convention $\prod_{j=0}^{-1}:=1$.

\begin{cor}\label{cor_ypf}
Let $b>0$, $n\in\mathbb{N}_0$, $\alpha\geq 0$ and $f\in C^n_\alpha([b,\infty))$. Then for every $p\geq 0$ one has $y^pf\in C_{\alpha+p}^n([b,\infty))$ and
\begin{equation*}
\Vert y^pf\Vert_{C_{\alpha+p}^n([b,\infty))}\leq(p+n+1)^n\Vert f\Vert_{C_\alpha^n([b,\infty))}.
\end{equation*}
\end{cor}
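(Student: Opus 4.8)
The plan is to differentiate the product $y^p f(y)$ with the Leibniz rule and reduce everything to the derivative bound \eqref{Eq_fk_estimate} of Proposition~\ref{prop_f_derivatives_estimate}. Since $b>0$, the function $y\mapsto y^p$ is $C^\infty$ on $[b,\infty)$, so $g:=y^pf$ is $n$-times continuously differentiable there, and for $k\in\{0,\dots,n\}$ the Leibniz rule gives
\[
g^{(k)}(y)=\sum_{l=0}^k\binom{k}{l}(y^p)^{(l)}f^{(k-l)}(y),\qquad (y^p)^{(l)}=\Big(\prod_{j=0}^{l-1}(p-j)\Big)y^{p-l},
\]
where the case $l=0$ uses the convention $\prod_{j=0}^{-1}:=1$ fixed just before the statement.

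Next I would estimate each summand. Taking absolute values of the monomial factor and inserting $|f^{(k-l)}(y)|\le\Vert f\Vert_{C_\alpha^n([b,\infty))}\,y^{n-k+l+\alpha}$ from \eqref{Eq_fk_estimate}, the two powers of $y$ combine to $y^{n-k+\alpha+p}$, which is exactly the exponent dictated by the target space $C^n_{\alpha+p}([b,\infty))$. This yields
\[
|g^{(k)}(y)|\le\Vert f\Vert_{C_\alpha^n([b,\infty))}\,C_k\,y^{n-k+\alpha+p},\qquad C_k:=\sum_{l=0}^k\binom{k}{l}\Big|\prod_{j=0}^{l-1}(p-j)\Big|,
\]
for all $y\in[b,\infty)$; the case $k=n$ already shows $g\in C^n_{\alpha+p}([b,\infty))$. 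Substituting these bounds into the norm \eqref{Eq_Cnalpha_Norm} and evaluating the lower-order estimates at $y=b$, each of the $n$ boundary terms is at most $\Vert f\Vert_{C_\alpha^n([b,\infty))}\,C_k$ and the supremum term is at most $\Vert f\Vert_{C_\alpha^n([b,\infty))}\,C_n$, so that $\Vert g\Vert_{C^n_{\alpha+p}([b,\infty))}\le\Vert f\Vert_{C_\alpha^n([b,\infty))}\sum_{k=0}^nC_k$.

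It thus remains to prove the purely combinatorial inequality $\sum_{k=0}^nC_k\le(p+n+1)^n$. Swapping the order of summation and using the hockey-stick identity $\sum_{k=l}^n\binom{k}{l}=\binom{n+1}{l+1}$ recasts this as
\[
\sum_{l=0}^n\binom{n+1}{l+1}\Big|\prod_{j=0}^{l-1}(p-j)\Big|\le(p+n+1)^n.
\]
I expect this to be the real obstacle, and I would handle it by a case split. If $p\ge n-1$, then every factor satisfies $0\le p-j\le p$, so $|\prod_{j=0}^{l-1}(p-j)|\le p^l$, and comparing $\sum_l\binom{n+1}{l+1}p^l$ coefficientwise with $(p+n+1)^n=\sum_l\binom{n}{l}(n+1)^{n-l}p^l$ reduces matters to $\binom{n+1}{l+1}=\binom{n}{l}\tfrac{n+1}{l+1}\le\binom{n}{l}(n+1)^{n-l}$, which holds for all $l\le n$. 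If $0\le p<n-1$ (so $n\ge2$), then $|p-j|\le\max(p,j)\le n-1$, hence $|\prod_{j=0}^{l-1}(p-j)|\le(n-1)^l$ and $\sum_l\binom{n+1}{l+1}(n-1)^l=\sum_{i=0}^n n^i\le\sum_{i=0}^n\binom{n}{i}n^i=(n+1)^n\le(p+n+1)^n$. The main difficulty is therefore not the analysis but this binomial bookkeeping, in particular isolating the falling-factorial cancellation in $|\prod_{j=0}^{l-1}(p-j)|$ that keeps the estimate valid in the regime $p\to\infty$ where it is tight.
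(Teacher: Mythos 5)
Your proof is correct, and its analytic core --- the Leibniz expansion of $\frac{d^k}{dy^k}(y^pf(y))$ followed by the derivative bound \eqref{Eq_fk_estimate}, which combines the two powers of $y$ into exactly $y^{n-k+\alpha+p}$ --- coincides with the paper's. Where you diverge is the combinatorial endgame, and there the paper is substantially simpler: instead of carrying the exact falling factorial through a hockey-stick resummation and a case split on $p\geq n-1$ versus $p<n-1$, the paper estimates each factor crudely by $|p-j|\leq p+j\leq p+n-1$ (valid for every $p\geq 0$ since $j\leq l-1\leq n-1$), so the binomial theorem collapses the inner sum to $\sum_{l=0}^k\binom{k}{l}(p+n-1)^l=(p+n)^k$. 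The norm is then at most $\Vert f\Vert_{C_\alpha^n([b,\infty))}\big(\sum_{k=0}^{n-1}(p+n)^k+(p+n)^n\big)=\Vert f\Vert_{C_\alpha^n([b,\infty))}\sum_{k=0}^n(p+n)^k$, and a second application of the binomial theorem, $\sum_{k=0}^n(p+n)^k\leq\sum_{k=0}^n\binom{n}{k}(p+n)^k=(p+n+1)^n$, finishes. In other words, the ``falling-factorial cancellation'' you identified as the real obstacle is not needed at all: the target constant $(p+n+1)^n$ is generous enough that the triangle inequality $|p-j|\leq p+j$ suffices uniformly in $p$, with no case distinction. Your sharper bounds ($p^l$ for large $p$, $(n-1)^l$ for small $p$) are correct and give tighter intermediate constants, but they cost you the hockey-stick identity, a coefficientwise comparison against $(p+n+1)^n$, and a geometric-series evaluation --- bookkeeping the paper's two-line binomial argument avoids.
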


\begin{proof}
Using the inequality \eqref{Eq_fk_estimate}, we get for every $k\in\{0,\dots,n\}$ the estimate
\begin{equation}\label{Eq_ypf_estimate_1}
\begin{split}
\Big|\frac{d^k}{dy^k}(y^pf(y))\Big|&=\bigg|\sum\limits_{l=0}^k{k\choose l}\bigg(\prod\limits_{j=0}^{l-1}(p-j)\bigg)y^{p-l}f^{(k-l)}(y)\bigg|  \\
&\leq\Vert f\Vert_{C_\alpha^n([b,\infty))}\sum\limits_{l=0}^k{k\choose l}\bigg(\prod\limits_{j=0}^{l-1}|p-j|\bigg)y^{n-k+\alpha+p}. 
\end{split}
\end{equation}
Estimating $|p-j|\leq p+j\leq p+n-1$, using the largest $j$-value $j\leq l-1\leq k-1\leq n-1$, the inequality \eqref{Eq_ypf_estimate_1} becomes
\begin{align*}
\Big|\frac{d^k}{dy^k}(y^pf(y))\Big|&\leq\Vert f\Vert_{C_\alpha^n([b,\infty))}\sum\limits_{l=0}^k{k\choose l}(p+n-1)^ly^{n-k+\alpha+p} \\
&=\Vert f\Vert_{C_\alpha^n([b,\infty))}(p+n)^ky^{n-k+\alpha+p}
\end{align*}
for $y\in[b,\infty)$. In the second line we used the binomial formula $\sum_{l=0}^k{k\choose l}\xi^l=(\xi+1)^k$. This estimate shows that $y^pf\in C_{\alpha+p}^n([b,\infty))$, with norm bounded by
\begin{align*}
\Vert y^pf\Vert_{C_{\alpha+p}^n([b,\infty))}&=\sum\limits_{k=0}^{n-1}\frac{\Big|\frac{d^k}{dy^k}(y^pf(y))\big|_{y=b}\Big|}{b^{n-k+\alpha+p}}+\sup\limits_{y\in[b,\infty)}\frac{|\frac{d^n}{dy^n}(y^pf(y))|}{y^{\alpha+p}} \\
&\leq\Vert f\Vert_{C_\alpha^n([b,\infty))}\bigg(\sum\limits_{k=0}^{n-1}(p+n)^k+(p+n)^n\bigg) \\
&\leq\Vert f\Vert_{C_\alpha^n([b,\infty))}\sum\limits_{k=0}^n{n\choose k}(p+n)^k \\
&=\Vert f\Vert_{C_\alpha^n([b,\infty))}(p+n+1)^n,
\end{align*}
where in the last line we again used the binomial formula.
\end{proof}

The next remark shows that the function spaces $C_\alpha^n$ in Definition~\ref{defi_Cnalpha} do not possess natural multiplication properties. The main reason for this is that the growth $y^\alpha$ in \eqref{Eq_Cnalpha_space} is only required for the highest derivative, while the lower derivatives may grow faster, see Proposition~\ref{prop_f_derivatives_estimate}. 

\begin{rem}
Observe that for $f\in C_\alpha^n([b,\infty))$ and $g\in C_\beta^n([b,\infty))$, $\alpha,\beta\geq 0$,  one can not conclude $fg\in C_{\alpha+\beta}^n([b,\infty))$ in general. E.g., for $f(y)=e^{iy}\in C_0^n([b,\infty))$ and $g(y)=y^{n+\beta}\in C^n_\beta([b,\infty))$, Corollary~\ref{cor_ypf} only ensures $fg\in C_{n+\beta}^n([b,\infty))$, but since
\begin{equation*}
\frac{d^n}{dy^n}(y^{n+\beta}e^{iy})=\sum\limits_{k=0}^n{n\choose k}\bigg(\prod\limits_{l=0}^{k-1}(n+\beta-l)\bigg)y^{n-k+\beta}i^{n-k}e^{iy}=\mathcal{O}(y^{n+\beta}),\quad\text{as }y\to\infty,
\end{equation*}
it is clear that $fg\notin C_\beta^n([b,\infty))$ for $n\neq 0$.
\end{rem}

Next we introduce another family of weighted $C^n$-spaces, which are more natural for treating products and hence will be used in Section~\ref{sec_Time_dependent_Schroedinger_equation}, where the product of Green's function and an initial condition appears in the integrand. As weights we consider powers of
\begin{equation*}
r(y):=1+|y|,
\end{equation*}
and in contrast to the $C_\alpha^n$-spaces, not only the $n$-th derivative, but also the lower order derivatives have to satisfy the same upper bound. However, in Lemma~\ref{lem_Cnr_Cnalpha} it turns out that (modulo restrictions) the spaces $C^n(\mathbb{R},r^\alpha)$ can be viewed as a subspace of $C_\alpha^n([b,\infty))$.

\begin{defi}\label{defi_Cnr}
For $n\in\mathbb{N}_0$ and $\alpha\geq 0$, define the space
\begin{equation*}
C^n(\mathbb{R},r^\alpha):=\Set{f\in C^n(\mathbb{R}) | \exists M\geq 0: |f^{(k)}(y)|\leq Mr(y)^\alpha,\,k\in\{0,\dots,n\}},
\end{equation*}
equipped with the norm
\begin{equation}\label{Eq_Cnr_norm}
\Vert f\Vert_{C^n(\mathbb{R},r^\alpha)}:=\max\limits_{k\in\{0,\dots,n\}}\sup_{y\in\mathbb{R}}\frac{|f^{(k)}(y)|}{r(y)^\alpha}.
\end{equation}
\end{defi}

We remark that the space $C^n(\mathbb{R},r^\alpha)$ is a Banach space, which can be proven in the same way as for the standard $C^n$-spaces and is not repeated here. Next we collect some useful features of the functions in $C^n(\mathbb{R},r^\alpha)$ and start with a natural product property.

\begin{prop}\label{prop_Cn_product}
Let $n\in\mathbb{N}_0$ and $\alpha,\beta\geq 0$. Then for every $f\in C^n(\mathbb{R},r^\alpha)$ and $g\in C^n(\mathbb{R},r^\beta)$ one has $fg\in C^n(\mathbb{R},r^{\alpha+\beta})$ and
\begin{equation*}
\Vert fg\Vert_{C^n(\mathbb{R},r^{\alpha+\beta})}\leq 2^n\Vert f\Vert_{C^n(\mathbb{R},r^\alpha)}\Vert g\Vert_{C^n(\mathbb{R},r^\beta)}.
\end{equation*}
\end{prop}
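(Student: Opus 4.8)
The plan is to control each derivative $(fg)^{(k)}$ for $k\in\{0,\dots,n\}$ by the Leibniz rule and then pass to the supremum dictated by the norm \eqref{Eq_Cnr_norm}. Concretely, I would first write, for every $k\in\{0,\dots,n\}$ and $y\in\mathbb{R}$,
\[
(fg)^{(k)}(y)=\sum_{l=0}^{k}\binom{k}{l}f^{(l)}(y)\,g^{(k-l)}(y).
\]
The key structural observation is that every index appearing here stays in the admissible range: since $l\le k\le n$ and $k-l\le k\le n$, both factors are directly controlled by the defining norms, namely $|f^{(l)}(y)|\le\Vert f\Vert_{C^n(\mathbb{R},r^\alpha)}\,r(y)^\alpha$ and $|g^{(k-l)}(y)|\le\Vert g\Vert_{C^n(\mathbb{R},r^\beta)}\,r(y)^\beta$. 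This is precisely the feature that distinguishes the present spaces from the $C_\alpha^n$-spaces: here \emph{all} derivatives up to order $n$, not merely the $n$-th one, obey the same polynomial bound, so no auxiliary estimate in the spirit of Proposition~\ref{prop_f_derivatives_estimate} is required, and the multiplicative behaviour is clean.

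Inserting these two bounds into the Leibniz sum yields
\[
|(fg)^{(k)}(y)|\le\Vert f\Vert_{C^n(\mathbb{R},r^\alpha)}\Vert g\Vert_{C^n(\mathbb{R},r^\beta)}\,r(y)^{\alpha+\beta}\sum_{l=0}^{k}\binom{k}{l},
\]
and I would then invoke the binomial identity $\sum_{l=0}^{k}\binom{k}{l}=2^{k}\le 2^{n}$. Dividing by $r(y)^{\alpha+\beta}$ gives the pointwise estimate $|(fg)^{(k)}(y)|/r(y)^{\alpha+\beta}\le 2^{n}\Vert f\Vert_{C^n(\mathbb{R},r^\alpha)}\Vert g\Vert_{C^n(\mathbb{R},r^\beta)}$, valid uniformly in $k$ and $y$. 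In particular this shows $fg\in C^n(\mathbb{R},r^{\alpha+\beta})$, and taking the supremum over $y\in\mathbb{R}$ and the maximum over $k\in\{0,\dots,n\}$ in \eqref{Eq_Cnr_norm} reproduces exactly the asserted bound.

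I do not anticipate any genuine obstacle, as the argument is elementary. The only two points deserving care are to check that the indices $l$ and $k-l$ in the Leibniz expansion always lie in $\{0,\dots,n\}$, so that the norm bounds legitimately apply to every summand, and to estimate the combinatorial factor uniformly by $2^{k}\le 2^{n}$ rather than by a $k$-dependent quantity; it is this uniform bound that lets a single constant $2^{n}$ serve for all derivative orders simultaneously and produces the stated form of the estimate.
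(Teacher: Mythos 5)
Your proposal is correct and follows exactly the paper's own argument: Leibniz rule, bounding each factor $f^{(l)}$ and $g^{(k-l)}$ by the defining norms, summing the binomial coefficients to $2^k\leq 2^n$, and then taking the maximum over $k$ and the supremum over $y$ in the norm \eqref{Eq_Cnr_norm}. No gaps; nothing to add.
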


\begin{proof}
For every $k\in\{0,\dots,n\}$ the $k$-th derivative of the product $fg$ can be estimated by
\begin{align*}
\Big|\frac{d^k}{dy^k}(f(y)g(y))\Big|&=\Big|\sum\limits_{l=0}^k{k\choose l}f^{(l)}(y)g^{(k-l)}(y)\Big| \\
&\leq\sum\limits_{l=0}^k{k\choose l}\Vert f\Vert_{C^n(\mathbb{R},r^\alpha)}\Vert g\Vert_{C^n(\mathbb{R},r^\beta)}r(y)^{\alpha+\beta} \\
&=2^k\Vert f\Vert_{C^n(\mathbb{R},r^\alpha)}\Vert g\Vert_{C^n(\mathbb{R},r^\beta)}r(y)^{\alpha+\beta}.
\end{align*}
This estimate shows that $fg\in C^n(\mathbb{R},r^{\alpha+\beta})$, with norm bounded by
\begin{equation*}
\Vert fg\Vert_{C^n(\mathbb{R},r^{\alpha+\beta})}\leq\max_{k\in\{0,\dots,n\}}2^k\Vert f\Vert_{C^n(\mathbb{R},r^\alpha)}\Vert g\Vert_{C^n(\mathbb{R},r^\beta)}=2^n\Vert f\Vert_{C^n(\mathbb{R},r^\alpha)}\Vert g\Vert_{C^n(\mathbb{R},r^\beta)}. \qedhere
\end{equation*}
\end{proof}

\begin{lem}\label{lem_ymf_expf}
Let $n\in\mathbb{N}_0$, $\alpha\geq 0$, and $f\in C^n(\mathbb{R},r^\alpha)$. Then the following assertions hold.

\begin{enumerate}
\item[(i)] For every $m\in\mathbb{N}_0$ one has $y^mf\in C^n(\mathbb{R},r^{\alpha+m})$ and
\begin{equation}\label{Eq_ymf_norm_estimate}
\Vert y^mf\Vert_{C^n(\mathbb{R},r^{\alpha+m})}\leq(1+m)^n\Vert f\Vert_{C^n(\mathbb{R},r^\alpha)}.
\end{equation}

\item[(ii)] For every $\kappa\in\mathbb{R}$ one has $e^{i\kappa y}f\in C^n(\mathbb{R},r^\alpha)$ and
\begin{equation}\label{Eq_expf_norm_estimate}
\Vert e^{i\kappa y}f\Vert_{C^n(\mathbb{R},r^\alpha)}\leq(1+|\kappa|)^n\Vert f\Vert_{C^n(\mathbb{R},r^\alpha)}.
\end{equation}
\end{enumerate}
\end{lem}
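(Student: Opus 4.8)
The plan is to prove both assertions directly from the Leibniz rule, exactly in the spirit of the proofs of Corollary~\ref{cor_ypf} and Proposition~\ref{prop_Cn_product}: for each $k\in\{0,\dots,n\}$ I would expand the $k$-th derivative of the product, bound it uniformly by a constant times $r(y)^{\alpha+m}$ (resp. $r(y)^\alpha$), and then take the maximum over $k$ in the definition \eqref{Eq_Cnr_norm} of the norm. Since the resulting $k$-dependent constants will turn out to be increasing in $k$, the maximum is attained at $k=n$, which produces the stated bounds $(1+m)^n$ and $(1+|\kappa|)^n$.

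For part (i), I would write, by the Leibniz rule,
\[
\frac{d^k}{dy^k}\big(y^mf(y)\big)=\sum_{l=0}^k\binom{k}{l}\bigg(\prod_{j=0}^{l-1}(m-j)\bigg)y^{m-l}f^{(k-l)}(y),
\]
where the terms with $l>m$ vanish automatically because the product then contains the factor $m-m=0$. For the surviving terms $l\le m$ I use $|f^{(k-l)}(y)|\le\Vert f\Vert_{C^n(\mathbb{R},r^\alpha)}r(y)^\alpha$ together with the uniform estimate $|y|^{m-l}\le r(y)^{m-l}\le r(y)^m$, which is valid since $r(y)\ge 1$; moreover I estimate $\prod_{j=0}^{l-1}(m-j)\le m^l$. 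Hence each summand is bounded by $\binom{k}{l}m^l\Vert f\Vert r(y)^{\alpha+m}$, and the binomial formula gives $\sum_{l=0}^k\binom{k}{l}m^l=(1+m)^k$. Thus the $k$-th derivative is dominated by $(1+m)^k\Vert f\Vert r(y)^{\alpha+m}$, and taking the supremum over $y$ and the maximum over $k\le n$ yields \eqref{Eq_ymf_norm_estimate}.

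Part (ii) is handled by the same template but is cleaner, since differentiating $e^{i\kappa y}$ produces only the factor $(i\kappa)^l$ of modulus $|\kappa|^l$ while $|e^{i\kappa y}|=1$. The Leibniz expansion $\frac{d^k}{dy^k}(e^{i\kappa y}f(y))=\sum_{l=0}^k\binom{k}{l}(i\kappa)^le^{i\kappa y}f^{(k-l)}(y)$ is therefore bounded by $\sum_{l=0}^k\binom{k}{l}|\kappa|^l\Vert f\Vert r(y)^\alpha=(1+|\kappa|)^k\Vert f\Vert r(y)^\alpha$, and the maximum over $k\le n$ gives \eqref{Eq_expf_norm_estimate}. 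The only points requiring a little care are the vanishing of the high-order terms and the uniform replacement $|y|^{m-l}\le r(y)^m$ in part (i); apart from these bookkeeping details the argument is routine and I do not expect any genuine obstacle.
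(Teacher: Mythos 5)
Your proposal is correct and follows essentially the same argument as the paper's proof: a Leibniz expansion, the bounds $\prod_{j=0}^{l-1}(m-j)\leq m^l$ (the paper writes this coefficient as $\frac{m!}{(m-l)!}$ with the sum truncated at $\min\{k,m\}$, which is the same observation as your vanishing of the terms $l>m$), the estimate $|y|^{m-l}\leq r(y)^m$, and the binomial identity giving $(1+m)^k$ resp. $(1+|\kappa|)^k$, followed by the maximum over $k\leq n$.
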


\begin{proof}
(i)\;\;For every $k\in\{0,\dots,n\}$ the $k$-th derivative of $y^mf(y)$ can be estimated by
\begin{align*}
\Big|\frac{d^k}{dy^k}(y^mf(y))\Big|&=\bigg|\sum\limits_{l=0}^{\min\{k,m\}}{k\choose l}\frac{m!}{(m-l)!}y^{m-l}f^{(k-l)}(y)\bigg|\\
&\leq\sum\limits_{l=0}^{\min\{k,m\}}{k\choose l}m^l\Vert f\Vert_{C^n(\mathbb{R},r^\alpha)}r(y)^{\alpha+m-l} \\
&\leq\sum\limits_{l=0}^k{k\choose l}m^l\Vert f\Vert_{C^n(\mathbb{R},r^\alpha)}r(y)^{\alpha+m}\\
&=(1+m)^k\Vert f\Vert_{C^n(\mathbb{R},r^\alpha)}r(y)^{\alpha+m}.
\end{align*}
This inequality shows that $y^mf\in C^n(\mathbb{R},r^{\alpha+m})$, with norm bounded by \eqref{Eq_ymf_norm_estimate}. \medskip

\noindent (ii)\;\;For every $k\in\{0,\dots,n\}$ the $k$-th derivative of $e^{i\kappa y}f(y)$ can be estimated by
\begin{align*}
\Big|\frac{d^k}{dy^k}(e^{i\kappa y}f(y))\Big|&=\bigg|\sum\limits_{l=0}^k{k\choose l}(i\kappa)^le^{i\kappa y}f^{(k-l)}(y)\bigg| \\
&\leq\sum\limits_{l=0}^k{k\choose l}|\kappa|^l\Vert f\Vert_{C^n(\mathbb{R},r^\alpha)}r(y)^\alpha\\
&=(1+|\kappa|)^k\Vert f\Vert_{C^n(\mathbb{R},r^\alpha)}r(y)^\alpha.
\end{align*}
This inequality shows that $e^{i\kappa y}f\in C^n(\mathbb{R},r^\alpha)$, with norm bounded by \eqref{Eq_expf_norm_estimate}.
\end{proof}

\noindent Next, we show that the space $C^n(\mathbb{R},r^\alpha)$ is invariant with respect to the shift of the function.

\begin{lem}\label{lem_Cn_shifted}
Let $n\in\mathbb{N}_0$, $\alpha\geq 0$, and $f\in C^n(\mathbb{R},r^\alpha)$. Then for every fixed $x\in\mathbb{R}$ one has $f(\,\cdot\,+x)\in C^n(\mathbb{R},r^\alpha)$ and 
\begin{equation*}
\Vert f(\,\cdot\,+x)\Vert_{C^n(\mathbb{R},r^\alpha)}\leq(1+|x|)^\alpha\Vert f\Vert_{C^n(\mathbb{R},r^\alpha)}.
\end{equation*}
\end{lem}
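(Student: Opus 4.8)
The plan is to work directly with the definition of the norm \eqref{Eq_Cnr_norm}. Setting $g(y):=f(y+x)$, the chain rule gives $g^{(k)}(y)=f^{(k)}(y+x)$ for every $k\in\{0,\dots,n\}$, so $g\in C^n(\mathbb{R})$ is immediate from $f\in C^n(\mathbb{R})$. The only point that needs verification is that each derivative of $g$ satisfies the required polynomial bound with the claimed constant.

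First I would apply the definition of the norm to $f$ to obtain the pointwise estimate $|f^{(k)}(y+x)|\leq\Vert f\Vert_{C^n(\mathbb{R},r^\alpha)}\,r(y+x)^\alpha$. Dividing by $r(y)^\alpha$, the task reduces to controlling the quotient $r(y+x)^\alpha/r(y)^\alpha$ uniformly in $y\in\mathbb{R}$.

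The key step --- and the only genuine inequality in the argument --- is the submultiplicativity of the weight $r$, namely $r(y+x)\leq(1+|x|)\,r(y)$. This follows from the triangle inequality $|y+x|\leq|y|+|x|$ together with $|x|\,|y|\geq 0$, since
\begin{equation*}
1+|y+x|\leq 1+|x|+|y|\leq 1+|x|+|y|+|x|\,|y|=(1+|x|)(1+|y|).
\end{equation*}
As $\alpha\geq 0$ and $t\mapsto t^\alpha$ is nondecreasing on $[0,\infty)$, raising this inequality to the $\alpha$-th power yields $r(y+x)^\alpha\leq(1+|x|)^\alpha r(y)^\alpha$.

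Combining the two estimates gives $|g^{(k)}(y)|/r(y)^\alpha\leq(1+|x|)^\alpha\Vert f\Vert_{C^n(\mathbb{R},r^\alpha)}$ for every $k\in\{0,\dots,n\}$ and every $y\in\mathbb{R}$, which already shows $f(\,\cdot\,+x)\in C^n(\mathbb{R},r^\alpha)$. Taking the supremum over $y\in\mathbb{R}$ and the maximum over $k$ then produces the asserted norm bound. I do not expect any real obstacle here; the entire content is carried by the weight inequality $r(y+x)\leq(1+|x|)r(y)$, which is precisely what generates the factor $(1+|x|)^\alpha$ in the statement.
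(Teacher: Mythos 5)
Your proposal is correct and follows essentially the same route as the paper: differentiate the shifted function, apply the norm bound for $f$, and use the weight inequality $1+|y+x|\leq(1+|x|)(1+|y|)$, which the paper uses in the same place (merely without spelling out its one-line verification as you do). Nothing further to add.
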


\begin{proof}
For every $k\in\{0,\dots,n\}$ the $k$-th derivative of $f(\,\cdot\,+x)$ can be estimated by
\begin{equation*}
\Big|\frac{d^k}{dy^k}f(y+x)\Big|=|f^{(k)}(y+x)|\leq\Vert f\Vert_{C^n(\mathbb{R},r^\alpha)}(1+|y+x|)^\alpha\leq\Vert f\Vert_{C^n(\mathbb{R},r^\alpha)}(1+|x|)^\alpha(1+|y|)^\alpha.
\end{equation*}
This estimate shows that $f(\,\cdot\,+x)\in C^n(\mathbb{R},r^\alpha)$, with norm bounded by
\begin{equation*}
\Vert f(\,\cdot\,+x)\Vert_{C^n(\mathbb{R},r^\alpha)}\leq\Vert f\Vert_{C^n(\mathbb{R},r^\alpha)}(1+|x|)^\alpha. \qedhere
\end{equation*}
\end{proof}

The following lemma shows, that the function space $C_\alpha^n([b,\infty))$ from Definition~\ref{defi_Cnalpha} is indeed in some sense larger then the weighted $C^n$-spaces $C^n(\mathbb{R},r^\alpha)$ from Definition~\ref{defi_Cnr}.

\begin{lem}\label{lem_Cnr_Cnalpha}
Let $n\in\mathbb{N}_0$, $\alpha\geq 0$, and $f\in C^n(\mathbb{R},r^\alpha)$. Then for $b>0$ one has $f|_{[b,\infty)}\in C^n_\alpha([b,\infty))$ and
\begin{equation*}
\Vert f|_{[b,\infty)}\Vert_{C^n_\alpha([b,\infty))}\leq\Big(1+\frac{1}{b}\Big)^{n+\alpha}\Vert f\Vert_{C^n(\mathbb{R},r^\alpha)}.
\end{equation*}
\end{lem}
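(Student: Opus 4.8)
The plan is to transfer the uniform derivative bound that defines membership in $C^n(\mathbb{R},r^\alpha)$ directly into the weaker pointwise-at-$b$ and highest-derivative bounds required by $C_\alpha^n([b,\infty))$, and then to reassemble the resulting estimates into the claimed norm inequality. The starting point is that, by Definition~\ref{defi_Cnr}, every derivative satisfies $|f^{(k)}(y)|\le\Vert f\Vert_{C^n(\mathbb{R},r^\alpha)}(1+|y|)^\alpha$ for all $k\in\{0,\dots,n\}$; restricting to $y\ge b>0$ we have $r(y)=1+y$. The one elementary inequality I would isolate at the outset is that for $y\ge b$ one has $(1+y)^\alpha=y^\alpha(1+\tfrac{1}{y})^\alpha\le y^\alpha(1+\tfrac{1}{b})^\alpha$, i.e.\ on the half-line the weight $(1+y)^\alpha$ is dominated by $y^\alpha$ up to the constant $(1+\tfrac1b)^\alpha$.

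Applying this with $k=n$ immediately gives $|f^{(n)}(y)|\le\Vert f\Vert_{C^n(\mathbb{R},r^\alpha)}(1+\tfrac1b)^\alpha y^\alpha$ for $y\ge b$, which both shows $f|_{[b,\infty)}\in C_\alpha^n([b,\infty))$ in the sense of Definition~\ref{defi_Cnalpha} and bounds the supremum term of the norm \eqref{Eq_Cnalpha_Norm} by $\Vert f\Vert_{C^n(\mathbb{R},r^\alpha)}(1+\tfrac1b)^\alpha$. It then remains to estimate the boundary terms $\sum_{k=0}^{n-1}|f^{(k)}(b)|/b^{n-k+\alpha}$. For these I would use $|f^{(k)}(b)|\le\Vert f\Vert_{C^n(\mathbb{R},r^\alpha)}(1+b)^\alpha$ together with the algebraic identity $(1+b)^\alpha/b^{n-k+\alpha}=(1+\tfrac1b)^\alpha b^{-(n-k)}$, so that after the substitution $j=n-k$ the entire sum equals $\Vert f\Vert_{C^n(\mathbb{R},r^\alpha)}(1+\tfrac1b)^\alpha\sum_{j=1}^n b^{-j}$.

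Adding the supremum term and the boundary sum, the norm is bounded by $\Vert f\Vert_{C^n(\mathbb{R},r^\alpha)}(1+\tfrac1b)^\alpha\sum_{j=0}^n b^{-j}$, where the $j=0$ summand absorbs the supremum contribution. The last step, and the only place where any care is needed to land on the exact exponent $n+\alpha$, is the observation that $\sum_{j=0}^n b^{-j}\le(1+\tfrac1b)^n$, which follows from the binomial expansion $(1+\tfrac1b)^n=\sum_{j=0}^n\binom{n}{j}b^{-j}$ together with $\binom{n}{j}\ge 1$. Multiplying the two factors $(1+\tfrac1b)^\alpha$ and $(1+\tfrac1b)^n$ then yields $(1+\tfrac1b)^{n+\alpha}$, the claimed constant. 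I do not anticipate a genuine obstacle: the statement is essentially a change of weight from $(1+|y|)^\alpha$ to $y^\alpha$ on a half-line bounded away from the origin, and the only bookkeeping subtlety is to route the powers $b^{-(n-k)}$ coming from the normalizing denominators through the binomial bound so that the exponents of $(1+\tfrac1b)$ combine cleanly.
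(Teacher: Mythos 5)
Your proposal is correct and follows essentially the same route as the paper: the same weight-change inequality $1+y\le(1+\tfrac1b)y$ on $[b,\infty)$, the same separate treatment of the supremum term and the boundary sum, and the same binomial bound $\sum_{j=0}^n b^{-j}\le\bigl(1+\tfrac1b\bigr)^n$ to produce the constant $\bigl(1+\tfrac1b\bigr)^{n+\alpha}$. No gaps; the argument matches the paper's proof step for step.
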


\begin{proof}
For every $k\in\{0,\dots,n\}$ the $k$-th derivative of $f$ can be estimated by
\begin{equation*}
|f^{(k)}(y)|\leq\Vert f\Vert_{C^n(\mathbb{R},r^\alpha)}(1+y)^\alpha\leq\Vert f\Vert_{C^n(\mathbb{R},r^\alpha)}\Big(1+\frac{1}{b}\Big)^\alpha y^\alpha,\qquad y\in[b,\infty),
\end{equation*}
where $1+y\leq(1+\frac{1}{b})y$ was used for every $y\geq b$. This estimate shows that $f|_{[b,\infty)}\in C^n_\alpha([b,\infty))$, with norm bounded by
\begin{align*}
\Vert f|_{[b,\infty)}\Vert_{C_\alpha^n([b,\infty))}&=\sum\limits_{k=0}^{n-1}\frac{|f^{(k)}(b)|}{b^{n-k+\alpha}}+\sup\limits_{y\in[b,\infty)}\frac{|f^{(n)}(y)|}{y^\alpha} \\
&\leq\Vert f\Vert_{C^n(\mathbb{R},r^\alpha)}\Big(1+\frac{1}{b}\Big)^\alpha\bigg(\sum\limits_{k=0}^{n-1}\frac{1}{b^{n-k}}+1\bigg)\\
&\leq\Vert f\Vert_{C^n(\mathbb{R},r^\alpha)}\Big(1+\frac{1}{b}\Big)^{n+\alpha},
\end{align*}
where in the last inequality we used
\begin{equation*}
\sum\limits_{k=0}^{n-1}\frac{1}{b^{n-k}}+1=\sum_{k=0}^n\frac{1}{b^{n-k}}\leq\sum_{k=0}^n{n\choose k}\frac{1}{b^{n-k}}=\Big(1+\frac{1}{b}\Big)^n. \qedhere
\end{equation*}
\end{proof}

\section{Oscillatory integrals}\label{sec_Oscillatory_integrals}

\noindent In this section we want to give meaning to oscillatory integrals of the form
\begin{equation}\label{Eq_Integral_not_L1}
\int_b^\infty e^{iay^2}f(y)dy,
\end{equation}
where $a\in\mathbb{R}\setminus\{0\}$, $b>0$, and the function $f:[b,\infty)\to\mathbb{C}$ is not necessarily (absolutely) integrable on $[b,\infty)$. The idea is to insert the Gaussian regularizer $e^{-\varepsilon(y-y_0)^2}$, $\varepsilon>0$, $y_0\in\mathbb{R}$, into \eqref{Eq_Integral_not_L1} and consider the regularized Lebesgue integral
\begin{equation}\label{Eq_Integral_epsilon}
I_{a,b}^{\varepsilon,y_0}(f):=\int_b^\infty e^{-\varepsilon(y-y_0)^2}e^{iay^2}f(y)dy.
\end{equation}
The oscillatory integral \eqref{Eq_Integral_not_L1} can now be defined as the limit $\varepsilon\to 0^+$ of the absolutely convergent integrals \eqref{Eq_Integral_epsilon}
\begin{equation}\label{Eq_Iab_formal}
\mathcal{I}_{a,b}(f):=\lim\limits_{\varepsilon\to 0^+}I_{a,b}^{\varepsilon,y_0}(f)=\lim\limits_{\varepsilon\to 0^+}\int_b^\infty e^{-\varepsilon(y-y_0)^2}e^{iay^2}f(y)dy,
\end{equation}
for functions $f$ for which this limit exists and is independent of the chosen $y_0\in\mathbb{R}$. We emphasize that in the following the notation $I_{a,b}^{\varepsilon,y_0}(f)$ is used for the standard Lebesgue integral \eqref{Eq_Integral_epsilon}, whereas we shall use the calligraphic symbol $\mathcal{I}_{a,b}(f)$ for the limit in \eqref{Eq_Iab_formal} which, in general, does not exist as an absolute convergent Lebesgue integral.

It will be shown in Theorem~\ref{thm_Integral} that for functions in the space $C_\alpha^n([b,\infty))$ of Definition~\ref{defi_Cnalpha} with $n>\alpha+1$, the limit \eqref{Eq_Iab_formal} exists. The key idea of the convergence of these oscillatory integrals is presented in the following Lemma~\ref{lem_Integration_by_parts}. Using integration by parts we will derive the identity \eqref{Eq_Integration_by_parts} for the regularized integral $I_{a,b}^{\varepsilon,y_0}$, which then will serve as a first step for the more complicated formula \eqref{Eq_Integral_formula}. We also refer to, e.g., \cite[Theorem 7.7.1]{H03}, \cite[Lemma 0.4.7, Lemma 1.1.2]{S17}, and \cite[Proposition 1 on page 331]{S93}, where similar iterative integration by parts techniques were used.

\begin{lem}\label{lem_Integration_by_parts}
Let $a\in\mathbb{R}\setminus\{0\}$, $b>0$, $\varepsilon>0$, and $y_0\in\mathbb{R}$. Then for every $f\in C^1_\alpha([b,\infty))$, $\alpha\geq 0$, and $\kappa\in\mathbb{N}_0$ 
one has 
\begin{equation}\label{Eq_Integration_by_parts}
I_{a,b}^{\varepsilon,y_0}\Big(\frac{f}{y_\varepsilon^\kappa}\Big)=\frac{-1}{2(ia-\varepsilon)}\bigg(\frac{e^{iab^2-\varepsilon(b-y_0)^2}f(b)}{b_\varepsilon^{\kappa+1}}-(\kappa+1)I_{a,b}^{\varepsilon,y_0}\Big(\frac{f}{y_\varepsilon^{\kappa+2}}\Big)+I_{a,b}^{\varepsilon,y_0}\Big(\frac{f'}{y_\varepsilon^{\kappa+1}}\Big)\bigg),
\end{equation}
where  $y_\varepsilon:=y+\frac{\varepsilon y_0}{ia-\varepsilon}$ and $b_\varepsilon:=b+\frac{\varepsilon y_0}{ia-\varepsilon}$.
\end{lem}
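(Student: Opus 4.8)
The plan is to collapse the two exponential factors into a single Gaussian in the shifted variable $y_\varepsilon$ and then integrate by parts exactly once. First I would complete the square: since
\[
-\varepsilon(y-y_0)^2+iay^2=(ia-\varepsilon)y^2+2\varepsilon y_0y-\varepsilon y_0^2,
\]
completing the square with respect to the leading coefficient $ia-\varepsilon$ yields $-\varepsilon(y-y_0)^2+iay^2=(ia-\varepsilon)y_\varepsilon^2+C$ with $y_\varepsilon=y+\frac{\varepsilon y_0}{ia-\varepsilon}$ as in the statement and a constant $C$ independent of $y$. Thus $e^{-\varepsilon(y-y_0)^2}e^{iay^2}=e^Ce^{(ia-\varepsilon)y_\varepsilon^2}$, and because $\frac{dy_\varepsilon}{dy}=1$ the decisive identity
\[
\frac{d}{dy}e^{(ia-\varepsilon)y_\varepsilon^2}=2(ia-\varepsilon)y_\varepsilon e^{(ia-\varepsilon)y_\varepsilon^2}
\]
holds; this is precisely why the shift $y_\varepsilon$ is introduced.

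Next I would rewrite the integrand of $I_{a,b}^{\varepsilon,y_0}(f/y_\varepsilon^\kappa)$ by extracting one factor of $y_\varepsilon$, using
\[
e^{(ia-\varepsilon)y_\varepsilon^2}\frac{f(y)}{y_\varepsilon^\kappa}=\frac{1}{2(ia-\varepsilon)}\frac{f(y)}{y_\varepsilon^{\kappa+1}}\frac{d}{dy}e^{(ia-\varepsilon)y_\varepsilon^2},
\]
and then integrate by parts with $u=f/y_\varepsilon^{\kappa+1}$ and $dv=(\frac{d}{dy}e^{(ia-\varepsilon)y_\varepsilon^2})\,dy$. The quotient rule gives $\frac{d}{dy}(f/y_\varepsilon^{\kappa+1})=f'/y_\varepsilon^{\kappa+1}-(\kappa+1)f/y_\varepsilon^{\kappa+2}$, so that after reinserting the constant $e^C$ the two integral contributions become exactly $I_{a,b}^{\varepsilon,y_0}(f'/y_\varepsilon^{\kappa+1})$ and $(\kappa+1)I_{a,b}^{\varepsilon,y_0}(f/y_\varepsilon^{\kappa+2})$, while the boundary term at $y=b$ is $-f(b)b_\varepsilon^{-(\kappa+1)}e^{(ia-\varepsilon)b_\varepsilon^2}$, and $e^Ce^{(ia-\varepsilon)b_\varepsilon^2}=e^{iab^2-\varepsilon(b-y_0)^2}$ by evaluating the completed square at $y=b$. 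Collecting signs then reproduces \eqref{Eq_Integration_by_parts}.

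I would preface the computation with two well-definedness remarks. Since $\kappa\in\mathbb{N}_0$ the powers $y_\varepsilon^\kappa$ are ordinary integer powers, so no branch of the complex power function is involved; and $y_\varepsilon$ does not vanish on $[b,\infty)$, as its imaginary part equals $\frac{-\varepsilon ay_0}{a^2+\varepsilon^2}\neq0$ when $y_0\neq0$, whereas for $y_0=0$ one has $y_\varepsilon=y\geq b>0$. Together with the polynomial bounds on $|f|$ and $|f'|$ coming from \eqref{Eq_fk_estimate} in Proposition~\ref{prop_f_derivatives_estimate}, the Gaussian factor $e^{-\varepsilon(y-y_0)^2}$ makes all four integrals absolutely convergent, so the integration by parts is legitimate.

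The step needing the most care is the vanishing of the boundary term as $y\to\infty$. Here the clean observation is that $|e^Ce^{(ia-\varepsilon)y_\varepsilon^2}|=|e^{-\varepsilon(y-y_0)^2}e^{iay^2}|=e^{-\varepsilon(y-y_0)^2}$, so the boundary expression has modulus $\frac{|f(y)|}{|y_\varepsilon|^{\kappa+1}}e^{-\varepsilon(y-y_0)^2}$, and the Gaussian decay dominates the at-most-polynomial growth of $|f(y)|$ guaranteed by \eqref{Eq_fk_estimate}; this is where the polynomial growth bound from the $C^1_\alpha$-membership of $f$ genuinely enters. Everything else is routine algebra with the completed square.
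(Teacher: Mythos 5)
Your proof is correct and follows essentially the same route as the paper: a single integration by parts exploiting the identity $\frac{d}{dy}e^{iay^2-\varepsilon(y-y_0)^2}=2(ia-\varepsilon)\,y_\varepsilon\,e^{iay^2-\varepsilon(y-y_0)^2}$, the quotient rule for $f/y_\varepsilon^{\kappa+1}$, and the Gaussian-beats-polynomial estimate via \eqref{Eq_fk_estimate} to kill the boundary term at infinity. Your completed-square rewriting $e^{C}e^{(ia-\varepsilon)y_\varepsilon^2}$ and the explicit checks that $y_\varepsilon\neq 0$ and that all integrals converge absolutely are only presentational additions to the same argument.
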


Formula \eqref{Eq_Integration_by_parts} shows, that instead of $\frac{f}{y_\varepsilon^\kappa}$, one can rather integrate the functions $\frac{f}{y_\varepsilon^{\kappa+2}}$ and $\frac{f'}{y_\varepsilon^{\kappa+1}}$. This procedure is illustrated in the following diagram. \medskip

\begin{center}
\begin{tikzpicture}
\draw (0,0) node[anchor=center] {$I_{a,b}^{\varepsilon,y_0}\big(\frac{f}{y_\varepsilon^\kappa}\big)$};
\draw[thick,->] (0,-0.3)--(0,-0.7);
\draw (0,-1.1) node[anchor=center] {$I_{a,b}^{\varepsilon,y_0}\big(\frac{f}{y_\varepsilon^{\kappa+2}}\big)$};
\draw[thick,->] (1,0)--(1.5,0);
\draw (2.7,0) node[anchor=center] {$I_{a,b}^{\varepsilon,y_0}\big(\frac{f'}{y_\varepsilon^{\kappa+1}}\big)$};
\end{tikzpicture}
\end{center}

\begin{proof}[Proof of Lemma~\ref{lem_Integration_by_parts}]
Using integration by parts we can rewrite the integral \eqref{Eq_Integral_epsilon} as
\begin{align}
I_{a,b}^{\varepsilon,y_0}\Big(\frac{f}{y_\varepsilon^\kappa}\Big)&=\int_b^\infty e^{iay^2-\varepsilon(y-y_0)^2}\frac{f(y)}{y_\varepsilon^\kappa}dy \notag \\
&=\frac{1}{2(ia-\varepsilon)}\int_b^\infty\frac{d}{dy}\big(e^{iay^2-\varepsilon(y-y_0)^2}\big)\frac{f(y)}{y_\varepsilon^{\kappa+1}}dy \label{Eq_Integration_by_parts_1} \\
&=\frac{1}{2(ia-\varepsilon)}\bigg(e^{iay^2-\varepsilon(y-y_0)^2}\frac{f(y)}{y_\varepsilon^{\kappa+1}}\bigg|_{y=b}^\infty-\int_b^\infty e^{iay^2-\varepsilon(y-y_0)^2}\frac{d}{dy}\Big(\frac{f(y)}{y_\varepsilon^{\kappa+1}}\Big)dy\bigg). \notag
\end{align}
Since $f\in C_\alpha^1([b,\infty))$ the boundary term satisfies
\begin{equation*}
\Big|e^{iay^2-\varepsilon(y-y_0)^2}\frac{f(y)}{y_\varepsilon^{\kappa+1}}\Big|\leq e^{-\varepsilon(y-y_0)^2}\frac{\Vert f\Vert_{C_\alpha^1([b,\infty))}y^{1+\alpha}}{|y_\varepsilon|^{\kappa+1}}\to 0\quad\text{as}\,\,y\to\infty,
\end{equation*}
where also \eqref{Eq_fk_estimate} was used. Therefore, \eqref{Eq_Integration_by_parts_1} simplifies to 
\begin{align*}
I_{a,b}^{\varepsilon,y_0}\Big(\frac{f}{y_\varepsilon^\kappa}\Big)&=\frac{-1}{2(ia-\varepsilon)}\bigg(e^{iab^2-\varepsilon(b-y_0)^2}\frac{f(b)}{b_\varepsilon^{\kappa+1}}+\int_b^\infty e^{iay^2-\varepsilon(y-y_0)^2}\frac{d}{dy}\Big(\frac{f(y)}{y_\varepsilon^{\kappa+1}}\Big)dy\bigg) \\
&=\frac{-1}{2(ia-\varepsilon)}\bigg(e^{iab^2-\varepsilon(b-y_0)^2}\frac{f(b)}{b_\varepsilon^{\kappa+1}}-(\kappa+1)I_{a,b}^{\varepsilon,y_0}\Big(\frac{f}{y_\varepsilon^{\kappa+2}}\Big)+I_{a,b}^{\varepsilon,y_0}\Big(\frac{f'}{y_\varepsilon^{\kappa+1}}\Big)\bigg). \qedhere
\end{align*}
\end{proof}

Next, we repeatedly apply \eqref{Eq_Integration_by_parts} to gain multiple powers of $\frac{1}{y_\varepsilon}$, which serve as regularizers of the integrand at infinity. The formula \eqref{Eq_Integral_formula} below is the crucial ingredient in Theorem~\ref{thm_Integral} to define the oscillatory integral \eqref{Eq_Integral_not_L1} as a limit via \eqref{Eq_Iab_formal}. The following diagram provides a schematic illustration of how \eqref{Eq_Integration_by_parts} is repeatedly applied: Instead of the integral $I_{a,b}^{\varepsilon,y_0}(f)$ in the top left corner, one can rather compute the $n$ integrals in the bottom row and the $m+1$ integrals in the right column. \medskip

\begin{center}
\begin{tikzpicture}
\draw (0,0) node[anchor=center] {$I_{a,b}^{\varepsilon,y_0}(f)$};
\draw[thick,->] (0,-0.3)--(0,-0.7);
\draw (0,-1.1) node[anchor=center] {$I_{a,b}^{\varepsilon,y_0}\big(\frac{f}{y_\varepsilon^2}\big)$};
\draw[thick,->] (0,-1.4)--(0,-1.8);
\draw (0,-2) node[anchor=center] {$\vdots$};
\draw[thick,->] (0,-2.4)--(0,-2.8);
\draw (0,-3.2) node[anchor=center] {$I_{a,b}^{\varepsilon,y_0}\big(\frac{f}{y_\varepsilon^{2m}}\big)$};
\draw[thick,->] (0,-3.55)--(0,-3.95);
\draw (0,-4.3) node[anchor=center] {$I_{a,b}^{\varepsilon,y_0}\big(\frac{f}{y_\varepsilon^{2+2m}}\big)$};
\draw[thick,->] (1,0)--(1.5,0);
\draw[thick,->] (1,-1.1)--(1.5,-1.1);
\draw[thick,->] (1,-3.2)--(1.5,-3.2);
\draw (2.7,0) node[anchor=center] {$I_{a,b}^{\varepsilon,y_0}\big(\frac{f'}{y_\varepsilon}\big)$};
\draw[thick,->] (2.7,-0.3)--(2.7,-0.7);
\draw (2.7,-1.1) node[anchor=center] {$I_{a,b}^{\varepsilon,y_0}\big(\frac{f'}{y_\varepsilon^3}\big)$};
\draw[thick,->] (2.7,-1.4)--(2.7,-1.8);
\draw (2.7,-2) node[anchor=center] {$\vdots$};
\draw[thick,->] (2.7,-2.4)--(2.7,-2.8);
\draw (2.7,-3.2) node[anchor=center] {$I_{a,b}^{\varepsilon,y_0}\big(\frac{f'}{y_\varepsilon^{1+2m}}\big)$};
\draw[thick,->] (2.7,-3.55)--(2.7,-3.95);
\draw (2.7,-4.3) node[anchor=center] {$I_{a,b}^{\varepsilon,y_0}\big(\frac{f'}{y_\varepsilon^{3+2m}}\big)$};
\draw[thick,->] (3.9,0)--(4.4,0);
\draw[thick,->] (3.9,-1.1)--(4.4,-1.1);
\draw[thick,->] (3.9,-3.2)--(4.4,-3.2);
\draw (4.8,0) node[anchor=center] {$\dots$};
\draw (4.8,-1.1) node[anchor=center] {$\dots$};
\draw (4.8,-3.2) node[anchor=center] {$\dots$};
\draw (4.8,-4.3) node[anchor=center] {$\dots$};
\draw[thick,->] (5.1,0)--(5.6,0);
\draw[thick,->] (5.1,-1.1)--(5.6,-1.1);
\draw[thick,->] (5.1,-3.2)--(5.6,-3.2);
\draw (6.95,0) node[anchor=center] {$I_{a,b}^{\varepsilon,y_0}\big(\frac{f^{(n-1)}}{y_\varepsilon^{n-1}}\big)$};
\draw[thick,->] (6.95,-0.35)--(6.95,-0.7);
\draw (6.95,-1.1) node[anchor=center] {$I_{a,b}^{\varepsilon,y_0}\big(\frac{f^{(n-1)}}{y_\varepsilon^{n+1}}\big)$};
\draw[thick,->] (6.95,-1.45)--(6.95,-1.8);
\draw (6.95,-2) node[anchor=center] {$\vdots$};
\draw[thick,->] (6.95,-2.4)--(6.95,-2.8);
\draw (6.95,-3.2) node[anchor=center] {$I_{a,b}^{\varepsilon,y_0}\big(\frac{f^{(n-1)}}{y_\varepsilon^{n-1+2m}}\big)$};
\draw[thick,->] (6.95,-3.55)--(6.95,-3.95);
\draw (6.95,-4.3) node[anchor=center] {$I_{a,b}^{\varepsilon,y_0}\big(\frac{f^{(n-1)}}{y_\varepsilon^{n+1+2m}}\big)$};
\draw[thick,->] (8.25,0)--(8.75,0);
\draw[thick,->] (8.25,-1.1)--(8.75,-1.1);
\draw[thick,->] (8.25,-3.2)--(8.75,-3.2);
\draw (9.95,0) node[anchor=center] {$I_{a,b}^{\varepsilon,y_0}\big(\frac{f^{(n)}}{y_\varepsilon^n}\big)$};
\draw (9.95,-1.1) node[anchor=center] {$I_{a,b}^{\varepsilon,y_0}\big(\frac{f^{(n)}}{y_\varepsilon^{n+2}}\big)$};
\draw (9.95,-2) node[anchor=center] {$\vdots$};
\draw (9.95,-3.2) node[anchor=center] {$I_{a,b}^{\varepsilon,y_0}\big(\frac{f^{(n)}}{y_\varepsilon^{n+2m}}\big)$};
\end{tikzpicture}
\end{center}

\begin{lem}
Let $a\in\mathbb{R}\setminus\{0\}$, $b>0$, $\varepsilon>0$, $y_0\in\mathbb{R}$, $m\in\mathbb{N}_0$, and $n\in\mathbb{N}$. Then for every $f\in C^n_\alpha([b,\infty))$, $\alpha\geq 0$, one has 
\begin{equation}\label{Eq_Integral_formula}
\begin{split}
I_{a,b}^{\varepsilon,y_0}(f)&=\sum\limits_{k=0}^{n-1}\sum\limits_{l=0}^m\frac{C_{k,l}e^{iab^2-\varepsilon(b-y_0)^2}f^{(k)}(b)}{(ia-\varepsilon)^{k+1+l}b_\varepsilon^{k+1+2l}}+\sum\limits_{l=0}^m\frac{C_{n-1,l}}{(ia-\varepsilon)^{n+l}}I_{a,b}^{\varepsilon,y_0}\Big(\frac{f^{(n)}}{y_\varepsilon^{n+2l}}\Big)  \\
&\quad-\sum\limits_{k=0}^{n-1}\frac{(k+1+2m)C_{k,m}}{(ia-\varepsilon)^{k+1+m}}I_{a,b}^{\varepsilon,y_0}\Big(\frac{f^{(k)}}{y_\varepsilon^{k+2+2m}}\Big), 
\end{split}
\end{equation}
where $y_\varepsilon:=y+\frac{\varepsilon y_0}{ia-\varepsilon}$ and $b_\varepsilon:=b+\frac{\varepsilon y_0}{ia-\varepsilon}$, and the coefficients $C_{k,l}\in\mathbb{R}$ are recursively given by
\begin{equation}\label{Eq_Coefficients}
C_{k,0}:=\Big(-\frac{1}{2}\Big)^{k+1}\qquad\text{and}\qquad C_{k,l+1}:=\sum\limits_{i=0}^k\frac{(-1)^{i+k}(i+1+2l)}{2^{k+1-i}}C_{i,l}.
\end{equation}
\end{lem}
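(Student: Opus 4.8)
The plan is to read the formula \eqref{Eq_Integral_formula} off the grid drawn before it and to obtain it by iterating the integration by parts identity \eqref{Eq_Integration_by_parts} of Lemma~\ref{lem_Integration_by_parts}. Writing $T(k,l):=I_{a,b}^{\varepsilon,y_0}(f^{(k)}/y_\varepsilon^{k+2l})$ for the entry in column $k\in\{0,\dots,n\}$ and row $l\in\mathbb{N}_0$ of that grid, I would first record that applying \eqref{Eq_Integration_by_parts} to $T(k,l)$ with $\kappa=k+2l$ (so $\kappa+1=k+2l+1$) produces a boundary term together with a horizontal neighbour and a vertical neighbour, namely $T(k,l)=\frac{-1}{2(ia-\varepsilon)}\big(\mathrm{bdry}_{k,l}-(k+2l+1)T(k,l+1)+T(k+1,l)\big)$. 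Since the Gaussian $e^{-\varepsilon(y-y_0)^2}$ suppresses any polynomial growth, and each $f^{(k)}$ with $k\le n-1$ is polynomially bounded together with its derivative by Proposition~\ref{prop_f_derivatives_estimate}, Lemma~\ref{lem_Integration_by_parts} is applicable (with a suitable growth exponent) at every interior grid point, so each such reduction is legitimate. I would then prove \eqref{Eq_Integral_formula} by induction on $m$.

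For the base case $m=0$ I would carry out the pure horizontal sweep along the top row, starting from $T(0,0)=I_{a,b}^{\varepsilon,y_0}(f)$ and repeatedly reducing the highest surviving derivative term; this is an induction on $n$ whose essential point is that the coefficient in front of the terminal term halves and changes sign at each column, i.e. $C_{k,0}=-\tfrac12 C_{k-1,0}$, which is exactly $C_{k,0}=(-\tfrac12)^{k+1}$ from \eqref{Eq_Coefficients}. For the step $m\to m+1$ I would keep the already-finalized boundary terms of rows $0,\dots,m$ and the right-column entries $T(n,l)$, $l\le m$, untouched, and expand only the current bottom row $T(k,m+1)$, $k=0,\dots,n-1$ (each carrying the coefficient $-\tfrac{(k+1+2m)C_{k,m}}{(ia-\varepsilon)^{k+1+m}}$) by an analogous horizontal sweep at depth $m+1$. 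Each such sweep emits new boundary terms in row $m+1$, feeds the right column through its terminal term $T(n,m+1)$, and pushes the new bottom row down to the terms $T(k',m+2)$, $k'\ge k$. Re-collecting all contributions to a fixed entry then assembles the three sums of \eqref{Eq_Integral_formula} at level $m+1$.

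The main obstacle is the coefficient bookkeeping in this last step: one must check that the recombination reproduces the recursion \eqref{Eq_Coefficients}. Concretely, the new bottom-row term $T(k,m+2)$ is fed by the sweeps of $T(i,m+1)$ for all $i\le k$; travelling from column $i$ to $k$ costs $k-i$ horizontal steps (each a factor $\tfrac{-1}{2(ia-\varepsilon)}$) followed by one downward step at column $k$ (an extra factor $\tfrac{-1}{2(ia-\varepsilon)}\cdot(-(k+2m+3))$). Summing these geometric contributions and cancelling the common factor $\tfrac{k+3+2m}{(ia-\varepsilon)^{k+m+2}}$ reduces the required identity to $C_{k,m+1}=\sum_{i=0}^k\frac{(-1)^{i+k}(i+1+2m)}{2^{k+1-i}}C_{i,m}$, which is precisely \eqref{Eq_Coefficients} with $l=m$. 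The same computation, now terminating at column $n$ (respectively producing the boundary at column $k$), shows that the right-column coefficient equals $\frac{C_{n-1,m+1}}{(ia-\varepsilon)^{n+m+1}}$ and the boundary coefficient equals $\frac{C_{k,m+1}}{(ia-\varepsilon)^{k+m+2}}$, which closes the induction. Care is needed only with the signs and with the index shift $\kappa=k+2l$ governing the downward factors.
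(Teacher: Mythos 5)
Your proposal is correct, and it is built from the same ingredients as the paper's proof: the single integration-by-parts identity \eqref{Eq_Integration_by_parts} iterated over the grid of terms $T(k,l)=I_{a,b}^{\varepsilon,y_0}\big(f^{(k)}/y_\varepsilon^{k+2l}\big)$, an outer induction on $m$ whose base case $m=0$ is a horizontal sweep (an induction on $n$ using $C_{k,0}=-\tfrac{1}{2}C_{k-1,0}$), and Proposition~\ref{prop_f_derivatives_estimate} to make each $f^{(k)}$, $k\leq n-1$, admissible in Lemma~\ref{lem_Integration_by_parts}. Where you genuinely differ is the organization of the step $m\to m+1$. The paper makes a single left-to-right pass over the bottom row, expanding each entry exactly once with an accumulated coefficient; this forces it to formulate the hybrid intermediate identity \eqref{Eq_Integral_formula_1}, prove it by an inner induction on the column index $K$, and the coefficient relations it needs are the two-term unrollings $C_{0,m+1}=\tfrac{1+2m}{2}C_{0,m}$ and $C_{K,m+1}-(K+2+2m)C_{K+1,m}=-2C_{K+1,m+1}$ of the recursion. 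You instead expand every bottom-row entry independently by a full sweep to the right edge and re-collect contributions by linearity; this sum-over-paths bookkeeping needs no intermediate invariant and makes the summed recursion \eqref{Eq_Coefficients} appear verbatim. Your coefficient checks are right: the contribution to $T(k,m+2)$ carries $\big(\tfrac{-1}{2(ia-\varepsilon)}\big)^{k-i}$ from the horizontal steps and $\tfrac{k+3+2m}{2(ia-\varepsilon)}$ from the downward step, which after cancelling $-\tfrac{k+3+2m}{(ia-\varepsilon)^{k+m+2}}$ is exactly \eqref{Eq_Coefficients} with $l=m$, and the terminal and boundary contributions likewise assemble to $\tfrac{C_{n-1,m+1}}{(ia-\varepsilon)^{n+m+1}}$ and $C_{k,m+1}$. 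The trade-off: the paper's sequential pass touches each grid entry once and only ever verifies a two-term identity, at the price of the auxiliary formula \eqref{Eq_Integral_formula_1}; your version explains transparently why the recursion has its summed form, at the price of re-expanding entries of row $m+1$ inside several sweeps, so for full rigor the ``analogous horizontal sweep'' and the final re-collection should be written out as a finite, purely mechanical induction. There is no gap in substance.
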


\begin{proof}
{\it Step 1.} We fix $m=0$ and prove the formula \eqref{Eq_Integral_formula} inductively for $n\in\mathbb{N}$. For the induction start $n=1$, the formula \eqref{Eq_Integral_formula} reduces to
\begin{equation*}
I_{a,b}^{\varepsilon,y_0}(f)=e^{iab^2-\varepsilon(b-y_0)^2}\frac{C_{0,0}f(b)}{(ia-\varepsilon)b_\varepsilon}+\frac{C_{0,0}}{ia-\varepsilon}I_{a,b}^{\varepsilon,y_0}\Big(\frac{f'}{y_\varepsilon}\Big)-\frac{C_{0,0}}{ia-\varepsilon}I_{a,b}^{\varepsilon,y_0}\Big(\frac{f}{y_\varepsilon^2}\Big),
\end{equation*}
and holds by \eqref{Eq_Integration_by_parts} with $\kappa=0$ and the choice $C_{0,0}=-\frac{1}{2}$. For the induction step $n\to n+1$, we know by \eqref{Eq_Integral_formula} with $m=0$, that
\begin{equation*}
I_{a,b}^{\varepsilon,y_0}(f)=\sum\limits_{k=0}^{n-1}\frac{C_{k,0}e^{iab^2-\varepsilon(b-y_0)^2}f^{(k)}(b)}{(ia-\varepsilon)^{k+1}b_\varepsilon^{k+1}}+\frac{C_{n-1,0}}{(ia-\varepsilon)^n}I_{a,b}^{\varepsilon,y_0}\Big(\frac{f^{(n)}}{y_\varepsilon^n}\Big)-\sum\limits_{k=0}^{n-1}\frac{(k+1)C_{k,0}}{(ia-\varepsilon)^{k+1}}I_{a,b}^{\varepsilon,y_0}\Big(\frac{f^{(k)}}{y_\varepsilon^{k+2}}\Big).
\end{equation*}
Using \eqref{Eq_Integration_by_parts} with $f^{(n)}$ and $\kappa=n$, turns this formula into
\begin{align*}
I_{a,b}^{\varepsilon,y_0}(f)&=\sum\limits_{k=0}^{n-1}\frac{C_{k,0}e^{iab^2-\varepsilon(b-y_0)^2}f^{(k)}(b)}{(ia-\varepsilon)^{k+1}b_\varepsilon^{k+1}}-\sum\limits_{k=0}^{n-1}\frac{(k+1)C_{k,0}}{(ia-\varepsilon)^{k+1}}I_{a,b}^{\varepsilon,y_0}\Big(\frac{f^{(k)}}{y_\varepsilon^{k+2}}\Big) \\
&\quad-\frac{C_{n-1,0}}{2(ia-\varepsilon)^{n+1}}\bigg(\frac{e^{iab^2-\varepsilon(b-y_0)^2}f^{(n)}(b)}{b_\varepsilon^{n+1}}-(n+1)I_{a,b}^{\varepsilon,y_0}\Big(\frac{f^{(n)}}{y_\varepsilon^{n+2}}\Big)+I_{a,b}^{\varepsilon,y_0}\Big(\frac{f^{(n+1)}}{y_\varepsilon^{n+1}}\Big)\bigg) \\
&=\sum\limits_{k=0}^n\frac{C_{k,0}e^{iab^2-\varepsilon(b-y_0)^2}f^{(k)}(b)}{(ia-\varepsilon)^{k+1}b_\varepsilon^{k+1}}+\frac{C_{n,0}}{(ia-\varepsilon)^{n+1}}I_{a,b}^{\varepsilon,y_0}\Big(\frac{f^{(n+1)}}{y_\varepsilon^{n+1}}\Big) \\
&\quad-\sum\limits_{k=0}^n\frac{(k+1)C_{k,0}}{(ia-\varepsilon)^{k+1}}I_{a,b}^{\varepsilon,y_0}\Big(\frac{f^{(k)}}{y_\varepsilon^{k+2}}\Big),
\end{align*}
where in the last line we used the property $C_{n,0}=-\frac{1}{2}C_{n-1,0}$ of the coefficients. \medskip

{\it Step 2.} Now we generalize \eqref{Eq_Integral_formula} to every $m\in\mathbb{N}_0$. The induction start $m=0$ was already done in Step 1. For the induction step $m\to m+1$, we assume that \eqref{Eq_Integral_formula} holds for $n\in\mathbb{N}$ and $m\in\mathbb{N}_0$. The computations will now be split into two parts. In {\it Step 2.1} we verify for every $K\in\{0,\dots,n-1\}$ the formula
\begin{equation} \label{Eq_Integral_formula_1}
\begin{split}
I_{a,b}^{\varepsilon,y_0}(f)&=\sum\limits_{k=0}^{n-1}\sum\limits_{l=0}^m\frac{C_{k,l}e^{iab^2-\varepsilon(b-y_0)^2}f^{(k)}(b)}{(ia-\varepsilon)^{k+1+l}b_\varepsilon^{k+1+2l}}+\sum\limits_{k=0}^{K-1}\frac{C_{k,m+1}e^{iab^2-\varepsilon(b-y_0)^2}f^{(k)}(b)}{(ia-\varepsilon)^{k+2+m}b_\varepsilon^{k+3+2m}}  \\
&\quad+\sum\limits_{l=0}^m\frac{C_{n-1,l}}{(ia-\varepsilon)^{n+l}}I_{a,b}^{\varepsilon,y_0}\Big(\frac{f^{(n)}}{y_\varepsilon^{n+2l}}\Big)-\sum\limits_{k=0}^{K-1}\frac{(k+3+2m)C_{k,m+1}}{(ia-\varepsilon)^{k+2+m}}I_{a,b}^{\varepsilon,y_0}\Big(\frac{f^{(k)}}{y_\varepsilon^{k+4+2m}}\Big)  \\
&\quad-\frac{2C_{K,m+1}}{(ia-\varepsilon)^{K+1+m}}I_{a,b}^{\varepsilon,y_0}\Big(\frac{f^{(K)}}{y_\varepsilon^{K+2+2m}}\Big)-\sum\limits_{k=K+1}^{n-1}\frac{(k+1+2m)C_{k,m}}{(ia-\varepsilon)^{k+1+m}}I_{a,b}^{\varepsilon,y_0}\Big(\frac{f^{(k)}}{y_\varepsilon^{k+2+2m}}\Big).
\end{split}
\end{equation}
For $K=0$, this formula reads as
\begin{align*}
I_{a,b}^{\varepsilon,y_0}(f)&=\sum\limits_{k=0}^{n-1}\sum\limits_{l=0}^m\frac{C_{k,l}e^{iab^2-\varepsilon(b-y_0)^2}f^{(k)}(b)}{(ia-\varepsilon)^{k+1+l}b_\varepsilon^{k+1+2l}}+\sum\limits_{l=0}^m\frac{C_{n-1,l}}{(ia-\varepsilon)^{n+l}}I_{a,b}^{\varepsilon,y_0}\Big(\frac{f^{(n)}}{y_\varepsilon^{n+2l}}\Big) \\
&\quad-\frac{2C_{0,m+1}}{(ia-\varepsilon)^{m+1}}I_{a,b}^{\varepsilon,y_0}\Big(\frac{f}{y_\varepsilon^{2+2m}}\Big)-\sum\limits_{k=1}^{n-1}\frac{(k+1+2m)C_{k,m}}{(ia-\varepsilon)^{k+1+m}}I_{a,b}^{\varepsilon,y_0}\Big(\frac{f^{(k)}}{y_\varepsilon^{k+2+2m}}\Big),
\end{align*}
and holds, since with $C_{0,m+1}=\frac{1+2m}{2}C_{0,m}$ it is exactly the induction assumption \eqref{Eq_Integral_formula}. For the step $K\to K+1$, we use the formula \eqref{Eq_Integration_by_parts} with $f^{(K)}$ and $\kappa=K+2+2m$ in the induction assumption \eqref{Eq_Integral_formula_1}, and get
\begin{align*}
I_{a,b}^{\varepsilon,y_0}(f)&=\sum\limits_{k=0}^{n-1}\sum\limits_{l=0}^m\frac{C_{k,l}e^{iab^2-\varepsilon(y-y_0)^2}f^{(k)}(b)}{(ia-\varepsilon)^{k+1+l}b_\varepsilon^{k+1+2l}}+\sum\limits_{k=0}^{K-1}\frac{C_{k,m+1}e^{iab^2-\varepsilon(y-y_0)^2}f^{(k)}(b)}{(ia-\varepsilon)^{k+2+m}b_\varepsilon^{k+3+2m}} \\
&\quad+\sum\limits_{l=0}^m\frac{C_{n-1,l}}{(ia-\varepsilon)^{n+l}}I_{a,b}^{\varepsilon,y_0}\Big(\frac{f^{(n)}}{y_\varepsilon^{n+2l}}\Big)-\sum\limits_{k=0}^{K-1}\frac{(k+3+2m)C_{k,m+1}}{(ia-\varepsilon)^{k+2+m}}I_{a,b}^{\varepsilon,y_0}\Big(\frac{f^{(k)}}{y_\varepsilon^{k+4+2m}}\Big) \\
&\quad+\frac{C_{K,m+1}}{(ia-\varepsilon)^{K+2+m}}\bigg(\frac{e^{iab^2-\varepsilon(y-y_0)^2}f^{(K)}(b)}{b_\varepsilon^{K+3+2m}}-(K+3+2m)I_{a,b}^{\varepsilon,y_0}\Big(\frac{f^{(K)}}{y_\varepsilon^{K+4+2m}}\Big) \\
&\hspace{3.5cm}+I_{a,b}^{\varepsilon,y_0}\Big(\frac{f^{(K+1)}}{y_\varepsilon^{K+3+2m}}\Big)\bigg)-\sum\limits_{k=K+1}^{n-1}\frac{(k+1+2m)C_{k,m}}{(ia-\varepsilon)^{k+1+m}}I_{a,b}^{\varepsilon,y_0}\Big(\frac{f^{(k)}}{y_\varepsilon^{k+2+2m}}\Big).
\end{align*}
The relation $C_{K,m+1}-(K+2+2m)C_{K+1,m}=-2C_{K+1,m+1}$ of the coefficients \eqref{Eq_Coefficients} simplifies this equation to the stated formula \eqref{Eq_Integral_formula_1}, namely
\begin{align*}
I_{a,b}^{\varepsilon,y_0}(f)&=\sum\limits_{k=0}^{n-1}\sum\limits_{l=0}^m\frac{C_{k,l}e^{iab^2-\varepsilon(y-y_0)^2}f^{(k)}(b)}{(ia-\varepsilon)^{k+1+l}b_\varepsilon^{k+1+2l}}+\sum\limits_{k=0}^K\frac{C_{k,m+1}e^{iab^2-\varepsilon(y-y_0)^2}f^{(k)}(b)}{(ia-\varepsilon)^{k+2+m}b_\varepsilon^{k+3+2m}} \\
&\quad+\sum\limits_{l=0}^m\frac{C_{n-1,l}}{(ia-\varepsilon)^{n+l}}I_{a,b}^{\varepsilon,y_0}\Big(\frac{f^{(n)}}{y_\varepsilon^{n+2l}}\Big)-\sum\limits_{k=0}^K\frac{(k+3+2m)C_{k,m+1}}{(ia-\varepsilon)^{k+2+m}}I_{a,b}^{\varepsilon,y_0}\Big(\frac{f^{(k)}}{y_\varepsilon^{k+4+2m}}\Big) \\
&\quad-\frac{2C_{K+1,m+1}}{(ia-\varepsilon)^{K+2+m}}I_{a,b}^{\varepsilon,y_0}\Big(\frac{f^{(K+1)}}{y_\varepsilon^{K+3+2m}}\Big)-\sum\limits_{k=K+2}^{n-1}\frac{(k+1+2m)C_{k,m}}{(ia-\varepsilon)^{k+1+m}}I_{a,b}^{\varepsilon,y_0}\Big(\frac{f^{(k)}}{y_\varepsilon^{k+2+2m}}\Big).
\end{align*}
In {\it Step 2.2} we prove the second part of the induction step $m\to m+1$. Choosing the special value $K=n-1$ in \eqref{Eq_Integral_formula} gives
\begin{align*}
I_{a,b}^{\varepsilon,y_0}(f)&=\sum\limits_{k=0}^{n-1}\sum\limits_{l=0}^m\frac{C_{k,l}e^{iab^2-\varepsilon(y-y_0)^2}f^{(k)}(b)}{(ia-\varepsilon)^{k+1+l}b_\varepsilon^{k+1+2l}}+\sum\limits_{k=0}^{n-2}\frac{C_{k,m+1}e^{iab^2-\varepsilon(y-y_0)^2}f^{(k)}(b)}{(ia-\varepsilon)^{k+2+m}b_\varepsilon^{k+3+2m}} \\
&\quad+\sum\limits_{l=0}^m\frac{C_{n-1,l}}{(ia-\varepsilon)^{n+l}}I_{a,b}^{\varepsilon,y_0}\Big(\frac{f^{(n)}}{y_\varepsilon^{n+2l}}\Big)-\sum\limits_{k=0}^{n-2}\frac{(k+3+2m)C_{k,m+1}}{(ia-\varepsilon)^{k+2+m}}I_{a,b}^{\varepsilon,y_0}\Big(\frac{f^{(k)}}{y_\varepsilon^{k+4+2m}}\Big) \\
&\quad-\frac{2C_{n-1,m+1}}{(ia-\varepsilon)^{n+m}}I_{a,b}^{\varepsilon,y_0}\Big(\frac{f^{(n-1)}}{y_\varepsilon^{n+1+2m}}\Big).
\end{align*}
Using a last time the identity \eqref{Eq_Integration_by_parts} with $f^{(n-1)}$ and $\kappa=n+1+2m$ turns this formula into \eqref{Eq_Integral_formula} with $m+1$, namely
\begin{align*}
I_{a,b}^{\varepsilon,y_0}(f)&=\sum\limits_{k=0}^{n-1}\sum\limits_{l=0}^m\frac{C_{k,l}e^{iab^2-\varepsilon(y-y_0)^2}f^{(k)}(b)}{(ia-\varepsilon)^{k+1+l}b_\varepsilon^{k+1+2l}}+\sum\limits_{k=0}^{n-2}\frac{C_{k,m+1}e^{iab^2-\varepsilon(y-y_0)^2}f^{(k)}(b)}{(ia-\varepsilon)^{k+2+m}b_\varepsilon^{k+3+2m}} \\
&\quad+\sum\limits_{l=0}^m\frac{C_{n-1,l}}{(ia-\varepsilon)^{n+l}}I_{a,b}^{\varepsilon,y_0}\Big(\frac{f^{(n)}}{y_\varepsilon^{n+2l}}\Big)-\sum\limits_{k=0}^{n-2}\frac{(k+3+2m)C_{k,m+1}}{(ia-\varepsilon)^{k+2+m}}I_{a,b}^{\varepsilon,y_0}\Big(\frac{f^{(k)}}{y_\varepsilon^{k+4+2m}}\Big) \\
&\quad+\frac{C_{n-1,m+1}}{(ia-\varepsilon)^{n+1+m}}\bigg(\frac{e^{iab^2-\varepsilon(y-y_0)^2}f^{(n-1)}(b)}{b_\varepsilon^{n+2+2m}}-(n+2+2m)I_{a,b}^{\varepsilon,y_0}\Big(\frac{f^{(n-1)}}{y_\varepsilon^{n+3+2m}}\Big) \\
&\hspace{9.5cm}+I_{a,b}^{\varepsilon,y_0}\Big(\frac{f^{(n)}}{y_\varepsilon^{n+2+2m}}\Big)\bigg) \\
&=\sum\limits_{k=0}^{n-1}\sum\limits_{l=0}^m\frac{C_{k,l}e^{iab^2-\varepsilon(y-y_0)^2}f^{(k)}(b)}{(ia-\varepsilon)^{k+1+l}b_\varepsilon^{k+1+2l}}+\sum\limits_{k=0}^{n-1}\frac{C_{k,m+1}e^{iab^2-\varepsilon(y-y_0)^2}f^{(k)}(b)}{(ia-\varepsilon)^{k+2+m}b_\varepsilon^{k+3+2m}} \\
&\quad+\sum\limits_{l=0}^{m+1}\frac{C_{n-1,l}}{(ia-\varepsilon)^{n+l}}I_{a,b}^{\varepsilon,y_0}\Big(\frac{f^{(n)}}{y_\varepsilon^{n+2l}}\Big)-\sum\limits_{k=0}^{n-1}\frac{(k+3+2m)C_{k,m+1}}{(ia-\varepsilon)^{k+2+m}}I_{a,b}^{\varepsilon,y_0}\Big(\frac{f^{(k)}}{y_\varepsilon^{k+4+2m}}\Big). \qedhere
\end{align*}
\end{proof}

In the following theorem we will make use of the formula \eqref{Eq_Integral_formula} to make sense of the integral $\mathcal{I}_{a,b}(f)$ in \eqref{Eq_Iab_formal} for functions $f$ in the space $C_\alpha^n([b,\infty))$.

\begin{thm}\label{thm_Integral}
Let $a\in\mathbb{R}\setminus\{0\}$, $b>0$, $n\in\mathbb{N}$, $\alpha\geq 0$, with $n>\alpha+1$. Then for every $f\in C_\alpha^n([b,\infty))$ the oscillatory integral
\begin{equation}\label{Eq_Integral}
\mathcal{I}_{a,b}(f):=\lim\limits_{\varepsilon\to 0^+}I_{a,b}^{\varepsilon,y_0}(f)=\lim\limits_{\varepsilon\to 0^+}\int_b^\infty e^{-\varepsilon(y-y_0)^2}e^{iay^2}f(y)dy
\end{equation}
exists and is independent of $y_0\in\mathbb{R}$. Moreover, for every $0<\varepsilon\leq 1$ one has
\begin{equation}\label{Eq_D}
|I_{a,b}^{\varepsilon,y_0}(f)|\leq D_{a,b,n,\alpha}\Vert f\Vert_{C_\alpha^n([b,\infty))}\quad\text{and}\quad|\mathcal{I}_{a,b}(f)|\leq D_{a,b,n,\alpha}\Vert f\Vert_{C_\alpha^n([b,\infty))},
\end{equation}
where $D_{a,b,n,\alpha}\geq 0$  depends continuously on $a,b$ and admits the asymptotics
\begin{equation}\label{Eq_D_asymptotics}
D_{a,b,n,\alpha}=\mathcal{O}\Big(\frac{1}{|a|}\Big),\quad\text{as }|a|\to\infty.
\end{equation}
\end{thm}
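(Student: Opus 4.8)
The plan is to fix an integer $m$ large enough that $2m>n+\alpha-1$ and to read off the existence of the limit, the bound \eqref{Eq_D} and its asymptotics \eqref{Eq_D_asymptotics} directly from the integration by parts formula \eqref{Eq_Integral_formula}. That formula expresses $I_{a,b}^{\varepsilon,y_0}(f)$ as finitely many boundary terms plus the integrals $I_{a,b}^{\varepsilon,y_0}(f^{(n)}/y_\varepsilon^{n+2l})$, $l=0,\dots,m$, and $I_{a,b}^{\varepsilon,y_0}(f^{(k)}/y_\varepsilon^{k+2+2m})$, $k=0,\dots,n-1$. The whole point of choosing $m$ this large is that, after inserting the derivative bounds, every one of these integrals is absolutely convergent, uniformly in $\varepsilon\in(0,1]$ and in $y_0$; one then passes to the limit termwise.

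The key step, and the one I expect to require the most care, is a lower bound on the complex shifted variables $y_\varepsilon=y+\varepsilon y_0/(ia-\varepsilon)$ and $b_\varepsilon$ that is uniform in both $y_0\in\mathbb{R}$ and $\varepsilon\in(0,1]$, so that the denominators cause no trouble. Multiplying by $(ia-\varepsilon)$ gives $(ia-\varepsilon)y_\varepsilon=iay-\varepsilon(y-y_0)$, whence
\[
|y_\varepsilon|^2=\frac{a^2y^2+\varepsilon^2(y-y_0)^2}{a^2+\varepsilon^2}\geq\frac{a^2}{a^2+1}\,y^2,\qquad y\in[b,\infty),\ 0<\varepsilon\leq 1,
\]
and the same bound holds for $b_\varepsilon$ with $y$ replaced by $b$. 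Together with $|ia-\varepsilon|\geq|a|$ and the estimate \eqref{Eq_fk_estimate} of Proposition~\ref{prop_f_derivatives_estimate}, namely $|f^{(k)}(y)|\leq\Vert f\Vert_{C_\alpha^n([b,\infty))}y^{n-k+\alpha}$ and $|f^{(n)}(y)|\leq\Vert f\Vert_{C_\alpha^n([b,\infty))}y^{\alpha}$, each remaining integral is dominated by $\Vert f\Vert_{C_\alpha^n([b,\infty))}$ times a constant multiple of $\int_b^\infty y^{p}\,dy$. These power integrals converge: for the $f^{(n)}$-terms the integrand decays like $y^{\alpha-n-2l}$ with $\alpha-n<-1$ since $n>\alpha+1$, and for the $f^{(k)}$-terms like $y^{n+\alpha-2k-2-2m}$, which is below $y^{-1}$ exactly because $2m>n+\alpha-1$ (the case $k=0$ being decisive).

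Existence of the limit and independence of $y_0$ then follow by letting $\varepsilon\to0^+$ in \eqref{Eq_Integral_formula}. The prefactors satisfy $(ia-\varepsilon)^{-j}\to(ia)^{-j}$, one has $b_\varepsilon\to b$ and $e^{-\varepsilon(b-y_0)^2}\to1$, and in each integral the integrand converges pointwise (as $y_\varepsilon\to y$) while being dominated by the $y_0$- and $\varepsilon$-independent integrable majorant of the previous paragraph; dominated convergence therefore sends each $I_{a,b}^{\varepsilon,y_0}(f^{(k)}/y_\varepsilon^{j})$ to the absolutely convergent oscillatory integral $\int_b^\infty e^{iay^2}f^{(k)}(y)/y^{j}\,dy$. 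Since the resulting expression for $\mathcal{I}_{a,b}(f)$ no longer contains $y_0$, independence is automatic.

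Finally, collecting the estimates of the second paragraph yields $|I_{a,b}^{\varepsilon,y_0}(f)|\leq D_{a,b,n,\alpha}\Vert f\Vert_{C_\alpha^n([b,\infty))}$ with $D_{a,b,n,\alpha}$ an explicit finite sum, and the bound on $|\mathcal{I}_{a,b}(f)|$ follows by passing to the limit, giving \eqref{Eq_D}. Every summand of $D_{a,b,n,\alpha}$ is a product of a coefficient $C_{k,l}$, a power $b^{p}$, a convergent-integral constant, and a factor built from $(a^2+1)$ and $|a|$ that carries an overall power $|a|^{-p}$ with $p\geq1$ (each term inherits at least one factor $(ia-\varepsilon)^{-1}$, while $\sqrt{a^2+1}/|a|\to1$); hence $D_{a,b,n,\alpha}=\mathcal{O}(1/|a|)$ as $|a|\to\infty$, which is \eqref{Eq_D_asymptotics}. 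Continuity in $(a,b)$ on $(\mathbb{R}\setminus\{0\})\times(0,\infty)$ is clear since each summand is continuous there. The only genuine obstacle is the uniform denominator bound above; once that identity is in hand, the rest is bookkeeping of convergent power integrals.
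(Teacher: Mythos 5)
Your proposal is correct and takes essentially the same route as the paper's proof: it applies the iterated integration-by-parts formula \eqref{Eq_Integral_formula} with a fixed sufficiently large $m$ (the paper takes $m=n-1$, which satisfies your condition $2m>n+\alpha-1$ precisely because $n>\alpha+1$), uses the $\varepsilon$- and $y_0$-uniform lower bound on $|y_\varepsilon|$ (your identity $|y_\varepsilon|^2=(a^2y^2+\varepsilon^2(y-y_0)^2)/(a^2+\varepsilon^2)$ is exactly the paper's estimate \eqref{Eq_Integral_9} combined with $|ia-\varepsilon|\leq|a|+1$), and then concludes via \eqref{Eq_fk_estimate}, dominated convergence, and collection of the resulting terms into $D_{a,b,n,\alpha}$ with the same $\mathcal{O}(1/|a|)$ bookkeeping.
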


\begin{proof}
Let us fix $y_0\in\mathbb{R}$ and $0<\varepsilon\leq 1$. According to \eqref{Eq_Integral_formula}, with the choice $m=n-1$, we have the representation
\begin{equation}\label{Eq_Integral_1}
\begin{split}
I_{a,b}^{\varepsilon,y_0}(f)&=\sum\limits_{k=0}^{n-1}\sum\limits_{l=0}^{n-1}\frac{C_{k,l}e^{iab^2-\varepsilon(y-y_0)^2}f^{(k)}(b)}{(ia-\varepsilon)^{k+1+l}b_\varepsilon^{k+1+2l}}+\sum\limits_{l=0}^{n-1}\frac{C_{n-1,l}}{(ia-\varepsilon)^{n+l}}I_{a,b}^{\varepsilon,y_0}\Big(\frac{f^{(n)}}{y_\varepsilon^{n+2l}}\Big) \\
&\quad-\sum\limits_{k=0}^{n-1}\frac{(k+2n-1)C_{k,n-1}}{(ia-\varepsilon)^{k+n}}I_{a,b}^{\varepsilon,y_0}\Big(\frac{f^{(k)}}{y_\varepsilon^{k+2n}}\Big).
\end{split}
\end{equation}
To show that the limit $\varepsilon\to 0^+$ of the right hand side exists it suffices to verify the existence of the limits
\begin{equation}\label{limjussi1}
\lim\limits_{\varepsilon\to 0^+}I_{a,b}^{\varepsilon,y_0}\Big(\frac{f^{(n)}}{y_\varepsilon^{n+2l}}\Big),\qquad\text{and}\qquad\lim\limits_{\varepsilon\to 0^+}I_{a,b}^{\varepsilon,y_0}\Big(\frac{f^{(k)}}{y_\varepsilon^{k+2n}}\Big), \qquad k,l\in\{0,\dots,n-1\}.
\end{equation} 
For the first limit in \eqref{limjussi1} we use the upper bound \eqref{Eq_fk_estimate} and obtain for every $l\in\{0,\dots,n-1\}$ the estimate
\begin{equation}\label{Eq_Integral_5}
\Big|\frac{f^{(n)}(y)}{y_\varepsilon^{n+2l}}\Big|\leq\frac{\Vert f\Vert_{C_\alpha^n([b,\infty))}y^\alpha}{|y_\varepsilon|^{n+2l}}\leq\frac{|ia-\varepsilon|^{n+2l}\Vert f\Vert_{C_\alpha^n([b,\infty))}}{|a|^{n+2l}y^{n+2l-\alpha}}\leq\frac{|ia-\varepsilon|^{n+2l}\Vert f\Vert_{C_\alpha^n([b,\infty))}}{|a|^{n+2l}b^{2l}y^{n-\alpha}},
\end{equation}
where in the second inequality we used
\begin{equation}\label{Eq_Integral_9}
|y_\varepsilon|=\Big|y+\frac{\varepsilon y_0}{ia-\varepsilon}\Big|=\frac{|iay-\varepsilon(y-y_0)|}{|ia-\varepsilon|}\geq\frac{|a|y}{|ia-\varepsilon|}.
\end{equation}
Since $n-\alpha>1$ by assumption, the estimate \eqref{Eq_Integral_5} shows that $\frac{f^{(n)}(y)}{y_\varepsilon^{n+2l}}$ is integrable on $[b,\infty)$. Moreover, for every $0<\varepsilon\leq 1$ we can estimate $|ia-\varepsilon|\leq \vert a\vert +1$ and make the upper bound \eqref{Eq_Integral_5} independent of $\varepsilon$. Then the dominated convergence theorem implies the existence of the limit
\begin{equation*}
\lim\limits_{\varepsilon\to 0^+}I_{a,b}^{\varepsilon,y_0}\Big(\frac{f^{(n)}}{y_\varepsilon^{n+2l}}\Big)=\lim\limits_{\varepsilon\to 0^+}\int_b^\infty e^{iay^2-\varepsilon(y-y_0)^2}\frac{f^{(n)}(y)}{y_\varepsilon^{n+2l}}dy=\int_b^\infty e^{iay^2}\frac{f^{(n)}(y)}{y^{n+2l}}dy.
\end{equation*}
Moreover, from \eqref{Eq_Integral_5} we obtain the upper bound
\begin{equation}\label{Eq_Integral_8}
\Big|I_{a,b}^{\varepsilon,y_0}\Big(\frac{f^{(n)}}{y_\varepsilon^{n+2l}}\Big)\Big|\leq\frac{|ia-\varepsilon|^{n+2l}\Vert f\Vert_{C_\alpha^n([b,\infty))}}{|a|^{n+2l}b^{2l}}\int_b^\infty\frac{1}{y^{n-\alpha}}dy=\frac{|ia-\varepsilon|^{n+2l}\Vert f\Vert_{C_\alpha^n([b,\infty))}}{(n-\alpha-1)|a|^{n+2l}b^{2l+n-\alpha-1}}.
\end{equation}
To show the existence of the second limit in \eqref{limjussi1} we estimate in the same way as in \eqref{Eq_Integral_5} for every $k\in\{0,\dots,n-1\}$ 
\begin{equation}\label{Eq_Integral_4}
\Big|\frac{f^{(k)}(y)}{y_\varepsilon^{k+2n}}\Big|\leq\frac{\Vert f\Vert_{C_\alpha^n([b,\infty))}y^{n-k+\alpha}}{|y_\varepsilon|^{k+2n}}\leq\frac{|ia-\varepsilon|^{k+2n}\Vert f\Vert_{C_\alpha^n([b,\infty))}}{a^{k+2n}y^{2k+n-\alpha}}\leq\frac{|ia-\varepsilon|^{k+2n}\Vert f\Vert_{C_\alpha^n([b,\infty))}}{a^{k+2n}b^{2k}y^{n-\alpha}}.
\end{equation}
Since $n-\alpha>1$ by assumption, this estimate shows that $\frac{f^{(k)}}{y_\varepsilon^{k+2n}}$ is integrable on $[b,\infty)$, and the dominated convergence theorem gives the existence of the limit
\begin{equation*}
\lim\limits_{\varepsilon\to 0^+}I_{a,b}^{\varepsilon,y_0}\Big(\frac{f^{(k)}}{y_\varepsilon^{k+2n}}\Big)=\lim\limits_{\varepsilon\to 0^+}\int_b^\infty e^{iay^2-\varepsilon(y-y_0)^2}\frac{f^{(k)}(y)}{y_\varepsilon^{k+2n}}dy=\int_b^\infty e^{iay^2}\frac{f^{(k)}(y)}{y^{k+2n}}dy.
\end{equation*}
Moreover, from \eqref{Eq_Integral_4} we obtain the upper bound 
\begin{equation}\label{Eq_Integral_7}
\Big|I_{a,b}^{\varepsilon,y_0}\Big(\frac{f^{(k)}}{y_\varepsilon^{k+2n}}\Big)\Big|\leq\frac{|ia-\varepsilon|^{k+2n}\Vert f\Vert_{C_\alpha^n([b,\infty))}}{|a|^{k+2n}b^{2k}}\int_b^\infty\frac{1}{y^{n-\alpha}}dy=\frac{|ia-\varepsilon|^{k+2n}\Vert f\Vert_{C_\alpha^n([b,\infty))}}{(n-\alpha-1)|a|^{k+2n}b^{2k+n-\alpha-1}}.
\end{equation}
Now \eqref{limjussi1} implies that also the limit of the sum \eqref{Eq_Integral_1} exists and is given by
\begin{equation}\label{Eq_Integral_2}
\begin{split}
\mathcal{I}_{a,b}(f):=\lim\limits_{\varepsilon\to 0^+}I_{a,b}^{\varepsilon,y_0}(f)&=\sum\limits_{k=0}^{n-1}\sum\limits_{l=0}^{n-1}\frac{C_{k,l}e^{iab^2}f^{(k)}(b)}{(ia)^{k+1+l}b^{k+1+2l}}+\sum\limits_{l=0}^{n-1}\frac{C_{n-1,l}}{(ia)^{n+l}}\int_b^\infty e^{iay^2}\frac{f^{(n)}(y)}{y^{n+2l}}dy  \\
&\quad-\sum\limits_{k=0}^{n-1}\frac{(k+2n-1)C_{k,n-1}}{(ia)^{k+n}}\int_b^\infty e^{iay^2}\frac{f^{(k)}(y)}{y^{k+2n}}dy. 
\end{split}
\end{equation}
With the estimates \eqref{Eq_Integral_9}, \eqref{Eq_Integral_8}, \eqref{Eq_Integral_7}, with $|f^{(k)}(b)|\leq\Vert f\Vert_{C_\alpha^n([b,\infty))}b^{n-k+\alpha}$ by \eqref{Eq_fk_estimate}, and with $|b_\varepsilon|\geq\frac{|a|b}{|ia-\varepsilon|}$ as in \eqref{Eq_Integral_9}, we can also estimate \eqref{Eq_Integral_1} by
\begin{align*}
|I_{a,b}^{\varepsilon,y_0}(f)|&\leq\sum\limits_{k=0}^{n-1}\sum\limits_{l=0}^{n-1}\frac{|C_{k,l}||ia-\varepsilon|^l\Vert f\Vert_{C_\alpha^n([b,\infty))}}{|a|^{k+1+2l}b^{2k+2l-n-\alpha+1}}+\sum\limits_{l=0}^{n-1}\frac{|C_{n-1,l}||ia-\varepsilon|^l\Vert f\Vert_{C_\alpha^n([b,\infty))}}{(n-\alpha-1)|a|^{n+2l}b^{2l+n-\alpha-1}} \\
&\quad+\sum\limits_{k=0}^{n-1}\frac{(k+2n-1)|C_{k,n-1}||ia-\varepsilon|^n\Vert f\Vert_{C_\alpha^n([b,\infty))}}{(n-\alpha-1)|a|^{k+2n}b^{2k+n-\alpha-1}} \\
&\leq\bigg(\sum\limits_{k=0}^{n-1}\sum\limits_{l=0}^{n-1}\frac{|C_{k,l}|(|a|+1)^l}{|a|^{k+1+2l}b^{2k+2l-n-\alpha+1}}+\sum\limits_{l=0}^{n-1}\frac{|C_{n-1,l}|(|a|+1)^l}{(n-\alpha-1)|a|^{n+2l}b^{2l+n-\alpha-1}} \\
&\qquad+\sum\limits_{k=0}^{n-1}\frac{(k+2n-1)|C_{k,n-1}|(|a|+1)^n}{(n-\alpha-1)|a|^{k+2n}b^{2k+n-\alpha-1}}\bigg)\Vert f\Vert_{C_\alpha^n([b,\infty))},
\end{align*}
where in the second inequality we used $\varepsilon\leq 1$ to estimate $|ia-\varepsilon|\leq|a|+1$. This estimate shows \eqref{Eq_D} with $D_{a,b,n,\alpha}$ being the term inside the parentheses. The continuous dependency on $a$ and $b$ of this coefficient is obvious. Since $n>\alpha+1\geq 1$, one also obtains the convergence
\begin{equation*}
\lim\limits_{|a|\to\infty}\big(|a|D_{a,b,n,\alpha}\big)=\frac{|C_{0,0}|}{b^{-n-\alpha+1}}=\frac{b^{n+\alpha-1}}{2},
\end{equation*}
and hence we conclude the asymptotics $D_{a,b,n,\alpha}=\mathcal{O}(\frac{1}{|a|})$ as $|a|\to\infty$.
\end{proof}

Next, we will illustrate for the function $f(y)=e^{i\kappa y}$, $\kappa\in\mathbb{R}$, how the integral $\mathcal{I}_{a,b}(e^{i\kappa y})$ in \eqref{Eq_Integral} can be computed. Here we shall make use of the error function 
\begin{equation}\label{erfi}
\erf(z)=\frac{2}{\sqrt{\pi}}\int_0^ze^{-\xi^2}d\xi,\qquad z\in\mathbb{C},
\end{equation}
and we agree to cut the complex square root along the negative real axis, so that $\Re(\sqrt{z})>0$ for every $z\in\mathbb{C}\setminus(-\infty,0]$.

\begin{exam}
For $\kappa\in\mathbb{R}$ consider the exponential function $f(y):=e^{i\kappa y}$. The derivatives are $f^{(n)}(y)=(i\kappa)^ne^{i\kappa y}$ and hence $f\in C_0^n([b,\infty))$ for every $n\in\mathbb{N}_0$. Choosing in particular $n\geq 2$, the oscillatory integral \eqref{Eq_Integral} is for every $a\in\mathbb{R}\setminus\{0\}$, $b>0$ well defined, and has the explicit value
\begin{align*}
\mathcal{I}_{a,b}(e^{i\kappa y})&=\lim\limits_{\varepsilon\to 0^+}\int_b^\infty e^{-\varepsilon y^2}e^{iay^2+i\kappa y}dy \\
&=\lim\limits_{\varepsilon\to 0^+}\frac{\sqrt{\pi}}{2\sqrt{\varepsilon-ia}}e^{-\frac{\kappa^2}{4(\varepsilon-ia)}}\erf\Big(y\sqrt{\varepsilon-ia}-\frac{i\kappa}{2\sqrt{\varepsilon-ia}}\Big)\bigg|_{y=b}^\infty.
\end{align*}
Since $\Arg(\sqrt{\varepsilon-ia})\in(-\frac{\pi}{4},\frac{\pi}{4})$, the above error function in the limit $y\to\infty$ is equal to $1$, see, e.g., \cite[Eq.7.1.16]{AS72}. Hence we obtain for the oscillatory integral
\begin{align*}
\mathcal{I}_{a,b}(e^{i\kappa y})&=\lim\limits_{\varepsilon\to 0^+}\frac{\sqrt{\pi}}{2\sqrt{\varepsilon-ia}}e^{-\frac{\kappa^2}{4(\varepsilon-ia)}}\bigg(1-\erf\Big(b\sqrt{\varepsilon-ia}-\frac{i\kappa}{2\sqrt{\varepsilon-ia}}\Big)\bigg) \\
&=\frac{\sqrt{\pi}}{2\sqrt{-ia}}e^{\frac{\kappa^2}{4ia}}\bigg(1-\erf\Big(b\sqrt{-ia}-\frac{i\kappa}{2\sqrt{-ia}}\Big)\bigg).
\end{align*}
We note that $\mathcal{I}_{a,b}(e^{i\kappa y})$ is the natural generalization of the absolutely convergent Gauss integral
\begin{equation*}
\int_b^\infty e^{-\alpha y^2+i\kappa y}dy=\frac{\sqrt{\pi}}{2\sqrt{\alpha}}e^{-\frac{\kappa^2}{4\alpha}}\bigg(1-\erf\Big(b\sqrt{\alpha}-\frac{i\kappa}{2\sqrt{\alpha}}\Big)\bigg),\qquad\Re(\alpha)>0,
\end{equation*}
to purely imaginary values $\alpha=-ia\in i\mathbb{R}\setminus\{0\}$.
\end{exam}

\begin{cor}
Let $a\in\mathbb{R}\setminus\{0\}$, $b>0$, $n\in\mathbb{N}$, $\alpha\geq 0$, with $n>\alpha+1$, and assume that $f\in C_\alpha^n([b,\infty))$ satisfies for every $k\in\{0,\dots,n-1\}$ the asymptotic condition
\begin{equation}\label{Eq_Riemann_assumption}
f^{(k)}(y)=o(y^{k+1})\quad\text{as }y\to\infty.
\end{equation} 
Then the oscillatory integral $\mathcal{I}_{a,b}(f)$ in \eqref{Eq_Integral} can be represented as the improper Riemann integral
\begin{equation*}
\mathcal{I}_{a,b}(f)=\lim\limits_{R\to\infty}\int_b^Re^{iay^2}f(y)dy.
\end{equation*}
\end{cor}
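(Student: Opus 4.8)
The plan is to avoid any new integration by parts and instead reduce everything to the representation \eqref{Eq_Integral_2} already obtained in the proof of Theorem~\ref{thm_Integral}. The starting point is the elementary identity, valid for every $R>b$,
\begin{equation*}
\int_b^Re^{iay^2}f(y)dy=\mathcal{I}_{a,b}(f)-\mathcal{I}_{a,R}(f),
\end{equation*}
where $\mathcal{I}_{a,R}(f)$ denotes the oscillatory integral of $f|_{[R,\infty)}$ over $[R,\infty)$. This quantity is well defined: by Corollary~\ref{cor_Reducing_interval} one has $f|_{[R,\infty)}\in C_\alpha^n([R,\infty))$, and since the condition $n>\alpha+1$ is unchanged, Theorem~\ref{thm_Integral} applies on $[R,\infty)$ and guarantees in particular that $\mathcal{I}_{a,R}(f)=\lim_{\varepsilon\to 0^+}I_{a,R}^{\varepsilon,y_0}(f)$ exists independently of $y_0$.

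To establish the identity I would fix $y_0$ and split the regularized integral as $I_{a,b}^{\varepsilon,y_0}(f)=\int_b^Re^{-\varepsilon(y-y_0)^2}e^{iay^2}f(y)dy+I_{a,R}^{\varepsilon,y_0}(f)$ and then let $\varepsilon\to 0^+$. On the finite interval $[b,R]$ the integrand is dominated by $|f|$, which is bounded there, so dominated convergence gives $\int_b^Re^{-\varepsilon(y-y_0)^2}e^{iay^2}f(y)dy\to\int_b^Re^{iay^2}f(y)dy$; the remaining two limits are $\mathcal{I}_{a,b}(f)$ and $\mathcal{I}_{a,R}(f)$ by definition. Since all three limits exist separately, the split of the limit of a sum into a sum of limits is legitimate and yields the displayed identity.

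It then remains to prove that $\mathcal{I}_{a,R}(f)\to 0$ as $R\to\infty$, for which I would read off the explicit formula \eqref{Eq_Integral_2} with $b$ replaced by $R$ (the coefficients $C_{k,l}$ do not depend on the lower endpoint). Its boundary terms carry a factor $f^{(k)}(R)/R^{k+1+2l}$, which by the hypothesis \eqref{Eq_Riemann_assumption} is $o(R^{k+1})/R^{k+1+2l}=o(R^{-2l})\to 0$ for every $k\in\{0,\dots,n-1\}$ and $l\geq 0$; here the case $l=0$ is precisely what the assumption supplies. The two families of integrals become tails $\int_R^\infty$ of the Lebesgue-integrable functions $f^{(n)}(y)/y^{n+2l}$ and $f^{(k)}(y)/y^{k+2n}$, whose integrability on $[b,\infty)$ was established in the estimates \eqref{Eq_Integral_5} and \eqref{Eq_Integral_4} (evaluated at $\varepsilon=0$, i.e. $y_\varepsilon=y$) using $n-\alpha>1$; hence these tails tend to $0$. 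Consequently $\mathcal{I}_{a,R}(f)\to 0$, and letting $R\to\infty$ in the identity gives $\lim_{R\to\infty}\int_b^Re^{iay^2}f(y)dy=\mathcal{I}_{a,b}(f)$.

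The main obstacle is only the bookkeeping in the first two paragraphs: one has to check that $\mathcal{I}_{a,R}(f)$ is genuinely covered by Theorem~\ref{thm_Integral} (so that the splitting of the $\varepsilon$-limit is valid) and that the decay hypothesis \eqref{Eq_Riemann_assumption} is strong enough to make \emph{all} boundary terms vanish, not merely those with $l=0$. Both points are immediate once noticed, so the argument is short; the substantive analytic input is entirely borrowed from Theorem~\ref{thm_Integral}.
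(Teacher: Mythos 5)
Your proof is correct, and it relies on the same two analytic ingredients as the paper's own argument: the explicit representation \eqref{Eq_Integral_2} from the proof of Theorem~\ref{thm_Integral} (applied with lower endpoint $R$), together with the hypothesis \eqref{Eq_Riemann_assumption} killing the boundary terms and the $L^1$-tail argument killing the integral terms. The organization, however, is genuinely different. The paper stays at the level of the regularized integrals: it subtracts the identity \eqref{Eq_Integral_formula} (with $y_0=0$, $m=n-1$) taken at $b$ and at $R$, obtains an explicit formula for $\int_b^R e^{(ia-\varepsilon)y^2}f(y)\,dy$ with all boundary and integral terms written out, lets $\varepsilon\to 0^+$ on the finite interval, then lets $R\to\infty$ term by term, and finally recognizes the limit as \eqref{Eq_Integral_2}. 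You instead take the limit $\varepsilon\to 0^+$ first, invoking Theorem~\ref{thm_Integral} twice --- on $[b,\infty)$ and, via Corollary~\ref{cor_Reducing_interval}, on $[R,\infty)$ --- to justify splitting the limit of the sum, which yields the clean identity $\mathcal{I}_{a,b}(f)=\int_b^R e^{iay^2}f(y)\,dy+\mathcal{I}_{a,R}(f)$; the corollary then reduces to the single claim $\lim_{R\to\infty}\mathcal{I}_{a,R}(f)=0$, read off from \eqref{Eq_Integral_2} at $R$. What your route buys is lighter bookkeeping (no need to write out the subtracted formula on $[b,R]$) and a cleanly isolated, reusable fact, namely the vanishing of the tail oscillatory integral; what the paper's route buys is that it never needs $\mathcal{I}_{a,R}(f)$ to exist as a separate object, since all manipulations happen at fixed $\varepsilon>0$ where they are elementary. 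Your observation that \eqref{Eq_Riemann_assumption} is exactly what is needed for the $l=0$ boundary terms, the cases $l\geq 1$ coming for free, is also precisely how the hypothesis enters the paper's proof.
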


\begin{proof}
The formula \eqref{Eq_Integral_formula} with $m=n-1$ and $y_0=0$, i.e. $y_\varepsilon=y$ and $b_\varepsilon=b$, is given by
\begin{equation}\label{Eq_Integral_Riemann_2}
\begin{split}
I_{a,b}^{\varepsilon,0}(f)&=\sum\limits_{k=0}^{n-1}\sum\limits_{l=0}^{m-1}\frac{C_{k,l}e^{(ia-\varepsilon)b^2}f^{(k)}(b)}{(ia-\varepsilon)^{k+1+l}b^{k+1+2l}}+\sum\limits_{l=0}^{n-1}\frac{C_{n-1,l}}{(ia-\varepsilon)^{n+l}}I_{a,b}^{\varepsilon,0}\Big(\frac{f^{(n)}}{y^{n+2l}}\Big) \\
&\quad-\sum\limits_{k=0}^{n-1}\frac{(k-1+2n)C_{k,n-1}}{(ia-\varepsilon)^{k+n}}I_{a,b}^{\varepsilon,0}\Big(\frac{f^{(k)}}{y^{k+2n}}\Big).
\end{split}
\end{equation}
Let now $R>b$ be arbitrary, and subtract the identity \eqref{Eq_Integral_Riemann_2} from the same one with $b$ replaced by $R$. This gives
\begin{align*}
\int_b^Re^{(ia-\varepsilon)y^2}f(y)dy&=I_{a,b}^{\varepsilon,0}(f)-I_{a,R}^{\varepsilon,0}(f) \\
&=\sum\limits_{k=0}^{n-1}\sum\limits_{l=0}^{n-1}\frac{C_{k,l}}{(ia-\varepsilon)^{k+1+l}}\Big(\frac{e^{(ia-\varepsilon)b^2}f^{(k)}(b)}{b^{k+1+2l}}-\frac{e^{(ia-\varepsilon)R^2}f^{(k)}(R)}{R^{k+1+2l}}\Big) \\
&\quad+\sum\limits_{l=0}^{n-1}\frac{C_{n-1,l}}{(ia-\varepsilon)^{n+l}}\int_b^Re^{(ia-\varepsilon)y^2}\frac{f^{(n)}(y)}{y^{n+2l}}dy \\
&\quad-\sum\limits_{k=0}^{n-1}\frac{(k-1+2n)C_{k,n-1}}{(ia-\varepsilon)^{k+n}}\int_b^Re^{(ia-\varepsilon)y^2}\frac{f^{(k)}(y)}{y^{k+2n}}dy.
\end{align*}
Since the integration intervals are all finite, each integrand is absolutely integrable also for $\varepsilon=0$. Thus, the limit $\varepsilon\to 0^+$ exists and has the form
\begin{equation}\label{Eq_Integral_Riemann_1}
\begin{split}
\int_b^Re^{iay^2}f(y)dy&=\sum\limits_{k=0}^{n-1}\sum\limits_{l=0}^{n-1}\frac{C_{k,l}}{(ia)^{k+1+l}}\Big(\frac{e^{iab^2}f^{(k)}(b)}{b^{k+1+2l}}-\frac{e^{iaR^2}f^{(k)}(R)}{R^{k+1+2l}}\Big) \\
&\quad+\sum\limits_{l=0}^{n-1}\frac{C_{n-1,l}}{(ia)^{n+l}}\int_b^Re^{iay^2}\frac{f^{(n)}(y)}{y^{n+2l}}dy\\
&\quad -\sum\limits_{k=0}^{n-1}\frac{(k+2n-1)C_{k,n-1}}{(ia)^{k+n}}\int_b^Re^{iay^2}\frac{f^{(k)}(y)}{y^{k+2n}}dy. 
\end{split}
\end{equation}
From \eqref{Eq_fk_estimate} and $n>\alpha+1$ it follows that $\frac{f^{(n)}}{y^{n+2l}},\frac{f^{(k)}}{y^{k+2n}}\in L^1([b,\infty))$, for every $l,k\in\{0,\dots,n-1\}$, and hence we have
\begin{equation*}
\lim\limits_{R\to\infty}\int_b^Re^{iay^2}\frac{f^{(n)}(y)}{y^{n+2l}}dy=\int_b^\infty e^{iay^2}\frac{f^{(n)}(y)}{y^{n+2l}}dy
\end{equation*}
and
\begin{equation*}
\lim\limits_{R\to\infty}\int_b^Re^{iay^2}\frac{f^{(k)}(y)}{y^{k+2n}}dy=\int_b^\infty e^{iay^2}\frac{f^{(k)}(y)}{y^{k+2n}}dy.
\end{equation*}
From the asymptotic conditions \eqref{Eq_Riemann_assumption} we obtain
\begin{equation*}
\lim\limits_{R\to\infty}\frac{f^{(k)}(R)}{R^{k+1+2l}}=0,\qquad k,l\in\{0,\dots,n-1\},
\end{equation*}
and therefore \eqref{Eq_Integral_Riemann_1} in the limit $R\to\infty$ becomes
\begin{align*}
\lim\limits_{R\to\infty}\int_b^Re^{iay^2}f(y)dy&=\sum\limits_{k=0}^{n-1}\sum\limits_{l=0}^{n-1}\frac{C_{k,l}e^{iab^2}f^{(k)}(b)}{(ia)^{k+1+l}b^{k+1+2l}}+\sum\limits_{l=0}^{n-1}\frac{C_{n-1,l}}{(ia)^{n+l}}\int_b^\infty e^{iay^2}\frac{f^{(n)}(y)}{y^{n+2l}}dy \\
&\quad-\sum\limits_{k=0}^{n-1}\frac{(k+2n-1)C_{k,n-1}}{(ia)^{k+n}}\int_b^\infty e^{iay^2}\frac{f^{(k)}(y)}{y^{k+2n}}dy.
\end{align*}
Finally, since the above expression coincides with the representation \eqref{Eq_Integral_2} of the oscillatory integral $\mathcal{I}_{a,b}(f)$ we conclude
\begin{equation*}
\lim\limits_{R\to\infty}\int_b^Re^{iay^2}f(y)dy=\mathcal{I}_{a,b}(f)=\lim\limits_{\varepsilon\to 0^+}I_{a,b}^{\varepsilon,0}(f)=\lim\limits_{\varepsilon\to 0^+}\int_b^\infty e^{-\varepsilon y^2}e^{iay^2}f(y)dy. \qedhere
\end{equation*}
\end{proof}

In the next proposition we show that for a convergent sequence of functions $(f_m)_m$ also the sequence of oscillatory integrals $\mathcal{I}_{a,b}(f_m)$ converges.

\begin{prop}\label{contiprop}
Let $a\in\mathbb{R}\setminus\{0\}$, $b>0$, $n\in\mathbb{N}$, $\alpha\geq 0$ with $n>\alpha+1$, and assume that $f,(f_m)_m\in C_\alpha^n([b,\infty))$ are such that
\begin{equation*}
\lim\limits_{m\to\infty}\Vert f-f_m\Vert_{C^n_\alpha([b,\infty))}=0.
\end{equation*}
Then also the oscillatory integrals $\mathcal{I}_{a,b}(f_m)$ converge and one has 
\begin{equation*}
\mathcal{I}_{a,b}(f)=\mathcal{I}_{a,b}\Big(\lim_{m\to\infty}f_m\Big)=\lim\limits_{m\to\infty}\mathcal{I}_{a,b}(f_m).
\end{equation*}
\end{prop}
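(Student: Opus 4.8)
The plan is to observe that $\mathcal{I}_{a,b}$ is a bounded linear functional on the Banach space $C_\alpha^n([b,\infty))$, so that norm convergence of $(f_m)_m$ automatically yields convergence of the values $\mathcal{I}_{a,b}(f_m)$. All the analytic work has already been carried out in Theorem~\ref{thm_Integral}; what remains is essentially a short argument combining linearity with the uniform estimate \eqref{Eq_D}.

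First I would record that $\mathcal{I}_{a,b}$ is linear on $C_\alpha^n([b,\infty))$. For each fixed $\varepsilon>0$ and $y_0\in\mathbb{R}$ the regularized integral $I_{a,b}^{\varepsilon,y_0}$ in \eqref{Eq_Integral_epsilon} is an ordinary Lebesgue integral and hence linear in its argument. Since $C_\alpha^n([b,\infty))$ is a vector space, the difference $f-f_m$ again belongs to this space, and by Theorem~\ref{thm_Integral} the limits $\varepsilon\to 0^+$ defining $\mathcal{I}_{a,b}(f)$, $\mathcal{I}_{a,b}(f_m)$ and $\mathcal{I}_{a,b}(f-f_m)$ all exist. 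Taking the limit in the identity $I_{a,b}^{\varepsilon,y_0}(f-f_m)=I_{a,b}^{\varepsilon,y_0}(f)-I_{a,b}^{\varepsilon,y_0}(f_m)$ therefore gives
\begin{equation*}
\mathcal{I}_{a,b}(f)-\mathcal{I}_{a,b}(f_m)=\mathcal{I}_{a,b}(f-f_m),\qquad m\in\mathbb{N}.
\end{equation*}

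Next I would invoke the second estimate in \eqref{Eq_D}, which is applicable since $n>\alpha+1$ and $f-f_m\in C_\alpha^n([b,\infty))$, to bound
\begin{equation*}
|\mathcal{I}_{a,b}(f)-\mathcal{I}_{a,b}(f_m)|=|\mathcal{I}_{a,b}(f-f_m)|\leq D_{a,b,n,\alpha}\Vert f-f_m\Vert_{C_\alpha^n([b,\infty))}.
\end{equation*}
Letting $m\to\infty$ and using the hypothesis $\Vert f-f_m\Vert_{C_\alpha^n([b,\infty))}\to 0$ makes the right-hand side vanish, which proves $\mathcal{I}_{a,b}(f_m)\to\mathcal{I}_{a,b}(f)$ and hence the claimed interchange of limit and oscillatory integral (the first equality in the statement being just the identity $f=\lim_m f_m$ in norm).

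There is no genuine obstacle here: the whole difficulty was already resolved in establishing the $\varepsilon$-uniform bound \eqref{Eq_D} of Theorem~\ref{thm_Integral}, which says precisely that $\mathcal{I}_{a,b}$ is continuous with respect to the $C_\alpha^n$-norm. The only point requiring a moment's care is the justification of linearity, i.e.\ that the limit defining $\mathcal{I}_{a,b}$ may be distributed over the separately existing limits for $f$ and $f_m$; this is immediate once Theorem~\ref{thm_Integral} guarantees the existence of each individual limit.
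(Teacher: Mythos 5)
Your proof is correct, and it rests on the same key ingredient as the paper's proof, namely the norm estimate \eqref{Eq_D} from Theorem~\ref{thm_Integral}; but the logical packaging is different. The paper applies the \emph{first} inequality in \eqref{Eq_D} to $f_m-f$, obtaining $|I_{a,b}^{\varepsilon,y_0}(f_m)-I_{a,b}^{\varepsilon,y_0}(f)|\leq D_{a,b,n,\alpha}\Vert f-f_m\Vert_{C_\alpha^n([b,\infty))}$ uniformly in $0<\varepsilon\leq 1$, and then invokes a Moore--Osgood type interchange of the iterated limits $m\to\infty$ and $\varepsilon\to 0^+$. You instead take the $\varepsilon$-limit first: using linearity of the regularized Lebesgue integrals together with the existence (from Theorem~\ref{thm_Integral}) of the three limits defining $\mathcal{I}_{a,b}(f)$, $\mathcal{I}_{a,b}(f_m)$ and $\mathcal{I}_{a,b}(f-f_m)$, you get $\mathcal{I}_{a,b}(f)-\mathcal{I}_{a,b}(f_m)=\mathcal{I}_{a,b}(f-f_m)$, and then apply the \emph{second} inequality in \eqref{Eq_D} directly to $f-f_m$. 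This makes the limit-interchange step unnecessary, since it was in effect already carried out once and for all in the proof of Theorem~\ref{thm_Integral} when the bound on $\mathcal{I}_{a,b}$ itself was established; what you gain is a cleaner ``bounded linear functional'' formulation, while the paper's route has the virtue of exhibiting explicitly the $\varepsilon$-uniform convergence that justifies swapping the limits. Your only implicit obligations — that $C_\alpha^n([b,\infty))$ is a vector space so that $f-f_m$ lies in it, and that a limit of a difference is the difference of the (existing) limits — are both immediate, so there is no gap.
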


\begin{proof}
For every $m\in\mathbb{N}$ and $0<\varepsilon\leq 1$ we obtain from \eqref{Eq_D} the $\varepsilon$-uniform convergence
\begin{equation*}
|I_{a,b}^{\varepsilon,y_0}(f_m)-I_{a,b}^{\varepsilon,y_0}(f)|=|I_{a,b}^{\varepsilon,y_0}(f_m-f)|\leq D_{a,b,n,\alpha}\Vert f-f_m\Vert_{C_\alpha^n([b,\infty))}\to 0,\qquad\text{as }m\to\infty.
\end{equation*}
Hence we are allowed to interchange the limits $m\to\infty$ and $\varepsilon\to 0^+$, and get
\begin{equation*}
\lim\limits_{m\to\infty}\mathcal{I}_{a,b}(f_m)=\lim\limits_{m\to\infty}\lim\limits_{\varepsilon\to 0^+}I_{a,b}^{\varepsilon,y_0}(f_m)=\lim\limits_{\varepsilon\to 0^+}\lim\limits_{m\to\infty}I_{a,b}^{\varepsilon,y_0}(f_m)=\lim\limits_{\varepsilon\to 0^+}I_{a,b}^{\varepsilon,y_0}(f)=\mathcal{I}_{a,b}(f). \qedhere
\end{equation*}
\end{proof}

The following Theorem~\ref{thm_Integral_derivative} provides conditions under which the oscillatory integral \eqref{Eq_Integral} is absolutely continuous with respect to an additional parameter $s$, and we obtain a formula for its derivative (defined almost everywhere). Recall, that a function $f:I\to\mathbb{C}$, defined on some open interval $I\subseteq\mathbb{R}$, is said to be absolutely continuous if there exists $g\in L_\loc^1(I)$, such that
\begin{equation*}
f(s_2)-f(s_1)=\int_{s_1}^{s_2}g(s)ds,\qquad\text{for every }s_1<s_2\in I.
\end{equation*}
In this case the function $f$ is differentiable almost everywhere with derivative $f'=g$. We shall denote the space of absolutely continuous functions on $I$ by $\AC(I)$.

\begin{thm}\label{thm_Integral_derivative}
Let $I\subseteq\mathbb{R}$ be an open interval and $a,b\in\AC(I)$ with $a(s)\neq 0$ and $b(s)\geq b_0>0$ for every $s\in I$ and some positive constant $b_0$. Assume that $f:I\times[b_0,\infty)\to\mathbb{C}$ satisfies the following conditions (i) and (ii).

\begin{enumerate}
\item[(i)] $f(\,\cdot\,,y)\in\AC(I)$ for every $y\in[b_0,\infty)$;

\item[(ii)] There exist $n\in\mathbb{N}$, $\alpha\geq 0$ with $n>\alpha+3$, such that
\begin{equation*}
f(s,\cdot\,)\in C_\alpha^n([b_0,\infty))\quad\text{and}\quad\frac{\partial f}{\partial s}(s,\cdot\,)\in C_{\alpha+2}^n([b_0,\infty))\quad{\rm f.a.e.}\;s\in I,
\end{equation*}
and the norms $\Vert f(s,\cdot\,)\Vert_{C_\alpha^n([b_0,\infty))}$ and $\Vert\frac{\partial f}{\partial s}(s,\cdot\,)\Vert_{C_{\alpha+2}^n([b_0,\infty))}$ are locally bounded on $I$.
\end{enumerate}

\noindent Then the oscillatory integral
\begin{equation}\label{Eq_Integral_derivative}
\psi(s):=\mathcal{I}_{a(s),b(s)}(f(s,\cdot\,))=\lim\limits_{\varepsilon\to 0^+}I_{a(s),b(s)}^{\varepsilon,y_0}(f(s,\cdot\,))=\lim\limits_{\varepsilon\to 0^+}\int_{b(s)}^\infty e^{-\varepsilon(y-y_0)^2}e^{ia(s)y^2}f(s,y)dy
\end{equation}
exists for every $s\in I$, is independent of $y_0\in\mathbb{R}$, and defines a function $\psi\in\AC(I)$. Furthermore, the derivative of $\psi$ exists for almost every $s\in I$ and is given by
\begin{equation*}
\frac{d}{ds}\psi(s)=-b'(s)e^{ia(s)b(s)^2}f(s,b(s))+\lim\limits_{\varepsilon\to 0^+}\int_{b(s)}^\infty e^{-\varepsilon(y-y_0)^2}\frac{d}{ds}\big(e^{ia(s)y^2}f(s,y)\big)dy.
\end{equation*}
\end{thm}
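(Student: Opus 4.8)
The plan is to establish the result by differentiating the regularized integrals $I_{a(s),b(s)}^{\varepsilon,y_0}(f(s,\cdot))$ for fixed $\varepsilon>0$ — where the Gaussian factor turns every manipulation into an honest Lebesgue computation — and then to pass to the limit $\varepsilon\to 0^+$, the crux being the justification of interchanging $\lim_{\varepsilon\to 0^+}$ with $\tfrac{d}{ds}$. First I would note that pointwise existence and $y_0$-independence of $\psi(s)$ follow directly from Theorem~\ref{thm_Integral}: since $b(s)\geq b_0$, Corollary~\ref{cor_Reducing_interval} gives $f(s,\cdot)|_{[b(s),\infty)}\in C_\alpha^n([b(s),\infty))$, and $n>\alpha+3>\alpha+1$ is more than enough to apply the theorem.

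Next, for fixed $\varepsilon>0$ I would prove that $I^\varepsilon(s):=I_{a(s),b(s)}^{\varepsilon,y_0}(f(s,\cdot))$ is absolutely continuous on $I$ with the Leibniz derivative
\[
(I^\varepsilon)'(s)=-b'(s)e^{-\varepsilon(b(s)-y_0)^2}e^{ia(s)b(s)^2}f(s,b(s))+I_{a(s),b(s)}^{\varepsilon,y_0}\big(h(s,\cdot)\big),
\]
where $h(s,y):=ia'(s)y^2 f(s,y)+\tfrac{\partial f}{\partial s}(s,y)$ is chosen so that $\tfrac{d}{ds}\big(e^{ia(s)y^2}f(s,y)\big)=e^{ia(s)y^2}h(s,y)$. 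To make this rigorous in the merely absolutely continuous setting I would split $I^\varepsilon(s)=\int_{b_0}^\infty(\cdots)\,dy-\int_{b_0}^{b(s)}(\cdots)\,dy$. The fixed-limit term is handled by Fubini together with the $s$-absolute continuity of the integrand (guaranteed by (i) and $a\in\AC(I)$) and the $\varepsilon$-Gaussian domination of its $s$-derivative. For the variable-limit term I would invoke the chain rule for the composition $s\mapsto H(s,b(s))$ with $H(s,t):=\int_{b_0}^t(\cdots)\,dy$, which is licit because $H$ has bounded partials — hence is Lipschitz — on every compact box $[s_1,s_2]\times[b_0,B]$, and $b\in\AC(I)$.

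The decisive step is the limit $\varepsilon\to 0^+$. Using Corollary~\ref{cor_ypf} with $p=2$ one has $y^2 f(s,\cdot)\in C_{\alpha+2}^n$, and since $\tfrac{\partial f}{\partial s}(s,\cdot)\in C_{\alpha+2}^n$ by hypothesis, the combination satisfies $h(s,\cdot)\in C_{\alpha+2}^n([b_0,\infty))$ with $\Vert h(s,\cdot)\Vert_{C_{\alpha+2}^n}\leq C(|a'(s)|+1)$ for a locally bounded constant $C$; this is precisely where the growth is raised by two and the sharpened assumption $n>\alpha+3=(\alpha+2)+1$ is consumed, so that Theorem~\ref{thm_Integral} applies to $h(s,\cdot)$. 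Consequently $I_{a(s),b(s)}^{\varepsilon,y_0}(h(s,\cdot))\to\mathcal I_{a(s),b(s)}(h(s,\cdot))$ pointwise, while the uniform bound \eqref{Eq_D} (with $\alpha+2$ in place of $\alpha$), combined with the continuous dependence of $D_{a,b,n,\alpha+2}$ on $a,b$ and with Corollary~\ref{cor_Reducing_interval}, dominates $I_{a(s),b(s)}^{\varepsilon,y_0}(h(s,\cdot))$ uniformly in $\varepsilon\in(0,1]$ by the $L^1_\loc$ function $s\mapsto C'(|a'(s)|+1)$. The boundary term is dominated likewise by $|b'(s)|\,|f(s,b(s))|\in L^1_\loc$. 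Hence $(I^\varepsilon)'\to\chi$ in $L^1_\loc(I)$ with $\chi(s)=-b'(s)e^{ia(s)b(s)^2}f(s,b(s))+\mathcal I_{a(s),b(s)}(h(s,\cdot))$; together with the pointwise convergence $I^\varepsilon\to\psi$ this yields $\psi(s_2)-\psi(s_1)=\lim_{\varepsilon\to 0^+}\int_{s_1}^{s_2}(I^\varepsilon)'(s)\,ds=\int_{s_1}^{s_2}\chi(s)\,ds$, so $\psi\in\AC(I)$ and $\psi'=\chi$ almost everywhere, which is the asserted formula.

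I expect the main obstacle to be exactly this interchange of $\lim_{\varepsilon\to 0^+}$ and $\tfrac{d}{ds}$: everything hinges on a single $\varepsilon$-uniform, $s$-integrable majorant for the derivatives $(I^\varepsilon)'$, and such a majorant is available only because Theorem~\ref{thm_Integral} furnishes the bound \eqref{Eq_D} that is uniform in $\varepsilon$ and controls the oscillatory integral of the differentiated integrand $h$, whose polynomial growth has increased by two. A secondary, more routine difficulty is the careful application of the chain rule for the variable integration limit within the absolutely continuous category.
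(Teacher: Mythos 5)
Your proposal is correct and follows essentially the same route as the paper: both differentiate the regularized integral via the Leibniz rule with variable lower limit, split the $s$-derivative as $e^{ia(s)y^2}\big(ia'(s)y^2f+\tfrac{\partial f}{\partial s}\big)$, use Corollary~\ref{cor_ypf} to place this in $C_{\alpha+2}^n$ (which is exactly where $n>\alpha+3$ is consumed), and invoke the $\varepsilon$-uniform bound \eqref{Eq_D} from Theorem~\ref{thm_Integral} together with local integrability in $s$ and dominated convergence to identify $\psi(s_1)-\psi(s_0)$ with the integral of the claimed derivative. The only differences are organizational (you pass $(I^\varepsilon)'\to\chi$ in $L^1_\loc$ and then apply the fundamental theorem of calculus, while the paper integrates the limit candidate and unwinds it for fixed $\varepsilon$) and that you spell out the justification of the Leibniz rule in the absolutely continuous setting, which the paper simply asserts.
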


\begin{proof}
Since the oscillatory integrals are independent of the choice of $y_0\in\mathbb{R}$ (see Theorem~\ref{thm_Integral}) we will choose $y_0=0$ in this proof. For almost every $s\in I$, we can split the $s$-derivative of the integrand in \eqref{Eq_Integral_derivative} into
\begin{equation}\label{Eq_Integral_derivative_2}
\frac{d}{ds}\big(e^{ia(s)y^2}f(s,y)\big)=e^{ia(s)y^2} g(s,y),\qquad\text{where}\;g(s,y):=ia'(s)y^2f(s,y)+\frac{\partial f}{\partial s}(s,y).
\end{equation}
As $f(s,\cdot\,)\in C_\alpha^n([b_0,\infty))$ it follows from Corollary~\ref{cor_ypf} that $y^2f(s,\cdot\,)\in C_{\alpha+2}^n([b_0,\infty))$, with norm bounded by
\begin{equation*}
\Vert y^2f(s,\cdot\,)\Vert_{C_{\alpha+2}^n([b_0,\infty))}\leq(n+3)^n\Vert f(s,\cdot\,)\Vert_{C_\alpha^n([b_0,\infty))}.
\end{equation*}
Together with the assumption $\frac{\partial f}{\partial s}(s,\cdot\,)\in C_{\alpha+2}^n([b_0,\infty))$ we conclude $g(s,\cdot\,)\in C_{\alpha+2}^n([b_0,\infty))$, with norm bounded by
\begin{equation*}
\Vert g(s,\cdot\,)\Vert_{C_{\alpha+2}^n([b_0,\infty))}\leq|a'(s)|(n+3)^n\Vert f(s,\cdot\,)\Vert_{C_\alpha^n([b_0,\infty))}+\Big\Vert\frac{\partial f}{\partial s}(s,\cdot\,)\Big\Vert_{C_{n+2}^\alpha([b_0,\infty))}.
\end{equation*}
Since $\Vert f(s,\cdot\,)\Vert_{C_\alpha^n([b_0,\infty))}$ and $\Vert\frac{\partial f}{\partial s}(s,\cdot\,)\Vert_{C_{n+2}^\alpha([b_0,\infty))}$ are locally bounded by assumption, and since $a'\in L^1_\loc(I)$ due to $a\in\AC(I)$, it follows that the norm $\Vert g(s,\cdot\,)\Vert_{C_{\alpha+2}^n([b_0,\infty))}$ admits a locally integrable upper bound. By Corollary~\ref{cor_Reducing_interval} we have $g(s,\cdot\,)\in C_{\alpha+2}^n([b(s),\infty))$, with locally integrable upper bound
\begin{equation*}
\Vert g(s,\cdot\,)\Vert_{C_{\alpha+2}^n([b(s),\infty))}\leq(n+1)\Vert g(s,\cdot\,)\Vert_{C_{\alpha+2}^n([b_0,\infty))}.
\end{equation*}
Our assumption $n>(\alpha+2)+1$ ensures that Theorem~\ref{thm_Integral} can be applied in the present situation and hence we conclude for almost every $s\in I$ the existence of the oscillatory integral
\begin{equation*}
\mathcal{I}_{a(s),b(s)}(g(s,\cdot\,))=\lim\limits_{\varepsilon\to 0^+}I_{a(s),b(s)}^{\varepsilon,0}(g(s,\cdot\,))=\lim\limits_{\varepsilon\to 0^+}\int_{b(s)}^\infty e^{(ia(s)-\varepsilon)y^2}g(s,y)dy,
\end{equation*}
and for every $0<\varepsilon\leq 1$ we have the estimate
\begin{equation}\label{Eq_Integral_derivative_6}
\big|I_{a(s),b(s)}^{\varepsilon,0}(g(s,\cdot\,))\big|\leq D_{a(s),b(s),n,\alpha+2}(n+1)\Vert g(s,\cdot\,)\Vert_{C_{\alpha+2}^n([b_0,\infty))}.
\end{equation}
It follows from the continuity of $a(s),b(s)$ and the continuous dependency of $D_{a,b,n,\alpha+2}$ on $a,b$ in Theorem~\ref{thm_Integral}, that $s\mapsto D_{a(s),b(s),n,\alpha+2}$ is continuous. In particular, it follows that the right hand side of \eqref{Eq_Integral_derivative_6} is locally integrable over any compact subset $[s_0,s_1]\subseteq I$. The dominated convergence theorem then ensures that
\begin{equation}\label{Eq_Integral_derivative_1}
\int_{s_0}^{s_1}\mathcal{I}_{a(s),b(s)}(g(s,\cdot\,))ds=\int_{s_0}^{s_1}\lim\limits_{\varepsilon\to 0^+}I_{a(s),b(s)}^{\varepsilon,0}(g(s,\cdot\,))ds=\lim\limits_{\varepsilon\to 0^+}\int_{s_0}^{s_1}I_{a(s),b(s)}^{\varepsilon,0}(g(s,\cdot\,))ds.
\end{equation}
Next, the derivative of an integral with parameter dependent integrand and boundary is given by
\begin{align*}
\frac{d}{ds}\int_{b(s)}^\infty e^{(ia(s)-\varepsilon)y^2}f(s,y)dy&=-b'(s)e^{(ia(s)-\varepsilon)b(s)^2}f(s,b(s))+\int_{b(s)}^\infty e^{-\varepsilon y^2}\frac{d}{ds}\big(e^{ia(s)y^2}f(s,y)\big)dy \\
&=-b'(s)e^{(ia(s)-\varepsilon)b(s)^2}f(s,b(s))+I_{a(s),b(s)}^{\varepsilon,0}(g(s,\cdot\,)).
\end{align*}
Hence, equation \eqref{Eq_Integral_derivative_1} can be rewritten as
\begin{align*}
\int_{s_0}^{s_1}\mathcal{I}_{a(s),b(s)}(g(s,\cdot\,))ds&=\lim\limits_{\varepsilon\to 0^+}\int_{s_0}^{s_1}\hspace{-0.1cm}\bigg(\hspace{-0.05cm}b'(s)e^{(ia(s)-\varepsilon)b(s)^2}f(s,b(s))+\frac{d}{ds}\int_{b(s)}^\infty e^{(ia(s)-\varepsilon)y^2}f(s,y)dy\hspace{-0.05cm}\bigg)ds \\
&=\int_{s_0}^{s_1}b'(s)e^{ia(s)b(s)^2}f(s,b(s))ds+\lim\limits_{\varepsilon\to 0^+}\bigg[\int_{b(s)}^\infty e^{(ia(s)-\varepsilon)y^2}f(s,y)dy\bigg]\bigg|_{s=s_0}^{s_1} \\
&=\int_{s_0}^{s_1}b'(s)e^{ia(s)b(s)^2}f(s,b(s))ds+\psi(s)\Big|_{s=s_0}^{s_1},
\end{align*}
where in the last equation we used the definition of $\psi(s)$, and in the second equation we were allowed to carry the limit $\varepsilon\to 0^+$ inside the first integral because we integrate a continuous function over the compact interval $[s_0,s_1]$. This equation now shows that $\psi\in\AC(I)$ and together with \eqref{Eq_Integral_derivative_2} we conclude that the derivative is almost everywhere given by
\begin{align*}
\frac{d}{ds}\psi(s)&=-b'(s)e^{ia(s)b(s)^2}f(s,b(s))+\mathcal{I}_{a(s),b(s)}(g(s,\cdot\,)) \\
&=-b'(s)e^{ia(s)b(s)^2}f(s,b(s))+\lim\limits_{\varepsilon\to 0^+}\int_{b(s)}^\infty e^{-\varepsilon y^2}\frac{d}{ds}\big(e^{ia(s)y^2}f(s,y)\big)dy. \qedhere
\end{align*}
\end{proof}

The next result is of a similar nature as the previous theorem. Here we provide conditions on a function $f:\mathcal{U}\times[b,\infty)\to\mathbb{C}$ such that
the oscillatory integral $\mathcal{I}_{a,b}(f(z,\cdot\,))$ is holomorphic as a function in $z$.

\begin{thm}
Let $\mathcal{U}\subseteq\mathbb{C}$ be open, $a\in\mathbb{R}\setminus\{0\}$, and $b>0$. Assume that $f:\mathcal{U}\times[b,\infty)\to\mathbb{C}$ satisfies the following conditions (i) and (ii).

\begin{enumerate}
\item[(i)] $f(\,\cdot\,,y)$ is holomorphic on $\mathcal{U}$ for every $y\in[b,\infty)$;
\item[(ii)] There exist $n\in\mathbb{N}$, $\alpha\geq 0$ with $n>\alpha+1$, such that $f(z,\cdot\,)\in C_\alpha^n([b,\infty))$ for every $z\in\mathcal{U}$
and the norm $\Vert f(z,\cdot\,)\Vert_{C_\alpha^n([b,\infty))}$ is locally bounded on $\mathcal{U}$.
\end{enumerate}
Then the oscillatory integral
\begin{equation}\label{Eq_Psi_holomorphic}
\psi(z):=\mathcal{I}_{a,b}(f(z,\cdot\,))=\lim\limits_{\varepsilon\to 0^+}I_{a,b}^{\varepsilon,y_0}(f(z,\cdot))=\lim\limits_{\varepsilon\to 0^+}\int_b^\infty e^{-\varepsilon(y-y_0)^2}e^{iay^2}f(z,y)dy
\end{equation}
exists for every $z\in\mathcal{U}$, is independent of $y_0\in\mathbb{R}$, and defines a holomorphic function $\psi$.
\end{thm}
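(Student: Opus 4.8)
The plan is to realize $\psi$ as a locally uniform limit of holomorphic functions and then invoke the Weierstrass convergence theorem. Concretely, for fixed $\varepsilon>0$ and $y_0\in\mathbb{R}$ I would first show that the regularized integral $z\mapsto I_{a,b}^{\varepsilon,y_0}(f(z,\cdot\,))$ is holomorphic on $\mathcal{U}$, and then prove that $I_{a,b}^{\varepsilon,y_0}(f(z,\cdot\,))\to\psi(z)$ locally uniformly in $z$ as $\varepsilon\to 0^+$, where the existence and $y_0$-independence of the limit are already guaranteed by Theorem~\ref{thm_Integral}.

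For the first step I rely on the standard criterion for holomorphy of parameter integrals (Morera's theorem together with Fubini's theorem). The integrand $z\mapsto e^{-\varepsilon(y-y_0)^2}e^{iay^2}f(z,y)$ is holomorphic for every fixed $y$ by hypothesis (i), and for $z$ in a compact set $K\subseteq\mathcal{U}$ the bound \eqref{Eq_fk_estimate} with $k=0$ together with the local boundedness of $\Vert f(z,\cdot\,)\Vert_{C_\alpha^n([b,\infty))}$ gives $|e^{-\varepsilon(y-y_0)^2}e^{iay^2}f(z,y)|\le C_K\,e^{-\varepsilon(y-y_0)^2}y^{n+\alpha}$, which is integrable over $[b,\infty)$ and independent of $z\in K$. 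Hence $\oint_{\partial\Delta}I_{a,b}^{\varepsilon,y_0}(f(z,\cdot\,))\,dz=0$ for every triangle $\Delta\subseteq\mathcal{U}$ by Fubini and Cauchy's theorem, and Morera yields holomorphy. Note that this step uses only the holomorphy in $z$ and does not require differentiating $f$ in $y$.

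The heart of the argument is the second step. Here I use the explicit representation \eqref{Eq_Integral_1} of $I_{a,b}^{\varepsilon,y_0}(f(z,\cdot\,))$ (with $m=n-1$) and its limit \eqref{Eq_Integral_2} for $\psi(z)$. The boundary terms factor into a $z$-independent, $\varepsilon$-dependent coefficient (containing $e^{-\varepsilon(b-y_0)^2}$, $(ia-\varepsilon)^{-(k+1+l)}$, $b_\varepsilon^{-(k+1+2l)}$) times $f^{(k)}(z,b)$, which is locally bounded in $z$ by \eqref{Eq_fk_estimate}; since the coefficients converge to their $\varepsilon=0$ values uniformly in $z$, these terms converge locally uniformly. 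For the integral terms I factor out the common oscillatory phase $e^{iay^2}$ of modulus one and estimate $|f^{(n)}(z,y)|\le C_K y^\alpha$, respectively $|f^{(k)}(z,y)|\le C_K y^{n-k+\alpha}$, for $z\in K$. This reduces the $z$-dependence to the prefactor $C_K$ and leaves a purely $z$-independent remainder such as $\int_b^\infty y^\alpha\big|e^{-\varepsilon(y-y_0)^2}y_\varepsilon^{-(n+2l)}-y^{-(n+2l)}\big|\,dy$, which tends to $0$ as $\varepsilon\to 0^+$ by dominated convergence, using the $\varepsilon$-uniform domination already established in \eqref{Eq_Integral_5}. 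Consequently the difference $I_{a,b}^{\varepsilon,y_0}(f(z,\cdot\,))-\psi(z)$ is bounded on $K$ by a $z$-independent quantity vanishing as $\varepsilon\to 0^+$.

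Finally, applying the above to a sequence $\varepsilon_j\downarrow 0$, the functions $z\mapsto I_{a,b}^{\varepsilon_j,y_0}(f(z,\cdot\,))$ are holomorphic and converge locally uniformly to $\psi$, so $\psi$ is holomorphic by the Weierstrass convergence theorem. I expect the main obstacle to be precisely this second step, namely verifying that the dominated-convergence passage $\varepsilon\to 0^+$ is uniform in $z$ on compact subsets of $\mathcal{U}$; the key point making this work is that after factoring out the locally bounded factors $f^{(k)}(z,\cdot\,)$ via \eqref{Eq_fk_estimate}, the remaining integrals no longer depend on $z$, so the local boundedness of $\Vert f(z,\cdot\,)\Vert_{C_\alpha^n([b,\infty))}$ upgrades pointwise convergence to locally uniform convergence.
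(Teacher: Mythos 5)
Your proof is correct, but it follows a genuinely different route than the paper. The paper applies Morera's theorem \emph{directly to} $\psi$: for a closed triangle $\Delta\subseteq\mathcal{U}$ it writes $\oint_{\partial\Delta}\psi(z)\,dz=\lim_{\varepsilon\to 0^+}\oint_{\partial\Delta}I_{a,b}^{\varepsilon,y_0}(f(z,\cdot\,))\,dz$, justifying the interchange by dominated convergence on the compact contour via the $\varepsilon$-uniform bound \eqref{Eq_D} together with the local boundedness of $\Vert f(z,\cdot\,)\Vert_{C_\alpha^n([b,\infty))}$; it then swaps the contour and $y$-integrals by Fubini (using \eqref{Eq_fk_estimate} with $k=0$, exactly as in your first step) and kills the inner integral by Cauchy's theorem, so $\oint_{\partial\Delta}\psi=0$. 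In this way the paper never needs holomorphy of the regularized integrals nor locally uniform convergence: pointwise convergence plus the uniform bound \eqref{Eq_D} suffice, and Theorem~\ref{thm_Integral} is used only as a black box. You instead prove the stronger statement that $I_{a,b}^{\varepsilon,y_0}(f(z,\cdot\,))\to\psi(z)$ locally uniformly, which forces you back inside the proof of Theorem~\ref{thm_Integral}, i.e.\ into the representation \eqref{Eq_Integral_1}--\eqref{Eq_Integral_2}, redoing its convergence analysis in a $z$-uniform form (your sketch should also note that the $\varepsilon$-dependent prefactors $(ia-\varepsilon)^{-(n+l)}$ in front of the integral terms, not just the integrands, must be compared with their $\varepsilon=0$ limits — a trivial but necessary addition handled exactly like your boundary-term coefficients). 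The payoff of your approach is the extra information of locally uniform convergence of the regularizations, which for instance yields convergence of all derivatives $\frac{d^k}{dz^k}I_{a,b}^{\varepsilon,y_0}(f(z,\cdot\,))\to\psi^{(k)}(z)$; the cost is length. Note also that your heavy second step could be avoided entirely: once your first step (holomorphy of each $I_{a,b}^{\varepsilon,y_0}(f(\,\cdot\,,\cdot\,))$) is done, the bound \eqref{Eq_D} gives local uniform boundedness, and Vitali's (or Montel's) theorem upgrades the pointwise convergence from Theorem~\ref{thm_Integral} to locally uniform convergence for free, making your route essentially as short as the paper's.
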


\begin{proof}
Assumption (ii) and Theorem~\ref{thm_Integral} show that the limit in \eqref{Eq_Psi_holomorphic} exists for every $z\in\mathcal{U}$, is independent of $y_0\in\mathbb{R}$, and for every $0<\varepsilon\leq 1$ one has the estimate
\begin{equation}\label{Eq_Integral_holomorphic_1}
|I_{a,b}^{\varepsilon,y_0}(f(z,\cdot\,))|\leq D_{a,b,n,\alpha}\Vert f(z,\cdot\,)\Vert_{C_\alpha^n([b,\infty))}.
\end{equation}
In order to show that the resulting function $\psi$ is holomorphic, we integrate $\psi$ along the boundary of an arbitrary closed triangle $\Delta\subseteq\mathcal{U}$, i.e. we consider
\begin{align}
\oint_{\partial\Delta}\psi(z)dz&=\oint_{\partial\Delta}\lim\limits_{\varepsilon\to 0^+}I_{a,b}^{\varepsilon,y_0}(f(z,\cdot\,))dz=\lim\limits_{\varepsilon\to 0^+}\oint_{\partial\Delta}I_{a,b}^{\varepsilon,y_0}(f(z,\cdot\,))dz \notag \\
&=\lim\limits_{\varepsilon\to 0^+}\oint_{\partial\Delta}\int_b^\infty e^{-\varepsilon(y-y_0)^2}e^{iay^2}f(z,y)dydz, \label{Eq_Integral_holomorphic_2}
\end{align}
where in the second equality we were allowed to interchange the limit and integral since the right hand side of the $\varepsilon$-uniform estimate \eqref{Eq_Integral_holomorphic_1} is integrable over the compact boundary $\partial\Delta$ by assumption (ii). Next, for every $\varepsilon>0$, using \eqref{Eq_fk_estimate} we can estimate
\begin{equation*}
\big|e^{-\varepsilon(y-y_0)^2}e^{iay^2}f(z,y)\big|\leq e^{-\varepsilon(y-y_0)^2}\Vert f(z,\cdot)\Vert_{C_\alpha^n([b,\infty))}y^{n+\alpha},\qquad z\in\partial\Delta,\,y\in[b,\infty).
\end{equation*}
Since the right hand side is integrable over $\partial\Delta\times[b,\infty)$ we are also allowed to interchange the order of integration in \eqref{Eq_Integral_holomorphic_2}. Hence, we conclude
\begin{equation*}
\oint_{\partial\Delta}\psi(z)dz=\lim\limits_{\varepsilon\to 0^+}\int_b^\infty e^{-\varepsilon(y-y_0)^2}e^{iay^2}\bigg(\oint_{\partial\Delta}f(z,y)dz\bigg)dy=0
\end{equation*}
where we have used $\oint_{\partial\Delta}f(z,y)dz=0$ as $f(\,\cdot\,,y)$ is holomorphic on $\mathcal{U}$ by assumption (i). Now Morera's theorem implies that the function $\psi$ is holomorphic on $\mathcal{U}$.
\end{proof}

\section{The one dimensional time dependent Schrödinger equation}\label{sec_Time_dependent_Schroedinger_equation}

In this section we will use the general theory from Section~\ref{sec_Oscillatory_integrals} to express the solution of the time dependent Schrödinger equation 
\begin{equation}\label{Eq_Schroedinger_G}
\begin{split}
i\frac{\partial}{\partial t}\Psi(t,x)&=\Big(-\frac{\partial^2}{\partial x^2}+V(t,x)\Big)\Psi(t,x),\qquad {\rm f.a.e.}\;t\in(0,T),\,x\in\mathbb{R}, \\
\Psi(0,x)&=F(x),\hspace{3.9cm} x\in\mathbb{R},
\end{split}
\end{equation}
for some measurable potential $V:(0,T)\times\mathbb{R}\to\mathbb{C}$, $T\in (0,\infty]$, via an oscillatory integral involving the Green's function $G$ and the initial condition $F$; cf. Theorem~\ref{thm_Psi} below. For our purposes it is convenient to use the space
\begin{equation*}
\AC_{1,2}((0,T)\times\mathbb{R}):=\Set{\Psi:(0,T)\times\mathbb{R}\to\mathbb{C} | \hspace{-0.2cm}\begin{array}{l} \Psi(\,\cdot\,,x)\in\AC((0,T)),\,x\in\mathbb{R} \\ \Psi(t,\cdot\,),\frac{\partial\Psi}{\partial x}(t,\cdot\,)\in\AC(\mathbb{R}),\,t\in(0,T) \end{array} \hspace{-0.2cm}},
\end{equation*}
which was also considered in  \cite{ABCS22} (and in a similar form in \cite{S22}). In fact, in \cite{ABCS22,S22} the solution of \eqref{Eq_Schroedinger_G} is also represented as an integral of the form \eqref{Eq_Psi}, but only for holomorphic Green's functions and initial conditions. Theorem~\ref{thm_Psi} and Theorem~\ref{thm_Psi_continuous_dependency} below can be viewed as improvements in the sense that the regularity requirements on the Green's function in Assumption~\ref{ass_Greensfunction} and the initial condition in Theorem~\ref{thm_Psi} and Theorem~\ref{thm_Psi_continuous_dependency} are less restrictive.

\begin{ass}\label{ass_Greensfunction}
Let $T\in(0,\infty]$ and suppose that the Green's function
\begin{equation*}
G:(0,T)\times\mathbb{R}\times\mathbb{R}\to\mathbb{C},
\end{equation*}
satisfies the following conditions (i)--(iii).

\begin{enumerate}
\item[(i)] For every $y\in\mathbb{R}$ the function $G(\,\cdot\,,\,\cdot\,,y)\in\AC_{1,2}((0,T)\times\mathbb{R})$ is a solution of the time dependent Schrödinger equation
in \eqref{Eq_Schroedinger_G};

\item[(ii)] For every $x_0>0$ and $\varphi\in C^1([-x_0,x_0])$ one has
\begin{equation}\label{Eq_Initial_G}
\lim\limits_{t\to 0^+}\int_{-x_0}^{x_0}G(t,x,y)\varphi(y)dy=\varphi(x),\qquad x\in(-x_0,x_0);
\end{equation}

\item[(iii)] For some $a\in\AC((0,T))$ with $a(t)>0$, $t\in (0,T)$, and $\lim_{t\to 0^+}a(t)=\infty$ one has the decomposition
\begin{equation}\label{Eq_G_decomposition}
G(t,x,y)=e^{ia(t)(y-x)^2}\widetilde{G}(t,x,y),\qquad t\in(0,T),\,x,y\in\mathbb{R},
\end{equation}
with a function $\widetilde{G}$, which for some $\alpha\geq 0$, $n\in\mathbb{N}$ with $n>\alpha+3$ satisfies
\begin{subequations}\label{Eq_G_estimates}
\begin{align}
\widetilde{G}(t,x,\cdot\,)&\in C^n(\mathbb{R},r^\alpha), \label{Eq_G_estimate} \\
\widetilde{G}_x(t,x,\cdot\,)&\in C^n(\mathbb{R},r^{\alpha+1}), \label{Eq_Gx_estimate} \\
\widetilde{G}_{xx}(t,x,\cdot\,),\widetilde{G}_t(t,x,\cdot\,)&\in C^n(\mathbb{R},r^{\alpha+2}). \label{Eq_Gxx_Gt_estimate}
\end{align}
\end{subequations}
Moreover, the norms $\Vert\widetilde{G}(t,x,\cdot\,)\Vert_{C^n(\mathbb{R},r^\alpha)}$, $\Vert\widetilde{G}_x(t,x,\cdot\,)\Vert_{C^n(\mathbb{R},r^{\alpha+1})}$, $\Vert\widetilde{G}_{xx}(t,x,\cdot\,)\Vert_{C^n(\mathbb{R},r^{\alpha+2})}$, and $\Vert\widetilde{G}_t(t,x,\cdot\,)\Vert_{C^n(\mathbb{R},r^{\alpha+2})}$ are locally bounded on $(0,T)\times\mathbb{R}$ and 
\begin{equation}\label{Eq_Gtilde_initial}
\lim\limits_{t\to 0^+}\frac{\Vert \widetilde{G}(t,x,\cdot\,)\Vert_{C^n(\mathbb{R},r^\alpha)}}{a(t)}=0.
\end{equation}
\end{enumerate}
\end{ass}

\noindent The next theorem is the main result of this section.

\begin{thm}\label{thm_Psi}
Let $T\in(0,\infty]$ and $G:(0,T)\times\mathbb{R}\times\mathbb{R}\to\mathbb{C}$ be as in 
Assumption~\ref{ass_Greensfunction}, where \eqref{Eq_G_estimates} holds for some $\alpha\geq 0$, $n\in\mathbb{N}$ with $n>\alpha+3$. Then for every $F\in C^n(\mathbb{R},r^\beta)$, $0\leq\beta<n-\alpha-3$, the limit
\begin{equation}\label{Eq_Psi}
\Psi(t,x):=\lim\limits_{\varepsilon\to 0^+}\int_\mathbb{R}e^{-\varepsilon y^2}G(t,x,y)F(y)dy,\qquad t\in(0,T),\,x\in\mathbb{R},
\end{equation}
exists and yields a solution $\Psi\in\AC_{1,2}((0,T)\times\mathbb{R})$ of the Schrödinger equation \eqref{Eq_Schroedinger_G}.
\end{thm}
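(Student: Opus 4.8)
The plan is to reduce everything to the one–sided oscillatory integrals $\mathcal{I}_{a,b}$ of Section~\ref{sec_Oscillatory_integrals} and to differentiate under them. First I would insert the decomposition \eqref{Eq_G_decomposition} and expand the phase as $e^{ia(t)(y-x)^2}=e^{ia(t)y^2}e^{-2ia(t)xy}e^{ia(t)x^2}$, so that
\[
\int_\mathbb{R}e^{-\varepsilon y^2}G(t,x,y)F(y)\,dy=\int_\mathbb{R}e^{-\varepsilon y^2}e^{ia(t)y^2}\widehat{G}(t,x,y)F(y)\,dy,\qquad \widehat{G}(t,x,y):=e^{ia(t)x^2}e^{-2ia(t)xy}\widetilde{G}(t,x,y).
\]
The point of this rewriting is that the quadratic phase $e^{ia(t)y^2}$ is now independent of $x$ and centred at the fixed point $y=0$. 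Splitting $\int_\mathbb{R}=\int_{-\infty}^{-b}+\int_{-b}^{b}+\int_b^\infty$ for a fixed $b>0$ and reflecting the left tail $y\mapsto-y$, the two tails become one–sided integrals of the form \eqref{Eq_Integral_epsilon}, while the middle integral over the compact set $[-b,b]$ is absolutely convergent with immediate $\varepsilon\to0^+$ limit. By Lemma~\ref{lem_ymf_expf}(ii) the factor $e^{-2ia(t)xy}$ keeps $\widehat{G}(t,x,\cdot)\in C^n(\mathbb{R},r^\alpha)$, by Proposition~\ref{prop_Cn_product} the product $\widehat{G}(t,x,\cdot)F\in C^n(\mathbb{R},r^{\alpha+\beta})$, and by Lemma~\ref{lem_Cnr_Cnalpha} its restriction to $[b,\infty)$ lies in $C_{\alpha+\beta}^n([b,\infty))$. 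Since $\beta<n-\alpha-3$ gives $\alpha+\beta+1<n$, Theorem~\ref{thm_Integral} applies and yields the existence of the limit \eqref{Eq_Psi} together with a representation of $\Psi$ as a sum of two oscillatory integrals and one ordinary integral.

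For the regularity and the equation the key observation is that all the needed derivatives of $G$ keep the same quadratic phase and only modify the amplitude by polynomial factors $(y-x),(y-x)^2$ and by the $x$- and $t$-derivatives of $\widetilde{G}$, never by $y$-derivatives of $\widetilde{G}$. Indeed $\partial_xG=e^{ia(t)(y-x)^2}\big(-2ia(t)(y-x)\widetilde{G}+\widetilde{G}_x\big)$, and differentiating once more in $x$ or once in $t$ produces amplitudes built from $\widetilde{G},\widetilde{G}_x,\widetilde{G}_{xx},\widetilde{G}_t$ and powers $(y-x)^j$. Using Lemma~\ref{lem_ymf_expf}(i) and the estimates \eqref{Eq_G_estimates}, these amplitudes lie in $C^n(\mathbb{R},r^{\alpha+1})$ for $\partial_x$ and in $C^n(\mathbb{R},r^{\alpha+2})$ for $\partial_x^2$ and $\partial_t$; multiplied by $F$ and restricted to $[b,\infty)$ they are in $C_{\alpha+\beta+1}^n$ resp. $C_{\alpha+\beta+2}^n$, and $n>\alpha+\beta+3$ is exactly the threshold making the associated oscillatory integrals converge. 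For the time direction I would apply Theorem~\ref{thm_Integral_derivative} with $s=t$, $a(s)=a(t)$ and $b(s)\equiv b$ (so the boundary term vanishes), obtaining $\Psi(\,\cdot\,,x)\in\AC((0,T))$ and $\partial_t\Psi=\lim_{\varepsilon\to0^+}\int_\mathbb{R}e^{-\varepsilon y^2}\partial_tG\,F\,dy$. For the two $x$-derivatives I would instead exploit that $e^{ia(t)y^2}$ does not depend on $x$: writing $\Psi(t,x_1)-\Psi(t,x_0)$ as the oscillatory integral of $\big(\int_{x_0}^{x_1}\partial_\xi\widehat{G}(t,\xi,\cdot)\,d\xi\big)F$, a Fubini interchange at the regularised level (justified by the uniform bound \eqref{Eq_D} and the local boundedness of the norms in Assumption~\ref{ass_Greensfunction}) followed by $\varepsilon\to0^+$ gives $\Psi(t,\cdot)\in\AC(\mathbb{R})$ with $\partial_x\Psi=\lim_{\varepsilon}\int e^{-\varepsilon y^2}\partial_xG\,F\,dy$; the same argument applied to $\partial_x\Psi$ yields $\partial_x\Psi(t,\cdot)\in\AC(\mathbb{R})$ and the analogue for $\partial_x^2\Psi$ (the compact middle integral being handled by elementary differentiation under the integral). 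Finally, since $i\partial_tG+\partial_x^2G-VG=0$ for every $y$ by Assumption~\ref{ass_Greensfunction}(i), multiplying by $e^{-\varepsilon y^2}F$, integrating and letting $\varepsilon\to0^+$ term by term gives $i\partial_t\Psi=-\partial_x^2\Psi+V\Psi$ for almost every $(t,x)$.

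It remains to verify the initial condition $\Psi(0,x)=F(x)$. Here I would not use the splitting at $y=0$, but fix $x$, choose $x_0>|x|$ and write $\Psi(t,x)=\int_{-x_0}^{x_0}GF\,dy+\int_{|y|>x_0}GF\,dy$. The local part converges to $F(x)$ by Assumption~\ref{ass_Greensfunction}(ii) applied to $\varphi=F|_{[-x_0,x_0]}\in C^1$. For the two tails, after the substitution $z=y-x$ (whose lower limits $x_0\mp x$ are positive) they are oscillatory integrals $\mathcal{I}_{a(t),x_0\mp x}$, and Theorem~\ref{thm_Integral} together with Lemma~\ref{lem_Cn_shifted} and Proposition~\ref{prop_Cn_product} bounds them by $D_{a(t),\dots,\alpha+\beta}\,\Vert\widetilde{G}(t,x,\cdot)\Vert_{C^n(\mathbb{R},r^\alpha)}\Vert F\Vert_{C^n(\mathbb{R},r^\beta)}$. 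Since $D_{a,b,n,\alpha+\beta}=\mathcal{O}(1/|a|)$ by \eqref{Eq_D_asymptotics} and $\Vert\widetilde{G}(t,x,\cdot)\Vert/a(t)\to0$ by \eqref{Eq_Gtilde_initial}, both tails vanish as $t\to0^+$, so $\Psi(t,x)\to F(x)$.

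The main obstacle is the regularity bookkeeping under the sharp constraint $n>\alpha+\beta+3$, and two points require care. First, one must differentiate $G$ without ever differentiating $\widetilde{G}$ in $y$; this is why the derivatives are organised as $e^{ia(t)(y-x)^2}$ times polynomial-in-$(y-x)$ combinations of $\widetilde{G},\widetilde{G}_x,\widetilde{G}_{xx},\widetilde{G}_t$, since a naive substitution $z=y-x$ performed \emph{before} differentiating would bring in $\widetilde{G}_y$, which is only $C^{n-1}$, and would lose a derivative. Second, a direct iteration of Theorem~\ref{thm_Integral_derivative} for the second $x$-derivative would demand $n>\alpha+\beta+4$, because the amplitude of $\partial_xG$ already carries weight $\alpha+\beta+1$; the integrate–then–interchange argument above circumvents this loss precisely because the quadratic phase is $x$-independent, so no extra $y^2$-factor (and hence no extra weight $+2$) is produced, and only the existence threshold $n>\alpha+\beta+3$ of Theorem~\ref{thm_Integral} is used.
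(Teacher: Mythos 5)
Your proposal is correct, and its skeleton coincides with the paper's own proof: the same re-centred amplitude $\widehat{G}(t,x,y)=e^{-ia(t)y^2}G(t,x,y)=e^{ia(t)(x^2-2xy)}\widetilde{G}(t,x,y)$, the same splitting into two tails plus a compact middle piece, Theorem~\ref{thm_Integral} (through Lemma~\ref{lem_ymf_expf}, Proposition~\ref{prop_Cn_product} and Lemma~\ref{lem_Cnr_Cnalpha}) for the existence of the tail integrals, Theorem~\ref{thm_Integral_derivative} with $s=t$ for the time derivative, and, for the initial condition, re-centring the phase at $y=x$ via the $y_0$-independence in Theorem~\ref{thm_Integral}, Lemma~\ref{lem_Cn_shifted}, the uniform bound \eqref{Eq_D}, and the combination of \eqref{Eq_D_asymptotics} with \eqref{Eq_Gtilde_initial} --- exactly Step 4 of the paper. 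The one genuine divergence is your treatment of the $x$-derivatives, and there your version is the more careful one. The paper obtains $\partial_x\Psi_1$ by applying Theorem~\ref{thm_Integral_derivative} with $s=x$ (weights $(\alpha+\beta,\alpha+\beta+1)$, admissible since $n>\alpha+\beta+3$), and then settles $\partial_x^2\Psi_1$ with the phrase ``in a similar way''; as you observe, a literal second application to the amplitude $\widehat{G}_xF\in C^n_{\alpha+\beta+1}([b,\infty))$ would require $n>(\alpha+\beta+1)+3$, which the hypothesis $\beta<n-\alpha-3$ does not supply. Your integrate-then-interchange argument (Newton--Leibniz in $x$ at the regularized level, Fubini, then $\varepsilon\to 0^+$ under the $\varepsilon$-uniform bound \eqref{Eq_D}) is precisely the proof of Theorem~\ref{thm_Integral_derivative} re-run in the special case of constant $a$ and $b$: because the phase $e^{ia(t)y^2}$ is $x$-independent, the term $ia'(s)y^2f$ never appears, each $x$-derivative costs only one power of $r(y)$ instead of two, and only the existence threshold of Theorem~\ref{thm_Integral} at weight $\alpha+\beta+2$, namely $n>\alpha+\beta+3$, is used. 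So both routes reach the same conclusion, but yours makes explicit the observation that the paper's ``similar way'' tacitly relies on; an equally economical fix would be to record a variant of Theorem~\ref{thm_Integral_derivative} for constant $a,b$ with hypotheses $f(s,\cdot\,)\in C^n_\gamma$, $\partial_sf(s,\cdot\,)\in C^n_{\gamma+1}$, $n>\gamma+2$, and apply it twice.
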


\begin{proof}
As a preparation for the main part of the proof, we set
\begin{equation}\label{Eq_Ghat}
\widehat{G}(t,x,y):=e^{-ia(t)y^2}G(t,x,y)=e^{ia(t)(x^2-2xy)}\widetilde{G}(t,x,y),
\end{equation}
which is, beside \eqref{Eq_G_decomposition}, another decomposition of the function $G$. We will prove that
\begin{subequations}\label{Eq_Ghat_estimates}
\begin{align}
\widehat{G}(t,x,\cdot\,)&\in C^n(\mathbb{R},r^\alpha), \label{Eq_Ghat_estimate} \\
\widehat{G}_x(t,x,\cdot\,)&\in C^n(\mathbb{R},r^{\alpha+1}) \label{Eq_Ghatx_estimate} \\
\widehat{G}_t(t,x,\cdot\,),\widehat{G}_{xx}(t,x,\cdot\,)&\in C^n(\mathbb{R},r^{\alpha+2}), \label{Eq_Ghatxx_Ghatt_estimate}
\end{align}
\end{subequations}
where the corresponding norms $\Vert\widehat{G}(t,x,\cdot\,)\Vert_{C^n(\mathbb{R},r^\alpha)}$, $\Vert\widehat{G}_x(t,x,\cdot\,)\Vert_{C^n(\mathbb{R},r^{\alpha+1})}$, $\Vert\widehat{G}_t(t,x,\cdot\,)\Vert_{C^n(\mathbb{R},r^{\alpha+2})}$, and $\Vert\widehat{G}_{xx}(t,x,\cdot\,)\Vert_{C^n(\mathbb{R},r^{\alpha+2})}$ are locally bounded on $(0,T)\times\mathbb{R}$. \medskip

Since $\widetilde{G}(t,x,\cdot\,)\in C^n(\mathbb{R},r^\alpha)$ by \eqref{Eq_G_estimate} we also have $\widehat{G}(t,x,\cdot\,)\in C^n(\mathbb{R},r^\alpha)$ by Lemma~\ref{lem_ymf_expf}~(ii), with norm bounded by
\begin{equation*}
\Vert\widehat{G}(t,x,\cdot\,)\Vert_{C^n(\mathbb{R},r^\alpha)}\leq(1+2a(t)|x|)^n\Vert\widetilde{G}(t,x,\cdot\,)\Vert_{C^n(\mathbb{R},r^\alpha)};
\end{equation*}
thus \eqref{Eq_Ghat_estimate} is clear. Since $a$ is continuous and $\Vert\widetilde{G}(t,x,\cdot\,)\Vert_{C^n(\mathbb{R},r^\alpha)}$ is assumed to be locally bounded on $(0,T)\times\mathbb{R}$, also $\Vert\widehat{G}(t,x,\cdot\,)\Vert_{C^n(\mathbb{R},r^\alpha)}$ is locally bounded on $(0,T)\times\mathbb{R}$. For the proof of \eqref{Eq_Ghatx_estimate}, we first compute the $x$-derivative of $\widehat{G}$ in \eqref{Eq_Ghat}, namely
\begin{equation*}
\widehat{G}_x(t,x,y)=2ia(t)(x-y)\widehat{G}(t,x,y)+e^{ia(t)(x^2-2xy)}\widetilde{G}_x(t,x,y).
\end{equation*}
Since we have already shown $\widehat{G}(t,x,\cdot\,)\in C^n(\mathbb{R},r^\alpha)$ and since $\widetilde{G}_x(t,x,\cdot\,)\in C^n(\mathbb{R},r^{\alpha+1})$ by assumption \eqref{Eq_Gx_estimate}, it follows from Lemma~\ref{lem_ymf_expf} that $\widehat{G}_x(t,x,\cdot\,)\in C^n(\mathbb{R},r^{\alpha+1})$, with 
\begin{align*}
\Vert\widehat{G}_x(t,&x,\cdot\,)\Vert_{C^n(\mathbb{R},r^{\alpha+1})} \\
&\leq 2a(t)\Vert(x-y)\widehat{G}(t,x,\cdot\,)\Vert_{C^n(\mathbb{R},r^{\alpha+1})}+\Vert e^{-2ia(t)xy}\widetilde{G}_x(t,x,\cdot\,)\Vert_{C^n(\mathbb{R},r^{\alpha+1})} \\
&\leq 2a(t)(|x|+2^n)\Vert\widehat{G}(t,x,\cdot\,)\Vert_{C^n(\mathbb{R},r^\alpha)}+(1+2a(t)|x|)^n\Vert\widetilde{G}_x(t,x,\cdot\,)\Vert_{C^n(\mathbb{R},r^{\alpha+1})}.
\end{align*}
In particular, this norm is locally bounded on $(0,T)\times\mathbb{R}$. For the proof of \eqref{Eq_Ghatxx_Ghatt_estimate} we compute the $t$- and the second $x$-derivative
\begin{align*}
\widehat{G}_t(t,x,y)&=ia'(t)(x^2-2xy)\widehat{G}(t,x,y)+e^{ia(t)(x^2-2xy)}\widetilde{G}_t(t,x,y), \\
\widehat{G}_{xx}(t,x,y)&=2a(t)\big(2a(t)(x-y)^2+i\big)\widehat{G}(t,x,y)+4ia(t)(x-y)\widehat{G}_x(t,x,y) \\
&\quad+e^{ia(t)(x^2-2xy)}\widetilde{G}_{xx}(t,x,y).
\end{align*}
Similarly, also these functions satisfy $\widehat{G}_t(t,x,\cdot\,),\widehat{G}_{xx}(t,x,\cdot\,)\in C^n(\mathbb{R},r^{\alpha+2})$ by Lemma~\ref{lem_ymf_expf}, with norms which are locally bounded on $(0,T)\times\mathbb{R}$. Hence we have proven all three properties \eqref{Eq_Ghat_estimates}. \medskip

For the main part of the proof we first fix some arbitrary $b>0$, and note that it is sufficient to verify the assertions for $x\in(-b,b)$ only. Let us split up for every $\varepsilon>0$ the integral in \eqref{Eq_Psi} into
\begin{subequations}
\begin{align}
\int_\mathbb{R}e^{-\varepsilon y^2}G(t,x,y)F(y)dy&=\int_b^\infty e^{-\varepsilon y^2}G(t,x,y)F(y)dy \label{Eq_Schroedinger_solution_13} \\
&\quad+\int_{-b}^be^{-\varepsilon y^2}G(t,x,y)F(y)dy \label{Eq_Schroedinger_solution_14} \\
&\quad+\int_{-\infty}^{-b}e^{-\varepsilon y^2}G(t,x,y)F(y)dy. \label{Eq_Schroedinger_solution_15}
\end{align}
\end{subequations}
The proof will now be done in four steps. In first three steps we prove that the limits $\varepsilon\to 0^+$ in \eqref{Eq_Schroedinger_solution_13}, \eqref{Eq_Schroedinger_solution_14} and \eqref{Eq_Schroedinger_solution_15} exist and are solutions of the Schrödinger equation in \eqref{Eq_Schroedinger_G}. In Step 4 we will verify the initial condition in \eqref{Eq_Schroedinger_G}. \medskip

\noindent {\it Step 1.} In the first step we will use Theorem~\ref{thm_Integral_derivative} to show that the limit
\begin{equation}\label{Eq_Schroedinger_solution_1}
\Psi_1(t,x):=\lim_{\varepsilon\to 0^+}\int_b^\infty e^{-\varepsilon y^2}G(t,x,y)F(y)dy,\qquad t\in(0,T),\,x\in\mathbb{R},
\end{equation}
exists, and that we are allowed to carry the $t$- and $x$-derivatives inside the integral. Plugging in the function $\widehat{G}$ from \eqref{Eq_Ghat}, we rewrite
\begin{equation}\label{Eq_Schroedinger_solution_11}
\int_b^\infty e^{-\varepsilon y^2}G(t,x,y)F(y)dy=\int_b^\infty e^{(ia(t)-\varepsilon)y^2}\widehat{G}(t,x,y)F(y)dy=I_{a(t),b}^{\varepsilon,0}(f(t,x,\cdot\,)),
\end{equation}
using the function
\begin{equation}\label{Eq_Schroedinger_solution_3}
f(t,x,y):=\widehat{G}(t,x,y)F(y).
\end{equation}
Let us now fix $x\in(-b,b)$ and verify that the function $f(\,\cdot\,,x,\cdot\,)$ satisfies conditions (i) and (ii) in Theorem~\ref{thm_Integral_derivative}. In fact, for (i) recall that $G(\,\cdot\,,\cdot\,,y)\in\AC_{1,2}((0,T)\times\mathbb{R})$ by Assumption~\ref{ass_Greensfunction}~(i), so that $\widehat{G}(\,\cdot\,,x,y)\in\AC((0,T))$ and hence also $f(\,\cdot\,,x,y)\in\AC((0,T))$. For condition (ii) in Theorem~\ref{thm_Integral_derivative}, note that $\widehat{G}(t,x,\cdot\,)\in C^n(\mathbb{R},r^\alpha)$ and $F\in C^n(\mathbb{R},r^\beta)$. It then follows from Proposition~\ref{prop_Cn_product} and Lemma~\ref{lem_Cnr_Cnalpha} that $f(t,x,\cdot\,)\in C^n_{\alpha+\beta}([b,\infty))$, with norm bounded by
\begin{align}
\Vert f(t,x,\cdot\,)\Vert_{C^n_{\alpha+\beta}([b,\infty))}&\leq\Big(1+\frac{1}{b}\Big)^{n+\alpha+\beta}\Vert f(t,x,\cdot\,)\Vert_{C^n(\mathbb{R},r^{\alpha+\beta})} \notag \\
&\leq\Big(1+\frac{1}{b}\Big)^{n+\alpha+\beta}2^n\Vert\widehat{G}(t,x,\cdot\,)\Vert_{C^n(\mathbb{R},r^\alpha)}\Vert F\Vert_{C^n(\mathbb{R},r^\beta)}. \label{Eq_Schroedinger_solution_4}
\end{align}
Since $\Vert\widehat{G}(t,x,\cdot\,)\Vert_{C^n(\mathbb{R},r^\alpha)}$ is locally bounded on $(0,T)\times\mathbb{R}$, the same is true for $\Vert f(t,x,\cdot\,)\Vert_{C_{\alpha+\beta}^n([b,\infty))}$. Similar estimates as in \eqref{Eq_Schroedinger_solution_4} show that also
\begin{equation*}
\frac{\partial f}{\partial t}(t,x,\cdot\,)=\widehat{G}_t(t,x,\cdot\,)F(\,\cdot\,)\in C_{\alpha+\beta+2}^n([b,\infty)),
\end{equation*}
with locally bounded norm $\Vert\frac{\partial f}{\partial t}(t,x,\cdot\,)\Vert_{C_{\alpha+\beta+2}^n([b,\infty))}$. Since $n>\alpha+\beta+3$, the assumptions of Theorem~\ref{thm_Integral_derivative} are satisfied, and it follows that the limit \eqref{Eq_Schroedinger_solution_1} exists, namely
\begin{align}
\mathcal{I}_{a(t),b}(f(t,x,\cdot\,))&=\lim\limits_{\varepsilon\to 0^+}I_{a(t),b}^{\varepsilon,0}(f(t,x,\cdot\,)) \notag \\
&=\lim\limits_{\varepsilon\to 0^+}\int_b^\infty e^{-\varepsilon y^2}G(t,x,y)F(y)dy \label{Eq_Schroedinger_solution_6} \\
&=\Psi_1(t,x), \notag
\end{align}
and leads to a function $\Psi_1(\,\cdot\,,x)\in\AC((0,T))$ with derivative 
\begin{equation}\label{Eq_Schroedinger_solution_8}
\frac{\partial}{\partial t}\Psi_1(t,x)
=\lim\limits_{\varepsilon\to 0^+}\int_b^\infty e^{-\varepsilon y^2}G_t(t,x,y)F(y)dy.
\end{equation}
In order to show that $\Psi_1$ is differentiable with respect to $x$, we will again verify conditions (i) and (ii) in Theorem~\ref{thm_Integral_derivative}, this time for $f(t,\cdot\,,\cdot\,)$ in \eqref{Eq_Schroedinger_solution_3} with $t\in(0,T)$ fixed. Condition~(i) in this context reads as $f(t,\cdot\,,y)\in\AC(\mathbb{R})$, and holds since $G(\,\cdot\,,\cdot\,,y)\in\AC_{1,2}((0,T)\times\mathbb{R})$ and consequently $\widehat{G}(t,\cdot\,,y)\in\AC(\mathbb{R})$. We have already shown that $f(t,x,\cdot\,)\in C_{\alpha+\beta}^n([b,\infty))$, and the corresponding norm bound \eqref{Eq_Schroedinger_solution_4} is also locally integrable in $x$. Similar estimates as in \eqref{Eq_Schroedinger_solution_4} show that the $x$-derivative satisfies
\begin{equation*}
\frac{\partial f}{\partial x}(t,x,\cdot\,)=\widehat{G}_x(t,x,\cdot\,)F(\,\cdot\,)\in C_{\alpha+\beta+1}^n([b,\infty)),
\end{equation*}
with locally bounded norm $\Vert\frac{\partial f}{\partial x}(t,x,\cdot\,)\Vert_{C_{\alpha+\beta+1}^n([b,\infty))}$. Hence it follows from Theorem~\ref{thm_Integral_derivative} that $\Psi_1(t,\cdot\,)\in \AC(\mathbb R)$, with $x$-derivative given by
\begin{equation}\label{Eq_Schroedinger_solution_7}
\frac{\partial}{\partial x}\Psi_1(t,x)
=\lim\limits_{\varepsilon\to 0^+}\int_b^\infty e^{-\varepsilon y^2}G_x(t,x,y)F(y)dy.
\end{equation}
For the second $x$-derivative, we have to differentiate \eqref{Eq_Schroedinger_solution_7} once more and in a similar way as above we verify $\frac{\partial\Psi_1}{\partial x}(t,\cdot\,)\in\AC(\mathbb{R})$ and obtain
\begin{equation}\label{Eq_Schroedinger_solution_10}
\frac{\partial^2}{\partial x^2}\Psi_1(t,x)=\lim\limits_{\varepsilon\to 0^+}\int_b^\infty e^{-\varepsilon y^2}G_{xx}(t,x,y)F(y)dy.
\end{equation}
Combining now \eqref{Eq_Schroedinger_solution_8} and \eqref{Eq_Schroedinger_solution_10}, and the fact that $G$ is a solution of the Schrödinger equation \eqref{Eq_Schroedinger_G}, shows $\Psi_1\in\AC_{1,2}((0,T)\times\mathbb{R})$ and
\begin{equation*}
i\frac{\partial}{\partial t}\Psi_1(t,x)=\Big(-\frac{\partial^2}{\partial x^2}+V(t,x)\Big)\Psi_1(t,x)\qquad{\rm f.a.e.}\;t\in(0,T),\,x\in\mathbb{R}.
\end{equation*}
{\it Step 2.} In the second step we consider the integral \eqref{Eq_Schroedinger_solution_15}. If we substitute $y\to -y$, we conclude in the same way as in Step 1 that
\begin{equation}\label{Eq_Schroedinger_solution_9}
\Psi_{-1}(t,x):=\lim\limits_{\varepsilon\to 0^+}\int_{-\infty}^{-b}e^{-\varepsilon y^2}G(t,x,y)F(y)dy=\lim\limits_{\varepsilon\to 0^+}\int_b^\infty e^{-\varepsilon y^2}G(t,x,-y)F(-y)dy
\end{equation}
exists, and that $\Psi_{-1}\in\AC_{1,2}((0,T)\times\mathbb{R})$ satisfies
\begin{equation*}
i\frac{\partial}{\partial t}\Psi_{-1}(t,x)=\Big(-\frac{\partial^2}{\partial x^2}+V(t,x)\Big)\Psi_{-1}(t,x)\qquad{\rm f.a.e.}\;t\in(0,T),\,x\in\mathbb{R}.
\end{equation*}
{\it Step 3.} For the integral \eqref{Eq_Schroedinger_solution_14}, we note that the integrand is continuous and the integration interval $[-b,b]$ is compact. Hence the limit
\begin{equation}\label{Eq_Schroedinger_solution_2}
\Psi_0(t,x):=\lim\limits_{\varepsilon\to 0^+}\int_{-b}^be^{-\varepsilon y^2}G(t,x,y)F(y)dy=\int_{-b}^bG(t,x,y)F(y)dy
\end{equation}
exists. Next, the $t$-derivative of the Green's function, in the decomposition \eqref{Eq_Ghat}, can be estimated using Lemma~\ref{lem_ymf_expf}~(i) by
\begin{align*}
\Vert G_t(t,x,\cdot\,)\Vert_{C^n(\mathbb{R},r^{\alpha+2})}&=\big\Vert ia'(t)y^2\widehat{G}(t,x,\cdot\,)+\widehat{G}_t(t,x,\cdot\,)\big\Vert_{C^n(\mathbb{R},r^{\alpha+2})} \\
&\leq 3^n|a'(t)|\Vert\widehat{G}(t,x,\cdot\,)\Vert_{C^n(\mathbb{R},r^\alpha)}+\Vert\widehat{G}_t(t,x,\cdot\,)\Vert_{C^n(\mathbb{R},r^{\alpha+2})}.
\end{align*}
Since $a'\in L_\loc^1((0,T))$ because of $a\in\AC((0,T))$, and since both norms $\Vert\widehat{G}(t,x,\cdot\,)\Vert_{C^n(\mathbb{R},r^\alpha)}$ and $\Vert\widehat{G}_t(t,x,\cdot\,)\Vert_{C^n(\mathbb{R},r^{\alpha+2})}$ are locally bounded on $(0,T)\times\mathbb{R}$ by \eqref{Eq_G_estimate} and \eqref{Eq_Gxx_Gt_estimate}, the right hand side of this inequality is integrable over $[t_0,t_1]\times[-b,b]$ for every compact interval $[t_0,t_1]\subseteq(0,T)$. Interchanging the order of integration then gives
\begin{align*}
\int_{t_0}^{t_1}\bigg(\int_{-b}^bG_t(t,x,y)F(y)dy\bigg)dt&=\int_{-b}^b\bigg(\int_{t_0}^{t_1}G_t(t,x,y)dt\bigg)F(y)dy \\
&=\int_{-b}^bG(t,x,y)F(y)dy\Big|_{t=t_0}^{t_1}\\
&=\Psi_0(t,x)\big|_{t=t_0}^{t_1}.
\end{align*}
This proves that $\Psi_0(\,\cdot\,,x)\in\AC((0,T))$ with $t$-derivative
\begin{equation}\label{Eq_Schroedinger_solution_16}
\frac{\partial}{\partial t}\Psi_0(t,x)=\int_{-b}^bG_t(t,x,y)F(y)dy.
\end{equation}
In the same way one also verifies $\Psi_0(t,\cdot\,),\frac{\partial\Psi_0}{\partial x}(t,\cdot\,)\in\AC(\mathbb{R})$ with second $x$-derivative
\begin{equation}\label{Eq_Schroedinger_solution_12}
\frac{\partial^2}{\partial x^2}\Psi_0(t,x)=\int_{-b}^bG_{xx}(t,x,y)F(y)dy.
\end{equation}
Combining now \eqref{Eq_Schroedinger_solution_16} and \eqref{Eq_Schroedinger_solution_12} shows $\Psi_0\in\AC_{1,2}((0,T)\times\mathbb{R})$ and
\begin{equation*}
i\frac{\partial}{\partial t}\Psi_0(t,x)=\Big(-\frac{\partial^2}{\partial x^2}+V(t,x)\Big)\Psi_0(t,x)\qquad{\rm f.a.e.}\;t\in(0,T),\,x\in\mathbb{R}.
\end{equation*}
From the considerations in Step 1, 2, and 3 above, it now follows that the limit
\begin{equation}\label{Eq_Schroedinger_solution_22}
\Psi(t,x)=\lim\limits_{\varepsilon\to 0^+}\int_\mathbb{R}e^{-\varepsilon y^2}G(t,x,y)F(y)dy=\Psi_1(t,x)+\Psi_0(t,x)+\Psi_{-1}(t,x)
\end{equation}
in \eqref{Eq_Psi} exists, and $\Psi\in\AC_{1,2}((0,T)\times\mathbb{R})$ is a solution of the Schrödinger equation in \eqref{Eq_Schroedinger_G}. \medskip

\noindent {\it Step 4.} Now the initial condition in \eqref{Eq_Schroedinger_G} will be verified, again for the three functions $\Psi_1$, $\Psi_0$ and $\Psi_{-1}$ separately. We start with $\Psi_1$ in \eqref{Eq_Schroedinger_solution_6} and note that by Theorem~\ref{thm_Integral} with $y_0=x$ and \eqref{Eq_Ghat} the oscillatory integral can also be written in the form
\begin{align}
\Psi_1(t,x)&=\lim\limits_{\varepsilon\to 0^+}\int_b^\infty e^{(ia(t)-\varepsilon)(y-x)^2}\widetilde{G}(t,x,y)F(y)dy \notag \\
&=\lim\limits_{\varepsilon\to 0^+}\int_{b-x}^\infty e^{(ia(t)-\varepsilon)y^2}\widetilde{G}(t,x,y+x)F(y+x)dy \label{Eq_Schroedinger_solution_17} \\
&=\mathcal{I}_{a(t),b-x}(g(t,x,\cdot\,)), \notag
\end{align}
using the function
\begin{equation*}
g(t,x,y):=\widetilde{G}(t,x,y+x)F(y+x).
\end{equation*}
Note that $g(t,x,\cdot\,)\in C_{\alpha+\beta}^n([b-x,\infty))$ (this is clear from the norm estimate below) and since we have assumed $x\in(-b,b)$ the lower integration bound $b-x>0$ in $\mathcal{I}_{a(t),b-x}$ is positive. From $\widetilde{G}(t,x,\cdot\,)\in C^n(\mathbb{R},r^\alpha)$ and $F\in C^n(\mathbb{R},r^\beta)$ we conclude together with Lemma~\ref{lem_Cnr_Cnalpha}, Proposition~\ref{prop_Cn_product}, and Lemma~\ref{lem_Cn_shifted} that 
\begin{align*}
\Vert g(t,x,\cdot\,)\Vert_{C^n_{\alpha+\beta}([b-x,\infty))}&\leq\Big(1+\frac{1}{b-x}\Big)^{n+\alpha+\beta}\Vert g(t,x,\cdot\,)\Vert_{C^n(\mathbb{R},r^{\alpha+\beta})} \\
&\leq\Big(1+\frac{1}{b-x}\Big)^{n+\alpha+\beta}2^n\Vert\widetilde{G}(t,x,x+\,\cdot\,)\Vert_{C^n(\mathbb{R},r^\alpha)}\Vert F(x+\,\cdot\,)\Vert_{C^n(\mathbb{R},r^\beta)} \\
&\leq\Big(1+\frac{1}{b-x}\Big)^{n+\alpha+\beta}2^n(1+|x|)^{\alpha+\beta}\Vert\widetilde{G}(t,x,\cdot\,)\Vert_{C^n(\mathbb{R},r^\alpha)}\Vert F\Vert_{C^n(\mathbb{R},r^\beta)}.
\end{align*}
With \eqref{Eq_D} we can now estimate the function $\Psi_1$ in \eqref{Eq_Schroedinger_solution_17} by
\begin{equation*}
|\Psi_1(t,x)|\leq D_{a(t),b-x,n,\alpha+\beta}\Big(1+\frac{1}{b-x}\Big)^{n+\alpha+\beta}2^n(1+|x|)^{\alpha+\beta}\Vert\widetilde{G}(t,x,\cdot\,)\Vert_{C^n(\mathbb{R},r^\alpha)}\Vert F\Vert_{C^n(\mathbb{R},r^\beta)}.
\end{equation*}
Since $\lim_{t\to\infty}a(t)=\infty$ we have $\limsup_{t\to\infty}a(t)D_{a(t),b-x,n,\alpha,\beta}<\infty$ by \eqref{Eq_D_asymptotics}. Together with the assumption \eqref{Eq_Gtilde_initial} we then conclude 
\begin{equation*}
\lim\limits_{t\to 0^+}\Psi_1(t,x)=0.
\end{equation*}
For the same reason one also has
\begin{equation*}
\lim_{t\to 0^+}\Psi_{-1}(t,x)=0.
\end{equation*}
Eventually, it follows from \eqref{Eq_Initial_G} that
\begin{equation*}
\lim\limits_{t\to 0^+}\Psi_0(t,x)=\lim\limits_{t\to 0^+}\int_{-b}^bG(t,x,y)F(y)dy=F(x),
\end{equation*}
and hence the initial condition in \eqref{Eq_Schroedinger_G} is clear from the decomposition \eqref{Eq_Schroedinger_solution_22}.
\end{proof}

The next result complements Theorem~\ref{thm_Psi} and shows that the solution of the time dependent Schrödinger equation depends continuously on the initial condition. In order to emphasize the initial condition we will denote the solution of \eqref{Eq_Schroedinger_G} by $\Psi(\,\cdot\,,\cdot\,;F)$.

\begin{thm}\label{thm_Psi_continuous_dependency}
Let $T\in(0,\infty]$ and $G:(0,T)\times\mathbb{R}\times\mathbb{R}\to\mathbb{C}$ be as in Assumption~\ref{ass_Greensfunction}, where \eqref{Eq_G_estimates} holds for some $\alpha\geq 0$, $n\in\mathbb{N}$ with $n>\alpha+3$. Assume that $F,(F_m)_m\in C^n(\mathbb{R},r^\beta)$, $0\leq\beta<n-\alpha-3$, are such that
\begin{equation*}
\lim\limits_{m\to\infty}\Vert F-F_m\Vert_{C^n(\mathbb{R},r^\beta)}=0.
\end{equation*}
Then also the corresponding solutions of \eqref{Eq_Schroedinger_G} satisfy
\begin{equation*}
\lim\limits_{m\to\infty}\Psi(t,x;F_m)=\Psi(t,x;F),
\end{equation*}
where the convergence is locally uniform on $(0,T)\times\mathbb{R}$.
\end{thm}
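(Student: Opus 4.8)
The plan is to exploit the linearity of the solution map $F\mapsto\Psi(\,\cdot\,,\cdot\,;F)$ together with the quantitative bound \eqref{Eq_D} from Theorem~\ref{thm_Integral}, which is precisely the estimate underlying the abstract continuity result Proposition~\ref{contiprop}. First I would note that the regularized integral in \eqref{Eq_Psi} is linear in $F$ and, since the limit $\varepsilon\to 0^+$ exists separately for $F$ and for $F_m$ by Theorem~\ref{thm_Psi}, the limit of the difference equals the difference of the limits. Hence $\Psi(t,x;F_m)-\Psi(t,x;F)=\Psi(t,x;F_m-F)$, and it suffices to show that $\Psi(t,x;F-F_m)\to 0$ locally uniformly on $(0,T)\times\mathbb{R}$. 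In other words, I may simply rerun the estimates from the proof of Theorem~\ref{thm_Psi} with the initial condition $F-F_m$ in place of $F$.

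Next I would fix an arbitrary compact set $K\subseteq(0,T)\times\mathbb{R}$ and choose $b>0$ so large that $x\in(-b,b)$ for all $(t,x)\in K$. Keeping this single $b$ fixed for the whole set $K$, I split $\Psi(t,x;F-F_m)=\Psi_1+\Psi_0+\Psi_{-1}$ according to the integration regions $[b,\infty)$, $[-b,b]$ and $(-\infty,-b]$, exactly as in the proof of Theorem~\ref{thm_Psi}, and estimate each contribution. For the oscillatory piece, writing $\Psi_1$ as the oscillatory integral $\mathcal{I}_{a(t),b}(\widehat{G}(t,x,\cdot\,)(F-F_m))$ as in \eqref{Eq_Schroedinger_solution_11} with the function $\widehat{G}$ from the proof of Theorem~\ref{thm_Psi}, the bound \eqref{Eq_D} together with the product estimate \eqref{Eq_Schroedinger_solution_4} applied to $F-F_m$ gives
\begin{equation*}
|\Psi_1(t,x;F-F_m)|\leq D_{a(t),b,n,\alpha+\beta}\Big(1+\tfrac{1}{b}\Big)^{n+\alpha+\beta}2^n\Vert\widehat{G}(t,x,\cdot\,)\Vert_{C^n(\mathbb{R},r^\alpha)}\Vert F-F_m\Vert_{C^n(\mathbb{R},r^\beta)},
\end{equation*}
and the piece $\Psi_{-1}$ is handled identically after the substitution $y\to-y$. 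For the central piece $\Psi_0(t,x;F-F_m)=\int_{-b}^b G(t,x,y)(F(y)-F_m(y))\,dy$ I would use the pointwise bound $|F-F_m|\leq(1+b)^\beta\Vert F-F_m\Vert_{C^n(\mathbb{R},r^\beta)}$ on $[-b,b]$ to obtain the estimate $\big(\int_{-b}^b|G(t,x,y)|\,dy\big)(1+b)^\beta\Vert F-F_m\Vert_{C^n(\mathbb{R},r^\beta)}$.

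It then remains to see that all prefactors are bounded uniformly over $K$. The quantity $D_{a(t),b,n,\alpha+\beta}$ is continuous in $t$ by Theorem~\ref{thm_Integral} (since $a\in\AC((0,T))$ is continuous), the norm $\Vert\widehat{G}(t,x,\cdot\,)\Vert_{C^n(\mathbb{R},r^\alpha)}$ is locally bounded on $(0,T)\times\mathbb{R}$ as established at the beginning of the proof of Theorem~\ref{thm_Psi}, and $\int_{-b}^b|G(t,x,y)|\,dy$ is continuous in $(t,x)$ because $G$ is continuous. Taking the supremum of these prefactors over $K$ produces a finite constant $C_K$ (depending only on $K$ and on $n,\alpha,\beta,b$), so that $\sup_{(t,x)\in K}|\Psi(t,x;F-F_m)|\leq C_K\Vert F-F_m\Vert_{C^n(\mathbb{R},r^\beta)}\to 0$ as $m\to\infty$. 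The admissibility of the oscillatory integral throughout is guaranteed by $n>\alpha+\beta+1$, which follows from the hypothesis $\beta<n-\alpha-3$.

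The main obstacle I anticipate is not any individual estimate but upgrading pointwise control to uniform control over $K$: Proposition~\ref{contiprop} alone yields convergence of $\mathcal{I}_{a,b}(f_m)$ only for fixed $a,b$, whereas here $a=a(t)$ and the Green's function data vary with $(t,x)$. It is therefore essential to use the continuous dependence of $D_{a,b,n,\alpha+\beta}$ on $a,b$ from Theorem~\ref{thm_Integral} and the local boundedness of the Green's function norms from Assumption~\ref{ass_Greensfunction}, and to fix a single splitting radius $b$ valid for the entire $x$-projection of $K$, rather than the $x$-dependent radius $b-x$ used in Step~4 of the proof of Theorem~\ref{thm_Psi}.
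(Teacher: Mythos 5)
Your proposal is correct and follows essentially the same route as the paper's own proof: the same splitting $\Psi=\Psi_1+\Psi_0+\Psi_{-1}$ with a fixed radius $b$, the same combination of the bound \eqref{Eq_D} with the product estimate \eqref{Eq_Schroedinger_solution_4} applied to $\widehat{G}(t,x,\cdot\,)(F-F_m)$, and the same uniformity argument via continuity of $D_{a(t),b,n,\alpha+\beta}$ and local boundedness of the Green's function norms. The only cosmetic difference is your treatment of $\Psi_0$ (bounding $\int_{-b}^b|G|\,dy$ directly instead of using $\Vert\widetilde{G}(t,x,\cdot\,)\Vert_{C^n(\mathbb{R},r^\alpha)}$ and $\int_{-b}^b r(y)^{\alpha+\beta}dy$; note that the finiteness of the supremum over $K$ is best justified by that norm bound rather than by joint continuity of $G$, which is not explicitly assumed).
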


\begin{proof}
Using the decomposition \eqref{Eq_Schroedinger_solution_22}, it suffices to prove the convergence for the three functions $\Psi_1$, $\Psi_0$ and $\Psi_{-1}$ in \eqref{Eq_Schroedinger_solution_1}, \eqref{Eq_Schroedinger_solution_2} and \eqref{Eq_Schroedinger_solution_9}, respectively. For $\Psi_1$ we write similarly as in  \eqref{Eq_Schroedinger_solution_11}
\begin{align*}
\Psi_1(t,x;F_m)-\Psi_1(t,x;F)&=\lim\limits_{\varepsilon\to 0^+}\int_b^\infty e^{(ia(t)-\varepsilon)y^2}\widehat{G}(t,x,y)(F_m(y)-F(y))dy \\
&=\lim\limits_{\varepsilon\to 0^+} I_{a(t),b}^{\varepsilon,0}(f_m(t,x,\cdot\,)),
\end{align*}
where we have set $f_m(t,x,y):=\widehat{G}(t,x,y)(F_m(y)-F(y))$. Following the same reasoning as in \eqref{Eq_Schroedinger_solution_4}, we obtain the norm bound
\begin{equation*}
\Vert f_m(t,x,\cdot\,)\Vert_{C_{\alpha+\beta}^n([b,\infty))}\leq\Big(1+\frac{1}{b}\Big)^{n+\alpha+\beta}2^n\Vert\widehat{G}(t,x,\cdot\,)\Vert_{C^n(\mathbb{R},r^\alpha)}\Vert F_m-F\Vert_{C^n(\mathbb{R},r^\beta)}.
\end{equation*}
Using the bound \eqref{Eq_D} for the oscillatory integral, we conclude
\begin{align*}
|\Psi_1(t,x;F_m)&-\Psi_1(t,x;F)| \\
&\leq D_{a(t),b,n,\alpha+\beta}\Vert f_m(t,x,\cdot\,)\Vert_{C^n_{\alpha+\beta}([b,\infty))} \\
&\leq D_{a(t),b,n,\alpha+\beta}\Big(1+\frac{1}{b}\Big)^{n+\alpha+\beta}2^n\Vert\widehat{G}(t,x,\cdot\,)\Vert_{C^n(\mathbb{R},r^\alpha)}\Vert F_m-F\Vert_{C^n(\mathbb{R},r^\beta)}\to 0
\end{align*}
as $m\to\infty$. Since the coefficient $D_{a(t),b,n,\alpha+\beta}$ is continuous in $t$ and $\Vert\widehat{G}(t,x,\cdot\,)\Vert_{C^n(\mathbb{R},r^\alpha)}$ is locally bounded on $(0,T)\times\mathbb{R}$ by Assumption~\ref{ass_Greensfunction}~(iii), this limit is locally uniform on $(0,T)\times\mathbb{R}$. The same reasoning shows that also $\Psi_{-1}(\,\cdot\,,\cdot\,;F_m)$ converges locally uniformly on $(0,T)\times\mathbb{R}$ to $\Psi_{-1}(\,\cdot\,,\cdot\,;F)$. For $\Psi_0$ in \eqref{Eq_Schroedinger_solution_2} we again use \eqref{Eq_G_decomposition}, and by the definition of the norm \eqref{Eq_Cnr_norm}, we estimate
\begin{align*}
\big|\Psi_0(t,x;F)-\Psi_0(t,x;F_m)\big|&=\bigg|\int_{-b}^be^{ia(t)(y-x)^2}\widetilde{G}(t,x,y)(F(y)-F_m(y))dy\bigg| \\
&\leq\Vert\widetilde{G}(t,x,\cdot\,)\Vert_{C^n(\mathbb{R},r^\alpha)}\Vert F-F_m\Vert_{C^n(\mathbb{R},r^\beta)}\int_{-b}^br(y)^{\alpha+\beta}dy\to 0
\end{align*}
as $m\to\infty$. Since $\Vert\widetilde{G}(t,x,\cdot\,)\Vert_{C^n(\mathbb{R},r^\alpha)}$ is locally bounded, this convergence is also locally uniform on $(0,T)\times\mathbb{R}$.
\end{proof}

\section{An example: The free particle}\label{sec_The_free_particle}

In this section we illustrate and apply Theorem~\ref{thm_Psi} to the special case of the free particle, i.e. the potential $V(t,x)=0$ in the Schrödinger equation \eqref{Eq_Schroedinger_G}. The wave function $\Psi(t,x)$, defined via the oscillatory integral \eqref{Eq_Psi}, will be computed for the plane wave initial condition $F(y)=e^{i\kappa y}$ in Example~\ref{exam_Plane_wave_initial_conditions} and for monomials $F(y)=y^m$ in Example~\ref{exam_Moments}. We refer to \cite{ABCS22,ACSST13,ACSST17,ACSS19,BCSS14,CSSY22} for many other explicit examples of Green's functions and related considerations in the context of supershifts and superoscillations.

\begin{cor}
For every initial condition $F\in C^n(\mathbb{R},r^\beta)$, with $n\geq 4$, $0\leq\beta<n-3$, the limit
\begin{equation}\label{Eq_Psi_free}
\Psi(t,x):=\frac{1}{2\sqrt{i\pi t}}\lim\limits_{\varepsilon\to 0^+}\int_\mathbb{R}e^{-\varepsilon y^2}e^{\frac{i(y-x)^2}{4t}}F(y)dy,\qquad t\in(0,\infty),\,x\in\mathbb{R},
\end{equation}
exists and defines a solution $\Psi\in\AC_{1,2}((0,T)\times\mathbb{R})$ of the time dependent Schrödinger equation
\begin{align*}
i\frac{\partial}{\partial t}\Psi(t,x)&=-\frac{\partial^2}{\partial x^2}\Psi(t,x), && {\rm f.a.e.}\;t\in(0,\infty),\,x\in\mathbb{R}, \\
\lim\limits_{t\to 0^+}\Psi(t,x)&=F(x), && x\in\mathbb{R}.
\end{align*}
\end{cor}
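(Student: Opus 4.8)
The plan is to recognise \eqref{Eq_Psi_free} as the instance of Theorem~\ref{thm_Psi} associated with the free particle Green's function
$$G(t,x,y)=\frac{1}{2\sqrt{i\pi t}}\,e^{\frac{i(y-x)^2}{4t}},$$
so that the entire corollary reduces to checking that this $G$ fulfils Assumption~\ref{ass_Greensfunction} and then invoking Theorem~\ref{thm_Psi}. The natural choice of parameters is $a(t)=\frac{1}{4t}$, $\widetilde{G}(t,x,y)=\frac{1}{2\sqrt{i\pi t}}$, and $\alpha=0$; with this the decomposition \eqref{Eq_G_decomposition} holds by inspection, $a\in\AC((0,\infty))$ is positive with $\lim_{t\to 0^+}a(t)=\infty$, and the hypotheses $n>\alpha+3$, $0\le\beta<n-\alpha-3$ of Theorem~\ref{thm_Psi} reduce to exactly $n\ge 4$, $0\le\beta<n-3$, matching the statement. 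Once Assumption~\ref{ass_Greensfunction} is verified, the existence of the limit \eqref{Eq_Psi_free}, the membership $\Psi\in\AC_{1,2}((0,\infty)\times\mathbb{R})$, and the fact that $\Psi$ solves the free equation (with $V\equiv 0$) all follow at once from Theorem~\ref{thm_Psi}.

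First I would dispatch conditions (i) and (iii), both of which are essentially immediate. For (i), a direct computation of $G_t$ and $G_{xx}$ gives $iG_t=-G_{xx}$, and the smoothness of $G$ in $(t,x)$ on $(0,\infty)\times\mathbb{R}$ yields $G(\,\cdot\,,\cdot\,,y)\in\AC_{1,2}$. For (iii) the decisive simplification is that $\widetilde{G}(t,x,y)=\frac{1}{2\sqrt{i\pi t}}$ is constant in both $x$ and $y$, so that $\widetilde{G}_x=\widetilde{G}_{xx}=0$ and $\widetilde{G}_t=-\frac{1}{4\sqrt{i\pi}}\,t^{-3/2}$; each of these lies trivially in the required space $C^n(\mathbb{R},r^\alpha)$ because all $y$-derivatives vanish, and the relevant norms reduce to $\frac{1}{2\sqrt{\pi t}}$ and $\frac{1}{4\sqrt{\pi}}\,t^{-3/2}$, which are locally bounded on $(0,\infty)\times\mathbb{R}$. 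The one genuine decay requirement \eqref{Eq_Gtilde_initial} then reads
$$\lim_{t\to 0^+}\frac{\Vert\widetilde{G}(t,x,\cdot\,)\Vert_{C^n(\mathbb{R},r^0)}}{a(t)}=\lim_{t\to 0^+}\frac{1/(2\sqrt{\pi t})}{1/(4t)}=\lim_{t\to 0^+}\frac{2\sqrt{t}}{\sqrt{\pi}}=0,$$
which holds.

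The main obstacle is condition (ii), the approximate identity property, which is the only place where the oscillatory structure of $G$ must genuinely be exploited rather than read off. The plan is to verify, for $\varphi\in C^1([-x_0,x_0])$ and $x\in(-x_0,x_0)$, that
$$\lim_{t\to 0^+}\int_{-x_0}^{x_0}\frac{1}{2\sqrt{i\pi t}}\,e^{\frac{i(x-y)^2}{4t}}\varphi(y)\,dy=\varphi(x)$$
via the substitution $y=x+2\sqrt{t}\,u$, which turns the exponent into $e^{iu^2}$, absorbs the prefactor through $dy=2\sqrt{t}\,du$, and produces $\frac{1}{\sqrt{i\pi}}\int e^{iu^2}\varphi(x+2\sqrt{t}\,u)\,du$ over an interval expanding to all of $\mathbb{R}$ as $t\to 0^+$. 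Splitting $\varphi(x+2\sqrt{t}\,u)=\varphi(x)+(\varphi(x+2\sqrt{t}\,u)-\varphi(x))$ and using the normalisation $\frac{1}{\sqrt{i\pi}}\int_{-\infty}^\infty e^{iu^2}\,du=1$, the leading term yields $\varphi(x)$ while the remainder is controlled by the $C^1$-modulus of $\varphi$. The delicate point is that the Fresnel integral is only conditionally convergent, so the remainder estimate must be carried out on regularised integrals, or by an integration by parts against $e^{iu^2}$ in the spirit of Section~\ref{sec_Oscillatory_integrals}, before the limit $t\to 0^+$ can be taken; this is where the bulk of the careful work lies.
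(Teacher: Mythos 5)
Your proposal is correct and, in its overall architecture, coincides with the paper's proof: both reduce the corollary to Theorem~\ref{thm_Psi} by verifying Assumption~\ref{ass_Greensfunction} for $G(t,x,y)=\frac{1}{2\sqrt{i\pi t}}e^{\frac{i(y-x)^2}{4t}}$ with $a(t)=\frac{1}{4t}$, $\widetilde{G}(t,x,y)=\frac{1}{2\sqrt{i\pi t}}$, $\alpha=0$, and your treatment of conditions (i) and (iii) is essentially identical to the paper's (constancy of $\widetilde G$ in $y$ makes the norm bounds trivial, and \eqref{Eq_Gtilde_initial} reduces to $\frac{2\sqrt t}{\sqrt\pi}\to 0$). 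The one genuine divergence is condition (ii), the approximate-identity property \eqref{Eq_Initial_G}. The paper avoids Fresnel asymptotics altogether by writing the kernel as an exact $y$-derivative,
\begin{equation*}
G(t,x,y)=\frac12\,\frac{d}{dy}\erf\Big(\frac{y-x}{2\sqrt{it}}\Big),
\end{equation*}
integrating by parts to move the derivative onto $\varphi$, and then passing to the limit with the pointwise identity $\lim_{t\to0^+}\erf\big(\frac{y-x}{2\sqrt{it}}\big)=\sgn(y-x)$; the boundary evaluations then assemble into $\varphi(x)$ with no delicate estimate. You instead rescale $y=x+2\sqrt t\,u$ and compare with the normalised Fresnel integral $\frac{1}{\sqrt{i\pi}}\int_{\mathbb{R}} e^{iu^2}du=1$. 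This route is viable but the remainder term is the crux, exactly as you flag: the naive bound $|\varphi(x+2\sqrt t\,u)-\varphi(x)|\le 2L\sqrt t\,|u|$ (with $L=\sup|\varphi'|$), integrated over $|u|\lesssim t^{-1/2}$, gives $O(t^{-1/2})$, which diverges, so oscillation must be exploited. To close it one can split at $|u|=1$ (absolute bound $O(\sqrt t)$ inside) and integrate by parts against $e^{iu^2}=\frac{1}{2iu}\frac{d}{du}e^{iu^2}$ outside; the boundary terms at $|u|\sim t^{-1/2}$ are $O(\sqrt t)$, and using the linear bound above in the $u^{-2}$ term gives $O(\sqrt t\,\log(1/t))\to 0$. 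So your argument is completable for $\varphi\in C^1$, at the cost of this extra estimate, which the paper's error-function device sidesteps entirely; conversely, your scaling argument is more robust, in that it does not require knowing a closed-form antiderivative of the kernel.
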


\begin{proof}
The result follows from Theorem~\ref{thm_Psi}, if we verify that the Green's function
\begin{equation*}
G(t,x,y)=\frac{1}{2\sqrt{i\pi t}}e^{\frac{i(y-x)^2}{4t}},\qquad t\in(0,\infty),\,x,y\in\mathbb{R},
\end{equation*}
satisfies Assumption~\ref{ass_Greensfunction} for $\alpha=0$ and every $n\geq 4$. It is easy to see that for every $y\in\mathbb{R}$ the function $G(\,\cdot\,,\,\cdot\,,y)\in\AC_{1,2}((0,\infty)\times\mathbb{R})$ is a solution of the time dependent Schrödinger equation
\begin{equation*}
i\frac{\partial}{\partial t}G(t,x,y)=-\frac{\partial^2}{\partial x^2}G(t,x,y),\qquad t\in(0,\infty),\,x\in\mathbb{R}.
\end{equation*}
In order to verify the initial condition \eqref{Eq_Initial_G}, we choose $x_0>0$ and $\varphi\in C^1([-x_0,x_0])$. Then for every $x\in(-x_0,x_0)$, we use the error function \eqref{erfi} to write
\begin{align*}
\int_{-x_0}^{x_0}G(t,x,y)\varphi(y)dy&=\frac{1}{2\sqrt{i\pi t}}\int_{-x_0}^{x_0}e^{\frac{i(y-x)^2}{4t}}\varphi(y)dy\\
&=\frac{1}{2}\int_{-x_0}^{x_0}\frac{d}{dy}\erf\Big(\frac{y-x}{2\sqrt{it}}\Big)\varphi(y)dy \\
&=\frac{1}{2}\erf\Big(\frac{y-x}{2\sqrt{it}}\Big)\varphi(y)\Big|_{y=-x_0}^{x_0}-\frac{1}{2}\int_{-x_0}^{x_0}\erf\Big(\frac{y-x}{2\sqrt{it}}\Big)\varphi'(y)dy.
\end{align*}
Using the limit $\lim_{t\to 0^+}\erf\big(\frac{y-x}{2\sqrt{it}}\big)=\sgn(y-x)$, we can now compute the initial condition
\begin{align*}
\lim\limits_{t\to 0^+}\int_{-x_0}^{x_0}G(t,x,y)\varphi(y)dy&=\frac{1}{2}\sgn(y-x)\varphi(y)\Big|_{y=-x_0}^{x_0}-\frac{1}{2}\int_{-x_0}^{x_0}\sgn(y-x)\varphi'(y)dy \\
&=\frac{1}{2}\sgn(y-x)\varphi(y)\Big|_{y=-x_0}^{x_0}+\frac{1}{2}\varphi(y)\Big|_{y=-x_0}^x-\frac{1}{2}\varphi(y)\Big|_{y=x}^{x_0}=\varphi(x),
\end{align*}
so that Assumption~\ref{ass_Greensfunction}~(ii) holds.
Next, the decomposition $G(t,x,y)=e^{ia(t)(y-x)^2}\widetilde{G}(t,x,y)$ in \eqref{Eq_G_decomposition} is satisfied with
\begin{equation*}
a(t)=\frac{1}{4t}\qquad\text{and}\qquad\widetilde{G}(t,x,y)=\frac{1}{2\sqrt{i\pi t}}.
\end{equation*}
Since $\widetilde{G}$ is independent of $y$, it is clear that $\widetilde{G}(t,x,\cdot\,)\in C^n(\mathbb{R},r^0)$, with norm given by
\begin{equation*}
\Vert\widetilde{G}(t,x,\cdot\,)\Vert_{C^n(\mathbb{R},r^0)}=\frac{1}{2\sqrt{\pi t}}.
\end{equation*}
In particular, this norm is locally bounded on $(0,\infty)\times\mathbb{R}$ and one has
\begin{equation*}
\lim\limits_{t\to 0^+}\frac{\Vert\widetilde{G}(t,x,\cdot\,)\Vert_{C^n(\mathbb{R},r^0)}}{a(t)}=\frac{2\sqrt{t}}{\sqrt{\pi}}=0,
\end{equation*}
that is, \eqref{Eq_Gtilde_initial} is satisfied. Moreover, the $t$- and $x$-derivatives of $\widetilde{G}$ are explicitly given by
\begin{equation*}
\widetilde{G}_x(t,x,y)=0,\qquad\widetilde{G}_{xx}(t,x,y)=0,\qquad\text{and}\qquad\widetilde{G}_t(t,x,y)=\frac{-1}{4\sqrt{i\pi}\,t^{\frac{3}{2}}}.
\end{equation*}
Clearly, $\widetilde{G}_x(t,x,\cdot\,)\in C^n(\mathbb{R},r^1)$ and $\widetilde{G}_{xx}(t,x,\cdot\,),\widetilde{G}_t(t,x,\cdot\,)\in C^n(\mathbb{R},r^2)$, with norms given by
\begin{equation*}
\Vert\widetilde{G}_x(t,x,\cdot\,)\Vert_{C^n(\mathbb{R},r^1)}=0,\quad\Vert\widetilde{G}_{xx}(t,x,\cdot\,)\Vert_{C^n(\mathbb{R},r^2)}=0,\quad\text{and}\quad\Vert\widetilde{G}_t(t,x,\cdot\,)\Vert_{C^n(\mathbb{R},r^2)}=\frac{1}{4\sqrt{\pi}\,t^{\frac{3}{2}}},
\end{equation*}
and hence all three norms are locally bounded on $(0,\infty)\times\mathbb{R}$. Therefore, also Assumption~\ref{ass_Greensfunction}~(iii) is satisfied and the assertions follow from Theorem~\ref{thm_Psi}.
\end{proof}

Next we compute the wave function \eqref{Eq_Psi_free} of the free particle with a plane wave initial condition $F(y)=e^{i\kappa y}$ of frequency $\kappa\in\mathbb{R}$.

\begin{exam}\label{exam_Plane_wave_initial_conditions}
Plugging the initial condition $F(y)=e^{i\kappa y}$ into the oscillatory integral \eqref{Eq_Psi_free} gives
\begin{align*}
\Psi(t,x;e^{i\kappa y})&=\frac{1}{2\sqrt{i\pi t}}\lim\limits_{\varepsilon\to 0^+}\int_\mathbb{R}e^{-\varepsilon y^2}e^{\frac{i(y-x)^2}{4t}}e^{i\kappa y}dy \\
&=\frac{e^{\frac{ix^2}{4t}}}{2\sqrt{i\pi t}}\lim\limits_{\varepsilon\to 0^+}\int_\mathbb{R}e^{-(\varepsilon-\frac{i}{4t})y^2+i(\kappa-\frac{x}{2t})y}dy.
\end{align*}
In order to solve this integral for fixed $\varepsilon>0$ we use the explicit value of the absolutely convergent Gauss integral \cite[Equation 7.4.2]{AS72}
\begin{equation*}
\int_\mathbb{R}e^{-\lambda y^2+\mu y}dy=\frac{\sqrt{\pi}}{\sqrt{\lambda}}e^{\frac{\mu^2}{4\lambda}},\qquad\lambda,\mu\in\mathbb{C}\text{ with }\Re(\lambda)>0.
\end{equation*}
Indeed, choosing $\lambda=\varepsilon-\frac{i}{4t}$ and $\mu=i(\kappa-\frac{x}{2t})$ shows that the wave function has the following explicit form
\begin{equation*}
\Psi(t,x;e^{i\kappa y})=\frac{e^{\frac{ix^2}{4t}}}{2\sqrt{i\pi t}}\lim\limits_{\varepsilon\to 0^+}\frac{\sqrt{\pi}}{\sqrt{\varepsilon-\frac{i}{4t}}}e^{\frac{-(\kappa-\frac{x}{2t})^2}{4(\varepsilon-\frac{i}{4t})}}=e^{i\kappa x-i\kappa^2t}.
\end{equation*}
\end{exam}

Now we turn to the moments of the free particle Green's function; here we also refer the reader to \cite{PW22}, where the moments of the Green's function also appear in the context of the time evolution of superoscillations.

\begin{exam}\label{exam_Moments}
As initial conditions we now consider the monomials $F(y)=y^m$. This leads to the moments
\begin{equation*}
g_m(t,x):=\Psi(t,x;y^m)=\frac{1}{2\sqrt{i\pi t}}\lim\limits_{\varepsilon\to 0^+}\int_\mathbb{R}e^{-\varepsilon y^2}e^{\frac{i(y-x)^2}{4t}}y^mdy,
\end{equation*}
of the Green's function. Substituting $y\to y+\frac{x}{1+4i\varepsilon t}$ in this integral gives
\begin{align*}
g_m(t,x)&=\frac{1}{2\sqrt{i\pi t}}\lim\limits_{\varepsilon\to 0^+}e^{-\frac{\varepsilon x^2}{1+4i\varepsilon t}}\int_\mathbb{R}e^{-(\varepsilon-\frac{i}{4t})y^2}\Big(y+\frac{x}{1+4i\varepsilon t}\Big)^mdy \\
&=\frac{1}{2\sqrt{i\pi t}}\lim\limits_{\varepsilon\to 0^+}e^{-\frac{\varepsilon x^2}{1+4i\varepsilon t}}\sum\limits_{k=0}^m{m\choose k}\Big(\frac{x}{1+4i\varepsilon t}\Big)^{m-k}\int_\mathbb{R}e^{-(\varepsilon-\frac{i}{4t})y^2}y^kdy \\
&=\frac{1}{2\sqrt{i\pi t}}\sum\limits_{k=0}^m{m\choose k}x^{m-k}\lim\limits_{\varepsilon\to 0^+}\int_\mathbb{R}e^{-(\varepsilon-\frac{i}{4t})y^2}y^kdy.
\end{align*}
For the remaining integral we will now use the moments of the Gaussian
\begin{equation*}
\int_\mathbb{R}e^{-\lambda y^2}y^kdy=\begin{cases} 0, & \text{if }k\text{ is odd}, \\ \frac{k!\sqrt{\pi}}{(\frac{k}{2})!(4\lambda)^{\frac{k}{2}}\sqrt{\lambda}}, & \text{if }k\text{ is even}, \end{cases}\qquad\Re(\lambda)>0,
\end{equation*}
where for odd $k$ the integral vanishes due to the antisymmetric integrand and for even $k$ the value is given in \cite[Equation 7.4.4]{AS72}. Hence we get
\begin{align*}
g_m(t,x)&=\frac{1}{2\sqrt{i\pi t}}\sum\limits_{k=0,k\,{\rm even}}^m{m\choose k}x^{m-k}\lim\limits_{\varepsilon\to 0^+}\frac{k!\sqrt{\pi}}{(\frac{k}{2})!2^k(\varepsilon-\frac{i}{4t})^{\frac{k}{2}}\sqrt{\varepsilon-\frac{i}{4t}}} \\
&=m!\sum\limits_{k=0,k\,{\rm even}}^m\frac{x^{m-k}(it)^{\frac{k}{2}}}{(\frac{k}{2})!(m-k)!}\\
&=m!\sum\limits_{k=0}^{\lfloor\frac{m}{2}\rfloor}\frac{x^{m-2k}(it)^k}{k!(m-2k)!}.
\end{align*}
\end{exam}

\end{document}